\newtheorem{definition}{Definition}[section]
\newtheorem{theorem}[definition]{Theorem}
\newtheorem{lemma}[definition]{Lemma}
\newtheorem{proposition}[definition]{Proposition}
\newtheorem{claim}[definition]{Claim}
\newtheorem{question}[definition]{Question}
\newtheorem{corollary}[definition]{Corollary}
\newtheorem{remark}[definition]{Remark}
\def \mH {\mathcal{H}}
\def \ex {\mathrm{ex}}
\def \sm {\setminus}
\def \ce {\coloneqq}
\def \pa {\bigg\lfloor \frac{n}{3} \bigg\rfloor}
\def \pb {\left\lfloor \frac{n+1}{3} \right\rfloor}
\def \pc {\left\lfloor \frac{n+2}{3} \right\rfloor}
\def \tri {\triangle}
\def \F {\mathcal{F}}
\def \ffp {F_5^{t}}
\def \ftp {F_2^{t}}
\def \S {\mathcal{S}}
\def \U {\mathcal{U}}
\def \supforOne {\circ}
\def \supforTwo {\tri}
\newcommand \fHo[1][n] {\S_t^\supforOne(#1)}
\newcommand \Ho[1][n] {S_t^\supforOne(#1)}
\newcommand \sHo[1][n] {s_t^\supforOne(#1)}
\newcommand \fHop[1][n] {\S_{t,\pi}^\supforOne(#1)}
\newcommand \Hop[1][n] {S_{t,\pi}^\supforOne(#1)}
\newcommand \Htp[1][n] {S_{t,\varpi}^\supforTwo(#1)}
\newcommand \sHop[1][n] {s_{t,\pi}^\supforOne(#1)}
\newcommand \sHtp[1][n] {s_{t,\varpi}^\supforTwo(#1)}
\renewcommand{\le}{\leqslant}
\renewcommand{\ge}{\geqslant}
\renewcommand{\leq}{\leqslant}
\renewcommand{\geq}{\geqslant}
\def \eps {\varepsilon}
\def \es {\varnothing}
\renewcommand \b[2] {\binom{#1}{#2}}
\def \bdm {\begin{displaymath}}
\def \edm {\end{displaymath}}
\def\rotateclockwise#1{
	\newdimen\xrw
	\pgfextractx{\xrw}{#1}
	\newdimen\yrw
	\pgfextracty{\yrw}{#1}
	\pgfpoint{\yrw}{-\xrw}
}
\def\rotatecounterclockwise#1{
	\newdimen\xrcw
	\pgfextractx{\xrcw}{#1}
	\newdimen\yrcw
	\pgfextracty{\yrcw}{#1}
	\pgfpoint{-\yrcw}{\xrcw}
}
\def\outsidespacerpgfclockwise#1#2#3{
	\pgfpointscale{#3}{
		\rotateclockwise{
			\pgfpointnormalised{
				\pgfpointdiff{#1}{#2}}}}
}
\def\outsidespacerpgfcounterclockwise#1#2#3{
	\pgfpointscale{#3}{
		\rotatecounterclockwise{
			\pgfpointnormalised{
				\pgfpointdiff{#1}{#2}}}}
}
\def\outsidepgfclockwise#1#2#3{
	\pgfpointadd{#2}{\outsidespacerpgfclockwise{#1}{#2}{#3}}
}
\def\outsidepgfcounterclockwise#1#2#3{
	\pgfpointadd{#2}{\outsidespacerpgfcounterclockwise{#1}{#2}{#3}}
}
\def\outside#1#2#3{
	($ (#2) ! #3 ! -90 : (#1) $)
}
\def\cornerpgf#1#2#3#4{
	\pgfextra{
		\pgfmathanglebetweenpoints{#2}{\outsidepgfcounterclockwise{#1}{#2}{#4}}
		\let\anglea\pgfmathresult
		\let\startangle\pgfmathresult
		
		\pgfmathanglebetweenpoints{#2}{\outsidepgfclockwise{#3}{#2}{#4}}
		\pgfmathparse{\pgfmathresult - \anglea}
		\pgfmathroundto{\pgfmathresult}
		\let\arcangle\pgfmathresult
		\ifthenelse{180=\arcangle \or 180<\arcangle}{
			\pgfmathparse{-360 + \arcangle}}{
			\pgfmathparse{\arcangle}}
		\let\deltaangle\pgfmathresult
		
		\newdimen\x
		\pgfextractx{\x}{\outsidepgfcounterclockwise{#1}{#2}{#4}}
		\newdimen\y
		\pgfextracty{\y}{\outsidepgfcounterclockwise{#1}{#2}{#4}}
	}
	-- (\x,\y) arc [start angle=\startangle, delta angle=\deltaangle, radius=#4]
}
\def\corner#1#2#3#4{
	\cornerpgf{\pgfpointanchor{#1}{center}}{\pgfpointanchor{#2}{center}}{\pgfpointanchor{#3}{center}}{#4}
}
\def\hedgeiii#1#2#3#4{
	\outside{#1}{#2}{#4} \corner{#1}{#2}{#3}{#4} \corner{#2}{#3}{#1}{#4} \corner{#3}{#1}{#2}{#4} -- cycle
}
\def\hedgem#1#2#3#4{
	\outside{#1}{#2}{#4}
	\pgfextra{
		\def\hgnodea{#1}
		\def\hgnodeb{#2}
	}
	foreach \c in {#3} {
		\corner{\hgnodea}{\hgnodeb}{\c}{#4}
		\pgfextra{
			\global\let\hgnodea\hgnodeb
			\global\let\hgnodeb\c
		}
	}
	\corner{\hgnodea}{\hgnodeb}{#1}{#4}
	\corner{\hgnodeb}{#1}{#2}{#4}
	-- cycle
}
\def\hgrotate#1{
	\newdimen\x
	\pgfextractx{\x}{#1}
	\newdimen\y
	\pgfextracty{\y}{#1}
	\pgfpoint{-\y}{\x}
}
\def\hgperpr#1#2#3{
	\pgfpointscale{#3}{
		\hgrotate{
			\pgfpointnormalised{
				\pgfpointdiff{#1}{#2}}}}
}
\def\hgaddeperpr#1#2#3{
	\pgfpointadd{#2}{\hgperpr{#1}{#2}{#3}}
}
\def\hgaddsperpr#1#2#3{
	\pgfpointadd{\hgperpr{#1}{#2}{#3}}{#1}
}
\def\hgcorner#1#2#3#4{
	\pgflineto{\hgaddeperpr{#1}{#2}{#4}}
	\pgfmathanglebetweenpoints{#1}{#2}\let\anga\pgfmathresult
	\pgfpatharcto{#4}{#4}{90 + \anga}{0}{0}{\hgaddsperpr{#2}{#3}{#4}}
}
\title{Non-degenerate Hypergraphs with Exponentially \\Many Extremal Constructions}
\author{
J\'ozsef Balogh\thanks{Department of Mathematics, University of Illinois at Urbana-Champaign, Urbana, Illinois 61801, USA. E-mail: \texttt{jobal@illinois.edu}. Research is partially supported by NSF Grant DMS-1764123 and RTG DMS-1937241, Arnold O. Beckman Research Award (UIUC Campus Research Board RB 22000), the Langan Scholar Fund (UIUC), and the Simons Fellowship.}
\and 
Felix Christian Clemen\thanks{Department of Mathematics, Karlsruhe Institute of Technology, 76131 Karlsruhe, Germany, Email: \texttt{felix.clemen@kit.edu}. Research was partially performed while the second author was at the University of Illinois at Urbana-Champaign.}
\and 
Haoran Luo\thanks{Department of Mathematics, University of Illinois at Urbana-Champaign, Urbana, Illinois 61801, USA. E-mail: \texttt{haoranl8@illinois.edu}. Research is partially supported by UIUC Campus Research Board RB 22000.}
}
\date{}
\begin{document}
\maketitle{}

\begin{abstract}
For every integer $t \ge 0$, denote by $\ffp$ the hypergraph on vertex set $\{1,2,\ldots, 5+t\}$ with hyperedges $\{123,124\} \cup \{34k : 5 \le k \le 5+t\}$. We determine $\ex(n,\ffp)$ for every $t\ge 0$ and sufficiently large $n$ and characterize the extremal $\ffp$-free hypergraphs. In parti\-cular, if $n$ satisfies certain divisibility conditions, then the extremal $\ffp$-free hypergraphs are exactly the balanced complete tripartite hypergraphs with additional hyperedges inside each of the three parts $(V_1,V_2,V_3)$ in the partition; each part $V_i$ spans a $(|V_i|,3,2,t)$-design. This gene\-ralizes earlier work of Frankl and F\"uredi on the Tur\'an number of $F_5:=F_5^0$.

Our results extend a theory of Erd\H os and Simonovits about the extremal constructions for certain fixed graphs.
In particular, the hypergraphs $F_5^{6t}$, for $t\geq 1$,
are the first examples of hypergraphs with exponentially many extremal constructions and positive Tur\'an density. 
\end{abstract}

\section{Introduction} \label{sec::Int}
For a family of $r$-uniform hypergraphs (\emph{$r$-graphs}) $\F$, a hypergraph $H$ is \emph{$\F$-free} if $H$ contains no copy of any $F \in \F$ as a subhypergraph. The \emph{Tur\'an number} $\ex(n,\F)$ is the maxi\-mum possible number of hyperedges in an $n$-vertex $\F$-free $r$-graph. Let $\pi(\F) \ce \lim_{n\to \infty} \ex(n,\F) / \b{n}{r}$ be the \emph{Tur\'an density} of $\F$. 
A family $\F$ is \emph{non-degenerate} if $\pi(\F) >0$.
When $\F = \{F\}$, we will simply use $F$-free, $\ex(n,F)$, and $\pi(F)$, respectively. 
We say an $n$-vertex $\F$-free $r$-graph $H$ is \emph{extremal} (or an \emph{extremal construction}) for $\F$ if $H$ has $\ex(n, \F)$ hyperedges.  
Determining Tur\'an numbers and the corresponding extremal constructions is one of the central problems in Extremal Combinatorics and has received extensive attention. 

This problem is reasonably well-understood for graphs (the case where $r=2$). The Erd\H os-Stone theorem~\cite{erdos1946structure}, as pointed out by Erd\H os and Simonovits~\cite{erdHos1965limit}, determines the Tur\'an density of all graphs. However, for hypergraphs, the Tur\'an problem is notoriously difficult, and there are only very few exact results. 
The \emph{tetrahedron}, denoted by $K_4^3$, is the complete $3$-graph on four vertices.
It is a famous problem of Tur\'an~\cite{turan1941extremalaufgabe} to determine the Tur\'an density of the tetrahedron. There are exponentially many conjectured extremal constructions by Brown~\cite{K43brown}, Kostochka~\cite{kostochka1982class}, Fon-der-Flaass~\cite{K43Fonderflaass}, and Frohmader~\cite{MR2465761}, see also~\cite{MR2962945} by Razborov.

In this paper, we give the first examples of hypergraphs which share this (conjectured) property with the tetrahedron in the following sense: each of them has exponentially many non-isomorphic extremal constructions and positive Tur\'an density. 
We note that our ext\-remal constructions are close (i.e. $O(n^2)$) to each other in edit-distance, while for the tetrahedron, the (conjectured) extremal constructions differ more from each other (some of them have edit-distance $\Omega(n^3)$).
Following Liu, Mubayi~\cite{LiMu} and Liu, Mubayi, Reiher~\cite{LiMuRe}, Hou, Li, Liu, Mubayi, and Zhang~\cite{hou2022hypergraphs} recently constructed finite families of $3$-graphs such that each family has $\Omega(n)$ non-isomorphic extremal constructions far (i.e. $\Omega(n^3)$) from each other in edit-distance, which is a new phenomenon compared to graphs. 
It remains open to find a single hypergraph with $\omega(1)$ extremal hypergraphs far from each other in edit-distance. We reiterate that the tetrahedron is conjectured to be such a hypergraph.

A classical theorem in hypergraph Tur\'an theory is the following result of Bollob{\'a}s~\cite{bollobas1974three}. The \emph{generalized triangle}, denoted by $F_5$, is the $3$-graph on vertex set $\{1,2,3,4,5\}$ with hyperedges $\{123,124,345\}$, see Figure~\ref{fig::F5}. Let $K_4^-$ be the $3$-graph on vertex set $\{1,2,3,4\}$ with hyperedges $\{123,124,134\}$, and denote by $S(n)$ the complete tripartite $3$-graph on vertex set $[n] \ce \{1,2,\ldots,n\}$ with parts of sizes as equal as possible, see Figure~\ref{fig::Sn}. The number of hyperedges in $S(n)$ is $s(n) \ce \left\lfloor n/3 \right\rfloor \cdot \left\lfloor (n+1)/3 \right\rfloor \cdot \left\lfloor (n+2)/3 \right\rfloor$. Bollob{\'a}s~\cite{bollobas1974three} proved that $\ex(n,\{K_4^{-}, F_5\}) = s(n)$. This result was extended by Frankl and F{\"u}redi~\cite{frankl1983new}, who proved that $\ex(n,F_5) = s(n)$ for $n \ge 3000$, and it was improved again to $n\geq 33$ by Keevash and Mubayi~\cite{keevash2004stability}. 

\begin{figure}[h!]
\tikzstyle{every node}=[circle, draw, fill=black!80, inner sep=0pt, minimum width=2.4pt]
	\begin{minipage}{0.48\textwidth}
	\centering
	\begin{tikzpicture}[scale = 1.4]
		\draw[color=white, opacity = .0, use as bounding box] (0,-1.3) rectangle (2,1.5);
		\node (1) at (0, 0.8)  [label=right:$1$] {};  
		\node (2) at (1, 0.8)  [label=right:$2$] {};
		\node (3) at (2, 0.8)  [label=right:$3$] {};
		\node (4) at (2, 0)  [label=right:$4$] {};
		\node (5) at (2, -0.8) [label=right:$5$] {};
		
		\begin{pgfonlayer}{bg}
            \draw[thick] \hedgeiii{1}{2}{3}{3mm};
		    \draw[thick] \hedgeiii{1}{2}{4}{2.6mm};
		    \draw[thick] \hedgeiii{3}{4}{5}{3mm};
        \end{pgfonlayer}
    \end{tikzpicture}
    \caption{Hypergraph $F_5$.} \label{fig::F5}
    \end{minipage}
	\begin{minipage}{0.48\textwidth}
	\centering
	\begin{tikzpicture}[scale = 1.4]
	    \draw[color=white, opacity = .0, use as bounding box] (0,-1.3) rectangle (2,1.5);
	
	    \node (x) at (1, 0.7) [label=right:] {};  
	    \node (y) at (0.25, -0.6) [label=below:] {};
	    \node (z) at (1.75, -0.6) [label=below:] {};
	    
	    \node at (1.9, 0.8) [fill=black!0,draw=black!0] {$V_1$};
	    \node at (-0.2, 0.2) [fill=black!0,draw=black!0] {$V_2$};
	    \node at (2.2, 0.2) [fill=black!0,draw=black!0] {$V_3$};
	
		\draw [very thick] (1,0.8) ellipse (0.7cm and 0.5cm);
		\draw [very thick] (0,-0.5) ellipse (0.7cm and 0.5cm);
		\draw [very thick] (2,-0.5) ellipse (0.7cm and 0.5cm);
        
        \draw[thick] (x) -- (y) -- (z) -- (x);
		
    \end{tikzpicture}
    \caption{Hypergraph $S(n)$.} \label{fig::Sn}
    \end{minipage}
\end{figure}

For every integer $t \ge 0$, denote by $\ffp$ the hypergraph on vertex set $\{1,2,\ldots, 5+t\}$ with hyperedges $\{123,124\} \cup \{34k : 5 \le k \le 5+t\}$, see Figure~\ref{fig::ffp}. Hence $F_5^0=F_5$. Furthermore, denote by $\ftp$ the hypergraph on vertex set $\{1,2,\ldots, 3+t\}$ with hyperedges $\{12k: 3\le k \le 3+t\}$. 
Note that $\ftp$-free hypergraphs are exactly the hypergraphs with maximum codegree at most $t$, i.e., every pair of vertices is in at most $t$ hyperedges.
Let $\mH(n,t)$ be the family of extremal $n$-vertex $\ftp$-free $3$-graphs $H(n,t)$.
Define $\fHo$ as the family of $3$-graphs $\Ho$ on vertex set $[n]$, where $\Ho$ is obtained from $S(n)$ by adding a copy of $H(|V_i|,t) \in \mH(|V_i|, t)$ to every part $V_i$ in the partition of $S(n)$, see Figure~\ref{fig::Ho}. Let
\bdm
\sHo \ce \pa \pb \pc 
+ \ex\left(\pa,\ftp\right) + \ex\left(\pb,\ftp\right) + \ex\left(\pc,\ftp \right)
\edm
be the number of hyperedges in every $\Ho \in \fHo$. Our main result is the following theorem.

\begin{theorem} \label{thm::Main}
For every integer $t \ge 0$ and sufficiently large $n$, the $\ffp$-free $3$-graphs on vertex set $[n]$ with maximum number of hyperedges are exactly the hypergraphs $\Ho \in \fHo$, and hence $\ex(n, \ffp)=\sHo$.
\end{theorem}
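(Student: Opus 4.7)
I would first verify that every $\Ho \in \fHo$ is $\ffp$-free, which establishes $\ex(n,\ffp) \ge \sHo$. Suppose toward a contradiction that a copy of $\ffp$ sits in $\Ho$ with vertices labelled $1,\ldots,5+t$ matching the definition. The argument splits on the location of the pair $\{1,2\}$ in the tripartition $(V_1,V_2,V_3)$ underlying $\Ho$. If $1$ and $2$ lie in distinct parts $V_i, V_j$, then the crossing edges $\{1,2,3\},\{1,2,4\}$ force $3,4 \in V_k$, and the $t+1$ further edges $\{3,4,k\}$ with $5\le k\le 5+t$ must all be interior to $V_k$ since mixed triples are absent from $\Ho$; hence the codegree of $\{3,4\}$ inside $H(|V_k|,t)$ is at least $t+1$, contradicting $\ftp$-freeness. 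If instead $\{1,2\}\subseteq V_i$, then all edges through $\{1,2\}$ and $\{3,4\}$ must be interior to $V_i$, producing the same codegree violation inside $H(|V_i|,t)$.

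\textbf{Upper bound -- stability.} The matching upper bound is the heart of the proof. The plan is a stability-plus-exact argument extending Frankl--F\"uredi and Keevash--Mubayi from $F_5$ to $\ffp$. First I would establish a stability statement: any $\ffp$-free $n$-vertex $3$-graph $H$ with $e(H)\ge \sHo -\alpha n^3$ admits a tripartition $V_1\cup V_2\cup V_3$ of $V(H)$ with only $o_\alpha(n^3)$ non-crossing hyperedges. Since $\sHo=s(n)+O(n^2)$, this places $H$ in the regime of $F_5$-stability. Concretely, call a pair \emph{heavy} if its codegree in $H$ exceeds $t+2$; the $\ffp$-free property forces strong structural constraints on the link of every heavy pair (no disjoint pair $\{a,b\}$ has both $abc$ and $abd$ in $H$ for any heavy $\{c,d\}$), and deleting the $O(n^2)$ hyperedges through heavy pairs yields a sub-hypergraph to which a Keevash--Mubayi-style stability argument can be applied, producing the tripartition.

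\textbf{Upper bound -- exact structure.} Given the tripartition, I would run a cleaning and symmetrization phase: (i) vertex-moving to make the partition locally optimal; (ii) adding back any missing crossing triple, checking that no $\ffp$ is created, so that $S(n)\subseteq H$ in the extremal case; (iii) verifying that each interior $H[V_i]$ is $\ftp$-free, since two interior edges $xyz,xyw\in V_i$ together with $t+1$ further interior edges through $\{z,w\}$ would directly embed $\ffp$ inside $V_i$; and (iv) eliminating ``mixed'' hyperedges (two vertices in one part, the third in another) by an exchange argument in which each mixed edge is swapped for a crossing or interior edge at a net gain. Finally, the product $\pa\pb\pc$ is strictly maximized at the most balanced tripartition, and the additive interior contributions $\ex(|V_i|,\ftp)=O(n^2)$ cannot shift this optimum, so both $\ex(n,\ffp)=\sHo$ and the identification of the extremal family $\fHo$ follow.

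\textbf{Main obstacle.} The principal difficulty lies at the interface of the stability step and item (iv) above. The ``mixed'' hyperedges with two vertices in one part and the third in another are forbidden neither by $\ffp$-freeness nor by stability alone, yet they have no place in the extremal template. Ruling them out for the exact statement requires a delicate codegree and link analysis around each offending vertex, and a careful exchange argument preserving $\ffp$-freeness while strictly increasing the edge count. A second subtle point is that the needed stability is really an $\ffp$-stability with respect to $\sHo$ rather than the classical $F_5$-stability with respect to $s(n)$: the $O(n^2)$ interior slack must be carried through all the reductions, and I expect these two issues to absorb the bulk of the technical effort.
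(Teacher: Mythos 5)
Your lower-bound verification that every $\Ho \in \fHo$ is $\ffp$-free is correct, and your overall template (stability followed by a cleaning/exactness phase) matches the paper's in spirit. However, there are two substantive gaps.

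\textbf{The stability step as described does not work.} You propose to delete ``the $O(n^2)$ hyperedges through heavy pairs'' (pairs of codegree $> t+2$) and then apply a Keevash--Mubayi-style argument. But there is no $O(n^2)$ bound on the number of hyperedges through heavy pairs in an $\ffp$-free hypergraph: in the extremal construction $\Ho$ itself, every crossing pair $\{x,y\}$ with $x\in V_i$, $y\in V_j$ ($i\neq j$) has codegree roughly $|V_k|=\Theta(n)$, so there are $\Theta(n^2)$ heavy pairs and $\Theta(n^3)$ hyperedges through them. Your codegree constraint (Lemma~\ref{lem::Qn29edges} in the paper) only yields a small bound on heavy pairs \emph{inside} a part, but you cannot invoke that without already having the tripartition, which is exactly what stability is supposed to produce. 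Moreover, even if deleting heavy-pair edges were cheap, the resulting hypergraph need not be $F_5$-free, so it is unclear what ``Keevash--Mubayi-style'' statement you would apply. The paper's route is the standard one and avoids this: $\ffp\subseteq F_5[t+1]$, so an $\ffp$-free $H$ cannot contain $F_5[t+1]$, hence by the blow-up/supersaturation lemma (Lemma~\ref{lem::ManCopOfFThenBlo}) $H$ has $o(n^5)$ copies of $F_5$, hence by the hypergraph removal lemma one can delete $o(n^3)$ hyperedges to make $H$ actually $F_5$-free, and then the Keevash--Mubayi stability theorem for $F_5$ applies directly.

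\textbf{The cleaning step misses the paper's central obstruction.} Your ``main obstacle'' paragraph identifies mixed hyperedges and an $O(n^2)$ interior slack, but the genuine difficulty is sharper than that. There is a second family of almost-extremal $\ffp$-free $3$-graphs $\Htp$ (Section~\ref{subsec::AnAlo}): take a complete tripartite hypergraph on $n-t$ vertices, and add $t$ special vertices each of whose link inside every part is a complete graph. This construction has $\Theta(n^2)$ mixed hyperedges yet falls short of $\sHo$ by only $\Theta(n)$ (Lemma~\ref{lem::HoHt}). Any ``exchange argument, gaining edges by removing mixed ones'' therefore has to be accurate to within an additive $O(n)$ out of $\Theta(n^2)$ mixed edges; a naive local exchange will not distinguish $\Htp$ from $\Ho$. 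The paper handles this by classifying vertices into typical ($A$) and non-typical ($B$), proving $|B|\le t$ from the $O(n^2)$ bound on $|\bar{H}_\pi|$, and then a separate counting argument (Claim~\ref{cla::Bt}, using Lemma~\ref{lem::HoHt}) to rule out $|B|=t$ precisely because that regime is $\Htp$-like. Your proposal does not engage with this near-extremal family at all, and without something playing the role of Lemma~\ref{lem::HoHt} the exact result cannot be recovered. A smaller omission: the exact phase needs a minimum-degree reduction (Section~\ref{subsec::proMai}) before one can run the typical/non-typical analysis; this is a standard step but it must be included.
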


Thus, we solve the Tur\'an problem for each hypergraph in this infinite family of forbidden hypergraphs for sufficiently large $n$ and characterize their extremal constructions. 
Our ext\-remal constructions $\fHo$ are similar in soul to the following result of Simonovits~\cite{simonovits1968method}.
For $r,d \ge 2$, let $Q(r,d)$ be the complete $(d+1)$-partite graph on $rd+1$ vertices with one part of size $1$ and $d$ parts of size $r$. 
Let $\U(n,r,d)$ be the family of graphs, each of which is obtained from the $n$-vertex complete $d$-partite graph with parts of sizes as equal as possible by adding maximum number of edges inside each part such that every vertex is adjacent to at most $r-1$ other vertices in the same part. 
Let $\U^*(n,r,d)$ be the family of graphs in $\U(n,r,d)$ with the extra requirement that there is no triangle inside any part.
Simonovits~\cite{simonovits1968method}, generalizing an unpublished result of Erd\H os, proved that all the graphs in $\U^*(n,r,d)$ are extremal constructions for $Q(r,d)$ and all the extremal constructions for $Q(r,d)$ are in $\U(n,r,d)$, for sufficiently large $n$.

For our proof of Theorem~\ref{thm::Main}, we derive a stability result, followed up by a cleaning method, which reveals the structure of the extremal $\ffp$-free hypergraphs step by step.
\begin{figure}[ht]
    \begin{minipage}[b]{0.48\textwidth}
	\centering
	\begin{tikzpicture}[scale = 1.42 ]
	\tikzstyle{every node}=[circle, draw, fill=black!80, inner sep=0pt, minimum width=2.1pt]
		\draw[color=white, opacity = .0, use as bounding box] (0,-1.3) rectangle (2,1.5);
		\node (1) at (0, 0.8)  [label=right:$1$] {};  
		\node (2) at (1, 0.8)  [label=right:$2$] {};
		\node (3) at (2, 0.8)  [label=right:$3$] {};
		\node (4) at (2, 0)  [label=right:$4$] {};
		\node (5) at (2, -0.8) [label=right:$5$] {};
		\node at (1.4, -0.8) [fill=black!0,draw=black!0] {$\ldots$};
		\node (5+t) at (0.1, -0.8) [label={[label distance=0.05cm]5:$5+t$}] {};
		
		\begin{pgfonlayer}{bg}
            \draw[thick] \hedgeiii{1}{2}{3}{3mm};
		    \draw[thick] \hedgeiii{1}{2}{4}{2.6mm};
		    \draw[thick] \hedgeiii{3}{4}{5}{3mm};
		    \draw[thick] \hedgeiii{3}{4}{5+t}{3mm};
        \end{pgfonlayer}
    \end{tikzpicture}
    \caption{Hypergraph $\ffp$.} \label{fig::ffp}
    \end{minipage}
	\begin{minipage}[b]{0.48\textwidth}
	\centering
	\begin{tikzpicture}[scale = 1.42 ]
	    \draw[color=white, opacity = .0, use as bounding box] (0,-1.3) rectangle (2,1.5);
	
	    \node at (1.9, 0.8) [fill=black!0,draw=black!0] {$V_1$};
	    \node at (-0.2, 0.2) [fill=black!0,draw=black!0] {$V_2$};
	    \node at (2.2, 0.2) [fill=black!0,draw=black!0] {$V_3$};
	    
	    \fill [gray!33] (1,0.8) ellipse (0.7cm and 0.5cm);
	    \fill [gray!33] (0,-0.5) ellipse (0.7cm and 0.5cm);
		\fill [gray!33] (2,-0.5) ellipse (0.7cm and 0.5cm);
		
		\draw [very thick] (1,0.8) ellipse (0.7cm and 0.5cm);
		\draw [very thick] (0,-0.5) ellipse (0.7cm and 0.5cm); 
		\draw [very thick] (2,-0.5) ellipse (0.7cm and 0.5cm); 
		
		\node at (1,0.9) {${\scriptstyle H\left(|V_1|,\,t\right)}$};
		\node at (-0.2,-0.45) {${\scriptstyle H\left(|V_2|,\,t\right)}$};
		\node at (2.2,-0.45) {${\scriptstyle H\left(|V_3|,\,t\right)}$};

		\tikzstyle{every node}=[circle, draw, fill=black!80, inner sep=0pt, minimum width=2.1pt]
	    \node (x) at (1, 0.7) {};  
	    \node (y) at (0.25, -0.6)  {};
	    \node (z) at (1.75, -0.6) {};
        
        \draw[thick] (x) -- (y) -- (z) -- (x);
    \end{tikzpicture}
    \caption{Hypergraph $\Ho$.} \label{fig::Ho}
    \end{minipage}
\end{figure}
One of the roadblocks in our proof is that there exists another family of almost extremal $\ffp$-free $3$-graphs, whose number of hyperedges is smaller than $\sHo$ only by $O(n)$.
Hence for the proof of Theorem~\ref{thm::Main}, we need to have a good understanding of $\ex(n,\ftp)$.
The value of $\ex(n,\ftp)$ is closely related to designs. An $(n,k,r,t)$\emph{-design} is a family $X$ of distinct $k$-subsets of an $n$-set $V$, such that every $r$-subset of $V$ belongs to exactly $t$ elements of $X$. Hence, if every pair of vertices in hypergraph $H(n,t) \in \mH(n,t)$ is in exactly $t$ hyperedges, then $H(n,t)$ is just an $(n,3,2,t)$-design and thus $H(n,t)$ has $\frac{t}{3}\binom{n}{2}$ hyperedges. Dehon~\cite{MR685624} proved the following necessary and sufficient condition for the existence of $(n,3,2,t)$-designs. 
\begin{theorem}[Dehon~\cite{MR685624}]
\label{designdehon}
Let $t$ and $n$ be integers such that $0\leq  t \leq n-2$. Then, there exists an $(n,3,2,t)$-design iff $t n (n-1)\equiv 0 \pmod 6$ and $t (n-1) \equiv 0 \pmod 2$.
\end{theorem}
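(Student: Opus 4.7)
The necessity direction is a routine double count. If $X$ is an $(n,3,2,t)$-design with $b$ blocks, summing over all pairs of $V$ gives $3b = t\binom{n}{2}$, so $6 \mid tn(n-1)$; and for any fixed vertex $v \in V$ the number of blocks through $v$ equals $t(n-1)/2$, so $2 \mid t(n-1)$. This also forces the bound $t \le n-2$ since no pair can lie in more than $n-2$ distinct triples.

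For sufficiency, the plan is to exhibit explicit \emph{simple} designs (no repeated blocks) for every admissible pair $(n,t)$, following the design-theoretic tradition of Kirkman, Hanani, and Dehon. First dispose of $t=0$ (empty design) and small $n$ by inspection. The core case is $t=1$, which is Kirkman's classical theorem on the existence of Steiner triple systems for $n \equiv 1,3 \pmod 6$, via Bose's and Skolem's constructions. For general admissible $t$ with $n \equiv 1,3 \pmod 6$, I would build the design by superposing $t$ pairwise block-disjoint Steiner triple systems on the same vertex set; the existence of such a collection for every $t$ up to $n-2$ is the \emph{large sets of Steiner triple systems} theorem of Lu Jiaxi and Teirlinck. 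For the remaining residues $n \equiv 0,2,4,5 \pmod 6$, the divisibility conditions force $t$ to be even, a multiple of $3$, or a multiple of $6$, respectively; for these I would combine smaller known simple designs via a recursive gluing step, choosing a group-divisible design on $V$ whose groups have sizes in an easier residue class and filling in each group with an inductively constructed simple $(|V_i|,3,2,t)$-design, arranged so that the cross-triples and the in-group triples together cover each pair exactly $t$ times.

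The main obstacle is enforcing \emph{simplicity}. Hanani's theorem already yields the multiset version with possibly repeated triples; what the bound $t \le n-2$ encodes is that simplicity is just barely permitted. In every recursive step one must therefore avoid repeating any block contributed by a previous layer. This is handled by working with designs possessing a rich automorphism group, for instance cyclic or affine designs coming from difference families over an abelian group, which furnish \emph{shifted} copies that are pairwise block-disjoint; and by choosing the group-divisible skeleton so that its transversal triples are structurally distinct from the triples contributed by the fillings. The inductive bookkeeping across all residue classes simultaneously and across every admissible value of $t$ is delicate but standard in the theory of triple systems, and it is this uniform bookkeeping, rather than any single construction, that makes the theorem hard to present in one shot.
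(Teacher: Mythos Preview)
The paper does not prove this theorem; it is quoted as a known result from Dehon~\cite{MR685624} and used only as a black box (in the Appendix, to build near-optimal $F_2^t$-free hypergraphs). So there is no proof in the paper to compare your proposal against.

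As for the proposal itself: the necessity argument is correct. The sufficiency sketch is broadly in the right spirit---recursive constructions with group-divisible designs are indeed the standard toolkit---but two points deserve care. First, invoking the Lu--Teirlinck large-sets theorem is anachronistic relative to Dehon's 1983 paper and is also overkill; Dehon's own argument predates it and proceeds by direct and recursive constructions tailored to the residue classes, without needing a full partition of $\binom{[n]}{3}$ into Steiner triple systems. Second, your plan for $n\equiv 1,3\pmod 6$ via $t$ pairwise disjoint Steiner triple systems breaks at $n=7$, where only two disjoint $\mathrm{STS}(7)$'s exist; the admissible values $t=3,4,5$ there must be obtained differently (e.g., by complementation in $\binom{[7]}{3}$). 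These are patchable, but the sketch as written is not a complete proof.
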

If $t$ is a multiple of $6$, then there always exists an $(n,3,2,t)$-design for sufficiently large $n$. Keevash proved that the number of non-isomorphic $(n,3,2,t)$-designs (see Theorem 6.1 in~\cite{keevash2018counting}) is $$
t!^{-\b{n}{2}} \left( \left(\frac{t}{e}\right)^2(n-2) + o(n-2)\right)^{\frac{t}{3} \b{n}{2}}=n^{\Omega(n^2)}.
$$ Thus, by Theorem~\ref{thm::Main}, there are exponentially many extremal constructions for $F_5^{6t}$ whenever $t \ge 1$.

Note that if $n = 6k+1$ or $n= 6k +3 $ for some integer $k \ge (t + 1)/6$, then there always exists an $(n,3,2,t)$-design. 
Thus, we have the following corollary of Theorem~\ref{thm::Main}.
\begin{corollary} 
For every $t \ge 0$, if $n$ is sufficiently large and $n\equiv 3$ or $9 \pmod {18}$, then 
$$
\ex(n,\ffp) = \frac{1}{27}n^3 + \frac{1}{18}tn^2 - \frac{1}{6}tn.
$$
\end{corollary}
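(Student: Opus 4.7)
The plan is to deduce the corollary directly from Theorem~\ref{thm::Main} by evaluating the explicit formula for $\sHo$ under the divisibility hypothesis. Since $n \equiv 3$ or $9 \pmod{18}$ both imply $n \equiv 3 \pmod{6}$, the integer $n$ is divisible by $3$, so all three floor quantities $\pa$, $\pb$, $\pc$ equal $n/3$. Consequently the complete tripartite contribution to $\sHo$ is exactly $(n/3)^3 = n^3/27$, and the three remaining Tur\'an-number terms collapse to $3 \cdot \ex(n/3,\ftp)$.

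To evaluate $\ex(m,\ftp)$ for $m = n/3$, I would exploit that $\ftp$-freeness is equivalent to the maximum codegree being at most $t$. A standard double-counting of pairs against edges then gives the upper bound $\ex(m,\ftp) \le t\binom{m}{2}/3$, with equality attained precisely when an $(m,3,2,t)$-design exists. The hypothesis $n \equiv 3, 9 \pmod{18}$ translates to $m \equiv 1, 3 \pmod{6}$; in either residue class it is a routine check that Dehon's congruences $tm(m-1) \equiv 0 \pmod{6}$ and $t(m-1) \equiv 0 \pmod{2}$ hold for every $t \ge 0$. Theorem~\ref{designdehon} then guarantees the design and hence $\ex(n/3,\ftp) = tm(m-1)/6 = tn(n-3)/54$ for all sufficiently large $n$ in these classes.

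Substituting back and simplifying yields
$$
\ex(n,\ffp) = \sHo = \frac{n^3}{27} + 3 \cdot \frac{tn(n-3)}{54} = \frac{n^3}{27} + \frac{tn^2}{18} - \frac{tn}{6},
$$
as claimed. This derivation is essentially a bookkeeping exercise once Theorem~\ref{thm::Main} and Theorem~\ref{designdehon} are in hand, so there is no substantive obstacle: the only care required is in verifying Dehon's two congruences in each of the two residue classes modulo $18$, and in noting that ``sufficiently large'' is inherited from Theorem~\ref{thm::Main} (which already subsumes any threshold needed for the existence of the relevant designs).
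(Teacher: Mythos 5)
Your proof is correct and takes essentially the same route as the paper: both reduce to evaluating $\sHo$ via Theorem~\ref{thm::Main}, observe that $3\mid n$ so all parts have size $m=n/3$ with $m\equiv 1$ or $3\pmod 6$, invoke Theorem~\ref{designdehon} to get $(m,3,2,t)$-designs and hence $\ex(m,\ftp)=\frac{t}{3}\binom{m}{2}$, and substitute. The only thing the paper states and you should keep in mind (though it's harmless here) is that Dehon's theorem also requires $t\le m-2$, which is absorbed into ``sufficiently large $n$.''
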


There does not exist an $(n,3,2,t)$-design for every $n$ and $t$, and therefore it is non-trivial to give a reasonably good lower bounds on $\ex(n,\ftp)$ in general. In Section~\ref{subsec::ressHo}, we derive a lower bound on $\ex(n,\ftp)$ which is good enough for our purpose.
We cannot give an explicit formula for $\ex(n,\ftp)$. Similar phenomena often occur in Extremal Combinatorics. An interesting classical example is by Erd\H os and Simonovits~\cite{erdHos1971extremal}. Denote by $K_{2,2,2}$ the complete tripartite graph with each part of size $2$. They~\cite{erdHos1971extremal} proved that for sufficiently large $n$, the extremal $K_{2,2,2}$-free graph is realized by the graphs whose vertex set can be partitioned into two subsets $A$ and $B$ such that $G[A]$ is $C_4$-free and $G[B]$ is $P_3$-free, where $C_4$ is the cycle with $4$ vertices and $P_3$ is the path with $3$ vertices. However, they did not determine the exact sizes of $A$ and $B$, as $\ex(n,C_4)$ is not known precisely for every $n$. Similarly to $\ex(n,C_4)$, in our case $\ex(n, \ftp)$ is not known precisely for every $n$. We are still able to confirm the above-mentioned unified description of the extremal $\ffp$-free hypergraphs and in particular, prove that the three parts in the partition have to be balanced, i.e., they have sizes $\left \lfloor n/3 \right \rfloor$, $\left\lfloor (n+1)/3 \right\rfloor$ and $\left\lfloor (n+2)/3 \right\rfloor$.

Our paper is organized as follows. In Section~\ref{sec::Pre}, we introduce our notation, give an estimate on $\ex(n,\ftp)$ and some useful results about $\Ho$, and deduce a stability result for $\ffp$. In Section~\ref{sec::Pro}, we prove Theorem~\ref{thm::Main}, our main result. We discuss some open problems in Section~\ref{sec::Con}.

\section{Preliminaries} \label{sec::Pre}

\subsection{Notation} \label{subsec::Not}
For a positive integer $m$, we use $[m]$ for the set $\{1,2,\ldots, m\}$. We write $xy$ for the set $\{x,y\}$ and $xyz$ for the set $\{x,y,z\}$. For a set $S$ and a non-negative integer $k$, we use $\b{S}{k}$ to denote the family of all $k$-subsets of $S$. For an $n$-vertex $r$-graph $H$, we will assume its vertex set $V(H)$ is $[n]$. We often use $H$ for the hyperedge set of $H$ and denote by $|H|$ the number of hyperedges in $H$. Let $H$ be an $n$-vertex $3$-graph. For vertices $x,y\in [n]$ and (not necessarily disjoint) subsets of vertices $S,T\subseteq [n]$, let
\begin{itemize}
    \item $L_{S,T}(x) \ce \{ yz : y\in S,\,z\in T,\,xyz\in H\}$ be the \emph{link graph} of $x$ between $S$ and $T$,
    \item $d_{S,T}(x) \ce |L_{S,T}(x)|$ be the \emph{degree} of $x$ between $S$ and $T$,
    \item $L_{S}(x,y) \ce \{z : z\in S,\, xyz \in H\}$ be the set of \emph{neighbors} of $x$ and $y$ in $S$, and
    \item $d_S(x,y) \ce |L_{S}(x,y)|$ be the \emph{codegree} of $x$ and $y$ in $S$. 
\end{itemize}   
For a given partition $\pi = (V_1,V_2,V_3)$ of a subset of $[n]$, we say that $\pi$ is \emph{balanced} if the sizes of its parts differ by at most $1$. We denote by $K_\pi$ the set of triples with exactly one vertex in each part of $\pi$. Let $H_\pi \ce H \cap K_\pi$ and $\bar{H}_\pi \ce K_\pi \sm H_\pi$. We call hyperedges in $H_\pi$ the \emph{crossing hyperedges} (of $\pi$) and call hyperedges in $\bar{H}_\pi$ the \emph{missing crossing hyperedges} (of $\pi$). For $i,j \in \{1,2,3\}$, we simply write $L_{i,j}(x)$ for $L_{V_i,V_j}(x)$ and $L_{i}(x,y)$ for $L_{V_i}(x,y)$. Similarly, we use $d_{i,j}(x)$ for $d_{V_i,V_j}(x)$ and $d_i(x,y)$ for $d_{V_i}(x,y)$. We also write $L(x,y)$ for $L_{[n]}(x,y)$, $L(x)$ for $L_{[n],[n]}(x)$, $d(x,y)$ for $d_{[n]}(x,y)$, and $d(x)$ for $d_{[n],[n]}(x)$.

\subsection{Results on $\boldsymbol{\sHo}$} \label{subsec::ressHo}
\begin{proposition}
\label{pro::SteineralmostTuran3}
For every integer $t \ge 0$ and sufficiently large $n$, we have \begin{align*}
  \frac{t}{3}\binom{n}{2}  \geq
  \ex(n,\ftp) \geq
\begin{cases}
\frac{t}{3}\binom{n}{2}-\frac{n}{3}-\frac{t^2}{6} - 3t& \text{if } t \text{ is odd and } n \text{ is even},  \\
\frac{t}{3}\binom{n}{2}-\frac{8t}{3} & \text{if } t \text{ is odd and } n \text{ is odd},  \\
\frac{t}{3}\binom{n}{2}-\frac{2t}{3} &\text{if } t \text{ is even}.
\end{cases}
\end{align*}
\end{proposition}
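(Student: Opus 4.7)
The upper bound follows from a standard double-counting: since each hyperedge contains exactly three pairs and each pair lies in at most $t$ hyperedges, $3|H|=\sum_{\{x,y\}\in \binom{[n]}{2}} d(x,y)\le t\binom{n}{2}$, giving $|H|\le \frac{t}{3}\binom{n}{2}$.

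For the lower bound I plan to exhibit explicit $\ftp$-free $3$-graphs, using Theorem~\ref{designdehon} as the main tool. Whenever the divisibility conditions $tn(n-1)\equiv 0\pmod{6}$ and $t(n-1)\equiv 0\pmod{2}$ hold, an $(n,3,2,t)$-design exists on $n$ vertices and is trivially $\ftp$-free with exactly $\frac{t}{3}\binom{n}{2}$ hyperedges, so the claimed lower bound holds with plenty of room. In the remaining cases where these conditions fail, I plan to construct near-designs by decomposition: for $t$ even and $n\equiv 2\pmod{3}$, write $t=6k+r$ with $r\in\{0,2,4\}$, take an $(n,3,2,6k)$-design via Theorem~\ref{designdehon}, and adjoin $r/2$ pairwise-disjoint maximum $(n,3,2,2)$-packings; by classical packing results, each such packing has at least $\frac{2}{3}\binom{n}{2}-\frac{2}{3}$ hyperedges when $n\equiv 2\pmod{3}$, giving total loss at most $\frac{r}{3}\le \frac{2t}{3}$. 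A parallel decomposition, using $(n,3,2,3)$-designs (which do exist for $n\equiv 5\pmod{6}$) as a further ingredient, handles the remaining failure case $t$ odd with $n$ odd and $n\equiv 5\pmod{6}$, yielding total loss bounded by a constant and hence by $\frac{8t}{3}$.

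The main obstacle is the case $t$ odd and $n$ even, where the first divisibility condition in Theorem~\ref{designdehon} fails. Here a parity argument---for each vertex $x$ the sum $\sum_{y\ne x}d(x,y)=2d(x)$ is even, while $(n-1)t$ is odd---forces at least one pair incident to each $x$ to have codegree strictly less than $t$, so any construction loses $\Omega(n)$ hyperedges compared to the trivial upper bound. To meet the claimed lower bound I would take a maximum $(n,3,2,t)$-packing directly; by classical packing theorems of Hanani and Rees--Stinson type, for large $n$ such a packing has size $\lfloor \frac{n}{3}\lfloor \frac{t(n-1)}{2}\rfloor\rfloor=\frac{t}{3}\binom{n}{2}-\frac{n}{6}-O(1)$. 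When additional divisibility issues arise for specific $t\pmod{6}$, a further decomposition into smaller-$\lambda$ packings picks up at most $O(t^2)$ extra loss, yielding the claimed bound $\frac{t}{3}\binom{n}{2}-\frac{n}{3}-\frac{t^2}{6}-3t$ with room to spare. Throughout, the disjointness of the constituent designs and packings (as sets of triples) is easy to ensure for large $n$ since their total triple count $O(tn^2)$ is negligible compared to $\binom{n}{3}=\Theta(n^3)$.
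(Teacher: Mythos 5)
The upper bound is fine and matches the paper. For the lower bound you take a genuinely different route: where the paper applies the Glock--K\"uhn--Lo--Osthus theorem to get $(K_3,t)$-designs of carefully modified host graphs (e.g.\ $K_n$ minus a short matching, or minus vertex-disjoint $C_4$'s, or minus cliques on blocks of size $t+3$), you propose to decompose $t$ and glue together Dehon designs and ``classical'' maximum packings. That is a reasonable idea to try, and if it worked it would in places give a \emph{smaller} loss than the paper's $\Theta(t)$ terms. But as written there are two genuine gaps.

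First, the appeal to ``classical packing theorems of Hanani and Rees--Stinson type'' for $\lambda$-fold triple packings does not deliver what the Tur\'an problem needs. In that literature an $(n,3,2,\lambda)$-packing is typically allowed to have \emph{repeated} blocks when $\lambda>1$, whereas $\ex(n,\ftp)$ counts a hypergraph, i.e.\ a family of \emph{distinct} triples with all codegrees at most $t$. Showing that \emph{simple} $\lambda$-fold packings attain (up to the stated $O(n)+O(t^2)$ error) the Sch\"onheim-type bound for all $\lambda$ and large $n$ is not a classical fact; it is exactly the kind of statement that requires the modern design machinery. Dehon (Theorem~\ref{designdehon}) handles the simple case for \emph{exact} designs under divisibility, which is why the paper needs a different tool (Theorem~\ref{GlockKuhndLoOsthusdesign}) when those conditions fail. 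So the step ``take a maximum $(n,3,2,t)$-packing directly'' in the hardest case ($t$ odd, $n$ even) is not actually available.

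Second, the decomposition steps need a real argument for disjointness, and ``the total triple count $O(tn^2)$ is negligible compared to $\binom{n}{3}$'' is not one. What you need is a near-optimal simple $(n,3,2,r)$-packing avoiding a prescribed $(n,3,2,6k)$-design, i.e.\ a near-optimal packing inside the complement $3$-graph. That the complement is large does not by itself produce such a packing; the obstruction is exactly that the host structure is no longer complete, which is again the situation the Glock--K\"uhn--Lo--Osthus typicality theorem is designed to handle. The paper's construction sidesteps both issues at once by applying that single theorem to slightly perturbed complete (or block-deleted) host graphs, recovering exact $(K_3,t)$-designs with distinct triangles, and, in the $t$ odd, $n$ even case, patching the deleted cliques with the small explicit $3$-graphs of Claim~\ref{claim:3-graphsteiner2}.
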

\noindent
We give the proof of Proposition~\ref{pro::SteineralmostTuran3} in the Appendix.

\label{subsec::com}
\begin{lemma} \label{lem::LowerHo}
For every integer $t \ge 0$ and sufficiently large $n$, we have 
\bdm
\sHo \in \left[
\frac{1}{27}n^3 + \frac{1}{18}tn^2 -\left(\frac{1}{6}t+\frac{4}{9}\right)n - \left(\frac{1}{2}t^2 + \frac{80}{9}t + \frac{2}{27}\right),\, 
\frac{1}{27}n^3 + \frac{1}{18}tn^2 - \frac{1}{6}tn 
\right].
\edm
\end{lemma}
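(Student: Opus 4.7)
The plan is to evaluate or bound each of the four summands in the definition of $\sHo$, namely the product $\pa \pb \pc$ and the three Turán-number terms $\ex(\pa,\ftp)$, $\ex(\pb,\ftp)$, $\ex(\pc,\ftp)$, and combine.

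First I would establish two elementary inequalities by case analysis on $n \bmod 3$, noting that $(\pa,\pb,\pc)$ equals $(q,q,q)$, $(q,q,q+1)$, or $(q,q+1,q+1)$ when $n$ equals $3q$, $3q+1$, or $3q+2$ respectively. Direct computation yields
\[
\frac{n^3}{27} - \frac{n}{9} - \frac{2}{27} \;\leq\; \pa \pb \pc \;\leq\; \frac{n^3}{27}
\qquad\text{and}\qquad
\frac{n^2}{3} \;\leq\; \pa^2 + \pb^2 + \pc^2 \;\leq\; \frac{n^2}{3} + \frac{2}{3},
\]
where the lower bound on the product is attained (up to an additive constant) in the case $n \equiv 2 \pmod 3$.

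For the upper bound on $\sHo$, I combine $\pa \pb \pc \leq n^3/27$ with the universal inequality $\ex(m,\ftp) \leq \frac{t}{3}\binom{m}{2}$ from Proposition~\ref{pro::SteineralmostTuran3}. Summing the three Turán-number terms gives $\frac{t}{6}(\pa^2 + \pb^2 + \pc^2 - n) \leq \frac{tn^2}{18} - \frac{tn}{6} + \frac{t}{9}$. The stray $t/9$ is present only when $n \not\equiv 0 \pmod 3$, but in precisely that case the product deficit is at least $(n-2)/9$, which absorbs it once $n$ is sufficiently large in terms of $t$.

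For the lower bound I would apply Proposition~\ref{pro::SteineralmostTuran3} to each of $\ex(\pa,\ftp)$, $\ex(\pb,\ftp)$, $\ex(\pc,\ftp)$, selecting the case appropriate to the parities of $t$ and of the part. When $t$ is even the per-part deficit beyond $\frac{t}{3}\binom{m}{2}$ is only $2t/3$, and the claimed inequality is comfortably satisfied. When $t$ is odd the per-part deficit is at most $m/3 + t^2/6 + 3t$ for even $m$ and $8t/3$ for odd $m$. I would then enumerate the six residue classes of $n \bmod 6$, in each of which the parities of $\pa,\pb,\pc$ are determined, and explicitly compute the combined product-plus-sum deficit. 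The tightest case is $n \equiv 0 \pmod 6$ with $t$ odd: here $\pa \pb \pc = n^3/27$ but the sum-deficit is $n/3 + t^2/2 + 9t$, and matching against the claimed tolerance $\frac{4n}{9} + \frac{t^2}{2} + \frac{80t}{9} + \frac{2}{27}$ reduces to $t - n \leq 2/3$, which holds for $n \geq t$. The other five residue classes have strictly smaller sum-deficit compensated by a product deficit of at most $n/9 + 2/27$, and an analogous check confirms the bound.

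The main obstacle is purely bookkeeping across the six residue classes and two parities of $t$; there is no conceptual difficulty. The slightly generous constants $4/9$ and $80t/9$ in the statement are calibrated precisely so that a single uniform inequality absorbs the worst combined deficit in every case, and the claim follows for all sufficiently large $n$.
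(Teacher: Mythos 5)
Your proof is correct and follows essentially the same route as the paper: split $\sHo$ into the product term $\pa\pb\pc$ plus the three Tur\'an-number terms, bound each via Proposition~\ref{pro::SteineralmostTuran3}, and tally by residue class. The only cosmetic difference is that the paper cases directly on $n \bmod 3$ and applies the worst of Proposition~\ref{pro::SteineralmostTuran3}'s lower bounds uniformly (so its derived tolerance is largest at $n \equiv 2 \pmod 3$, matching the lemma exactly), whereas you refine to $n \bmod 6$ and the parity of $t$ — a sharper intermediate bookkeeping that identifies $n\equiv 0 \pmod 6$, $t$ odd as the genuine binding case — but both arrive at the same constants, and your observation that the product deficit absorbs the stray $t/9$ surplus when $n \not\equiv 0 \pmod 3$ is precisely how the paper's upper bound closes as well.
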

\begin{proof}
By the definition of $\sHo$ and Proposition~\ref{pro::SteineralmostTuran3}, we have the following statements.\\
\bf{Case:}  $n\equiv 0 \pmod 3$
\begin{align*}
\sHo &= \left(\frac{n}{3}\right)^3 + 3\cdot \ex\left(\frac{n}{3}, \ftp\right) \\ &\in 
\left[ \frac{1}{27}n^3 + \frac{1}{18}tn^2 -\left(\frac{1}{6}t+\frac{1}{3}\right)n -\left(\frac{1}{2}t^2 + 9t\right),\,
\frac{1}{27}n^3 + \frac{1}{18}tn^2 - \frac{1}{6}tn\right].
\end{align*}
\bf{Case:} $n\equiv 1 \pmod 3$
\begin{align*}
\sHo 
&= \left(\frac{n-1}{3}\right)^2\left(\frac{n+2}{3}\right) + 2\cdot \ex\left(\frac{n-1}{3}, \ftp \right) + \ex\left(\frac{n+2}{3}, \ftp\right) \\
&\in 
\Bigg[ \frac{1}{27}n^3 + \frac{1}{18}tn^2 - \left(\frac{1}{6}t+\frac{4}{9}\right)n - \left(\frac{1}{2}t^2 + \frac{80}{9}t - \frac{2}{27}\right),\, \\
&\qquad \qquad \qquad \frac{1}{27}n^3 + \frac{1}{18}tn^2 - \left(\frac{1}{6}t + \frac{1}{9}\right)n + \left( \frac{1}{9}t + \frac{2}{27} \right)
\Bigg].    
\end{align*}
\bf{Case:} $n\equiv 2 \pmod 3$ 
\begin{align*}
\sHo 
&= \left(\frac{n-2}{3}\right)\left(\frac{n+1}{3}\right)^2 + \ex\left(\frac{n-2}{3},\ftp\right) + 2\cdot \ex\left(\frac{n+1}{3}, \ftp \right) \\
&\in 
\Bigg[ \frac{1}{27}n^3 + \frac{1}{18}tn^2 - \left(\frac{1}{6}t+\frac{4}{9}\right)n - \left(\frac{1}{2}t^2 + \frac{80}{9}t + \frac{2}{27}\right),\, \\
&\qquad \qquad \qquad \frac{1}{27}n^3 + \frac{1}{18}tn^2 - \left(\frac{1}{6}t + \frac{1}{9}\right)n + \left( \frac{1}{9}t - \frac{2}{27} \right)
\Bigg].    
\qedhere
\end{align*}
\end{proof}

For a partition $\pi = (V_1,V_2,V_3)$ of $[n]$, let $\fHop$ be the family of hypergraphs on vertex set $[n]$ whose hyperedges contain $K_\pi$, and each part $V_i$ spans a copy of $H(|V_i|,t) \in \mH(|V_i|, t)$. Hence, $\fHop$ is just $\fHo$ when $\pi$ is balanced. Let 
\begin{equation*}
\sHop \ce |V_1||V_2||V_3| + \sum_{i=1}^3 \ex(|V_i|, \ftp)
\end{equation*}
be the number of hyperedges in every $\Hop \in \fHop$.

\begin{lemma} \label{lem::balLar}
Let $t \ge 0$. For every sufficiently large integer $n$, we have $\sHo \ge \sHop$ for every partition $\pi$ of $[n]$, where equality holds only if $\pi$ is balanced.
\end{lemma}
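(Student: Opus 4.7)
The plan is a smoothing argument. For any partition $\sigma=(W_1,W_2,W_3)$ of $[n]$, set $F(\sigma)\coloneqq |W_1|\,|W_2|\,|W_3|+\sum_{i=1}^{3}\ex(|W_i|,\ftp)$, so that $\sHop=F(\pi)$ and $\sHo$ is $F$ evaluated at the balanced partition. Sort the parts of $\pi$ so that $|V_1|\le|V_2|\le|V_3|$ and write $(a,b,c)=(|V_1|,|V_2|,|V_3|)$. If $c-a\le 1$, then $\pi$ is already balanced and $F(\pi)=\sHo$; otherwise let $\pi'$ be a partition whose size multiset is $\{a+1,b,c-1\}$. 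I will show $F(\pi)<F(\pi')$; since this operation strictly decreases $c-a$ after re-sorting, iterating yields $F(\pi)<\sHo$. The one-step change is
\[
F(\pi')-F(\pi) = b(c-a-1) + \bigl[\ex(a+1,\ftp)-\ex(a,\ftp)\bigr] + \bigl[\ex(c-1,\ftp)-\ex(c,\ftp)\bigr].
\]

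I split by the size of $b$. If $b\le n/5$, the partition is very unbalanced and I bypass smoothing with a direct comparison: from $abc\le b^2 c\le 3n^3/125<n^3/27$, $\sum_i\ex(|V_i|,\ftp)\le \tfrac{t}{6}\sum_i|V_i|^2\le tn^2/6$, and the bound $\sHo\ge n^3/27+tn^2/18-O(n+t^2)$ from Lemma~\ref{lem::LowerHo}, I obtain $\sHo-F(\pi)\ge (1/27-3/125)n^3-O(tn^2)>0$ for $n$ large. If $b>n/5$ I apply smoothing. Writing $\ex(m,\ftp)=\tfrac{t}{3}\binom{m}{2}-E(m)$ with $0\le E(m)\le R(m,t)\le m/3+O(t^2)$ by Proposition~\ref{pro::SteineralmostTuran3}, the bracketed ex-sum is at least $-\tfrac{t(c-a-1)}{3}-R(a+1,t)-R(c-1,t)\ge -\tfrac{t(c-a-1)}{3}-(n-b)/3-O(t^2)$, so
\[
F(\pi')-F(\pi) \ge (c-a-1)\bigl(b-\tfrac{t}{3}\bigr) - \tfrac{n-b}{3} - O(t^2).
\]
For $c-a\ge 3$, using $c-a-1\ge 2$ and $b>n/5$, this is $\ge \tfrac{7b}{3}-\tfrac{n}{3}-O(t+t^2)\ge \tfrac{2n}{15}-O(t^2)$, which is positive for $n$ large.

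The main obstacle is the case $c-a=2$, where $\pi'$ is itself balanced and the product gain $b\approx n/3$ has the same order as the naive $O(n)$ error from the bound above, rendering that estimate inconclusive. The remedy is the identity $2\binom{a+1}{2}-\binom{a}{2}-\binom{a+2}{2}=-1$: with $c=a+2$, the bracketed ex-sum equals the second difference $2\ex(a+1,\ftp)-\ex(a,\ftp)-\ex(a+2,\ftp)$, and applying the upper bound of Proposition~\ref{pro::SteineralmostTuran3} to the two outer terms and the lower bound to the middle term bounds this sum below by $-\tfrac{t}{3}-2R(a+1,t)\ge -\tfrac{t}{3}-\tfrac{2(a+1)}{3}-O(t^2)$. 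Combined with the product gain $b\ge a$, this gives $F(\pi')-F(\pi)\ge b-\tfrac{2a}{3}-O(t+t^2)\ge \tfrac{a}{3}-O(t^2)>0$ for $n$ sufficiently large, since $a\ge (n-4)/3$ in this configuration; this closes the induction.
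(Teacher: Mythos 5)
Your proof is correct, but it is organized quite differently from the paper's. The paper exploits the maximizer trick: it assumes $\pi$ achieves the maximum of $s_{t,\pi}^{\circ}(n)$ and first derives the strong a priori bound $a_2\ge(\tfrac13-0.01)n$ (else $s_{t,\pi}^{\circ}(n)<\sHo\le s_{t,\pi}^{\circ}(n)$, a contradiction). With $a_2$ that close to $n/3$, a single smoothing step $V_3\to V_1$ already absorbs the $O(n)$ error coming from $\ex(\cdot,\ftp)\ge \tfrac t3\binom{\cdot}{2}-O(n+t^2)$ uniformly, with no need to distinguish $c-a=2$ from $c-a\ge 3$. You instead run an induction on $c-a$ with only the weaker bound $b>n/5$ (after disposing of $b\le n/5$ by a direct volume comparison). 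This weaker bound makes the linear error $(n-b)/3$ comparable to the product gain $b$ when $c-a=2$, and you close the gap with a genuinely different idea: the exact identity $2\binom{a+1}{2}-\binom{a}{2}-\binom{a+2}{2}=-1$ turns the troublesome bracketed ex-difference into a second difference bounded below by $-\tfrac t3-2R(a+1,t)$, halving the error and pushing the computation through. Both arguments are smoothing arguments; the paper's is shorter once the $a_2$ bound is in hand, while yours avoids the maximizer framing at the price of the second-difference cancellation, which is a nice self-contained observation. One small presentational point: when you say ``iterating yields $F(\pi)<\sHo$,'' it is worth noting explicitly that the iteration is well-founded because $c-a$ strictly decreases, and that if an intermediate partition ever has $b\le n/5$, the direct comparison terminates the chain early; your logic does handle this correctly, but the induction on $c-a$ should be spelled out to make the structure transparent.
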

\begin{proof}
Suppose that there is a partition $\pi = (V_1,V_2,V_3)$ of $[n]$ that achieves the maximum value of $\sHop$, but $\pi$ is not balanced. Assume that the three parts of $\pi$ have sizes $a_1\le a_2\le a_2$, where $a_1+a_2+a_3 =n$ and $a_1 +2 \le a_3$.  We first prove $a_2 \ge (\frac{1}{3}-0.01)n$.
Otherwise, by Proposition~\ref{pro::SteineralmostTuran3} and Lemma~\ref{lem::LowerHo}, we have
\begin{align*}
\sHop 
&\le a_1a_2a_3 + \frac{t}{3}\left(\b{a_1}{2}+\b{a_2}{2}+\b{a_3}{2}\right)
\le \frac{a_1+a_3}{2}a_2\frac{a_1+a_3}{2}+t\b{n}{2} \\
&= a_2\left(\frac{n-a_2}{2}\right)^2 + t\b{n}{2}
\le \left(\frac{1}{3}-0.01\right)n\left(\frac{n-\left(\frac{1}{3}-0.01\right)n}{2}\right)^2 + t\b{n}{2} \\
&< \left(\frac{1}{27}- 10^{-10}\right)n^3 < \sHo,
\end{align*}
where the third-to-last inequality holds because the function $x \to x((n-x)/2)^2$ is monotone increasing for $x \le n/3$. This contradicts our choice of $\pi$. Now, let $\pi'$ be the partition of $[n]$ obtained from $\pi$ by moving a vertex from $V_3$ to $V_1$. Then,
\begin{displaymath}
s_{t,\pi'}^\supforOne(n) - s_{t,\pi}^\supforOne(n) = 
(a_3-a_1-1)a_2 + \ex(a_1+1,\ftp) + \ex(a_3-1,\ftp) - \ex(a_1,\ftp)  - \ex(a_3,\ftp).
\end{displaymath}
By Proposition~\ref{pro::SteineralmostTuran3},
\begin{align*}
    \ex(a_1+1,\ftp) - \ex(a_1,\ftp)\geq  \frac{t}{3} \b{a_1+1}{2} - \frac{a_1+1}{3} - \frac{t^2}{6} - 3t - \frac{t}{3} \b{a_1}{2} \geq \frac{t}{3}a_1- \frac{a_1+1}{3}-4t^2
\end{align*}
and 
\begin{align*}
\ex(a_3-1,\ftp)  - \ex(a_3,\ftp)&\geq \frac{t}{3} \b{a_3-1}{2} - \frac{a_3-1}{3} -  4t^2 - \frac{t}{3} \b{a_3}{2}= -\frac{t+1}{3}(a_3-1)- 4t^2.
\end{align*}
We conclude 
\begin{align*}
    s_{t,\pi'}^\supforOne(n) - s_{t,\pi}^\supforOne(n) &\geq (a_3-a_1-1)\left(a_2 -\frac{t}{3}\right) -\frac{a_1+a_3}{3} - 8t^2
    \geq \left(a_2 -\frac{t}{3}\right) -\frac{a_1+a_3}{3} - 8t^2\\
    &=\frac{4}{3}a_2 - \frac{n}{3}-\frac{t}{3}-8t^2
>0,
\end{align*}
contradicting our choice of $\pi$.
\end{proof}

\subsection{An almost extremal $\boldsymbol{\ffp}$-free hypergraph} \label{subsec::AnAlo}
For $T = \{v_1,\ldots,v_t\} \subseteq [n]$ and a partition $\varpi = (W_1,W_2,W_3)$ of $[n]\sm T$, define $\Htp$ to be the $3$-graph on vertex set $[n]$ with hyperedges 
\bdm
K_\varpi\cup \b{T}{3} \cup \bigcup_{i=1}^3\{v x_1x_2: v \in T,\, x_1,x_2 \in W_i\}.
\edm
For an illustration of $\Htp$, see Figure~\ref{fig::Ht}. Let 
\bdm
\sHtp \ce |W_1||W_2||W_3| + t\sum_{i=1}^3 \b{|W_i|}{2} + \b{t}{3}
\edm
be the number of hyperedges in $\Htp$. We remark that similar constructions were proved by Simonovits to be extremal constructions for certain graphs (see Theorem 2 in~\cite{simonovits1968method}).
\begin{figure}[ht]
\tikzstyle{every node}=[circle, draw, fill=black!80, inner sep=0pt, minimum width=4pt]
    \centering
	\begin{tikzpicture}[scale = 1.45]
	    \draw[color=white, opacity = .0, use as bounding box] (0,-1) rectangle (2,1.5);
	    
	    \tikzstyle{every node}=[circle, draw, fill=black!80, inner sep=0pt, minimum width=2.1pt]
	    \node (x) at (1, 0.7) [label=right:] {};  
	    \node (y) at (0.25, -0.6) [label=below:] {};
	    \node (z) at (1.75, -0.6) [label=below:] {};
	    
	    \node (v1) at (2.45, 1.1) [label={[label distance=0.05cm]0:$v_1$}] {};  
	    \node (v2) at (2.45, 0.9) [label={[label distance=0.05cm]0:$v_2$}] {};
	    \node (v3) at (2.45, 0.7) [label={[label distance=0.05cm]0:$v_3$}] {};
	    \node (vt) at (2.45, 0.3) [label={[label distance=0.05cm]0:$v_t$}] {};
	    
	    \node at (1.75, 1.2) [fill=black!0,draw=black!0] {$V_1$};
	    \node at (-0.2, 0.2) [fill=black!0,draw=black!0] {$V_2$};
	    \node at (2.95, -0.5) [fill=black!0,draw=black!0] {$V_3$};
	    
	    \node (x1) at (1.32, 0.6) {};
	    \node (x2) at (1.35, 0.45) {};
	    \node (y1) at (0.45, -0.4) {};
	    \node (y2) at (0.5,-0.5) {};
	    \node (z1) at (1.9, -0.4) {};
	    \node (z2) at (2, -0.46) {};

		\draw [very thick] (1,0.8) ellipse (0.7cm and 0.5cm);
		\draw [very thick] (0,-0.5) ellipse (0.7cm and 0.5cm);
		\draw [very thick] (2,-0.5) ellipse (0.7cm and 0.5cm);

		\begin{pgfonlayer}{bg}
            \node at (2.45, 0.55) [fill=black!0,draw=black!0] {$\vdots$};
            \draw[thick] \hedgeiii{v1}{v2}{v3}{0.8mm};
            
            \draw[thick] (x) -- (y) -- (z) -- (x);
            \draw[thick] (v3) -- (x1) -- (x2) -- (v3);
            \draw[thick] (v3) -- (y1) -- (y2) -- (v3);
            \draw[thick] (v3) -- (z1) -- (z2) -- (v3);
        \end{pgfonlayer}
    \end{tikzpicture}
    \caption{Hypergraph $\Htp$.} \label{fig::Ht}
\end{figure}

\begin{lemma} \label{lem::HoHt}
Let $t \ge 1$. For every sufficiently large integer $n$, we have $\sHo > \sHtp + \frac{n}{10}$ for every set $\{v_1, \ldots, v_t\} \subseteq [n]$ and partition $\varpi$ of $[n] \sm\{v_1,\ldots, v_t\}$.
\end{lemma}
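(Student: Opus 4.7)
The strategy is to obtain a sharp upper bound on $\sHtp$ by showing it is maximized when $\varpi$ is balanced, and then compare with the lower bound on $\sHo$ from Lemma~\ref{lem::LowerHo}.

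Write $w_i = |W_i|$, so $w_1 + w_2 + w_3 = n - t$, and set
$$
f(w_1, w_2, w_3) := w_1 w_2 w_3 + t\sum_{i=1}^{3}\binom{w_i}{2},
$$
so that $\sHtp = f(w_1,w_2,w_3) + \binom{t}{3}$. I would first show that over nonnegative integer triples summing to $n-t$, $f$ is maximized at a balanced partition. The key computation is the effect of shifting one unit from $w_i$ to $w_j$ (with $w_k$ the third coordinate), which after telescoping simplifies to $\Delta f = (w_k - t)(w_i - w_j - 1)$. Whenever $w_k > t$ and $w_i - w_j > 1$, this shift strictly increases $f$. For $n$ sufficiently large we have $(n-t)/3 > t$, so at the optimum all three parts either exceed $t$ (forcing $|w_i - w_j| \leq 1$ for every pair, i.e.\ balanced) or at least two are $\leq t$. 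In the latter boundary cases a short estimate gives $f \leq \frac{t}{2}(n-t)^2 + O(t^2 n)$, which is dwarfed by the $n^3/27$ achieved at the balanced point.

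Substituting $w_i = (n-t)/3$ into $f$ yields the uniform upper bound
$$
\sHtp \leq \frac{(n-t)^3}{27} + \frac{t(n-t)^2}{6} - \frac{t(n-t)}{2} + \binom{t}{3}.
$$
Expanding this polynomial in $n$ and subtracting from the lower bound of Lemma~\ref{lem::LowerHo}, the leading $n^3/27$ and $tn^2/18$ terms cancel exactly, leaving
$$
\sHo - \sHtp \geq \frac{tn}{3} + \frac{2t^2 n}{9} - \frac{4n}{9} - O(t^3).
$$
For $t = 1$ the right-hand side is $n/9 - O(1)$, which exceeds $n/10$ once $n$ is sufficiently large. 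For $t \geq 2$ the term $tn/3$ alone is at least $2n/3$, comfortably exceeding $n/10$. In either case the conclusion follows.

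The main obstacle is the optimization step. Because the leading-order coefficients $tn^2/18$ coincide on both sides, any looser upper bound on $\sHtp$ (e.g.\ bounding the two summands $w_1 w_2 w_3$ and $t\sum \binom{w_i}{2}$ separately by their individual maxima) would fail to produce a positive difference even at the $\Theta(n)$ level. The identity $\Delta f = (w_k - t)(w_i - w_j - 1)$ keeps the case analysis manageable, but the boundary regime in which one or two of the $w_k$ are at most $t$ must be handled explicitly to exclude competing near-extremal configurations, and this is where the hypothesis that $n$ is large compared to $t$ enters.
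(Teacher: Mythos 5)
Your proposal is correct and follows essentially the same approach as the paper: bound $\sHtp$ by maximizing $w_1w_2w_3 + t\sum_i\binom{w_i}{2}$ over triples summing to $n-t$, show the maximum occurs at the balanced point $((n-t)/3,(n-t)/3,(n-t)/3)$, and then compare with the Lemma~\ref{lem::LowerHo} lower bound on $\sHo$, where the $n^3/27$ and $tn^2/18$ terms cancel. The paper works over the continuous simplex and uses the pairwise-averaging identity $f(y_1,\tfrac{y_2+y_3}{2},\tfrac{y_2+y_3}{2})-f(y_1,y_2,y_3)=\tfrac14(y_1-t)(y_2-y_3)^2$, together with the simple observation that even a single coordinate $y_i\le t$ already forces $f=O(tn^2)\ll n^3/27$; your discrete unit-shift identity $\Delta f=(w_k-t)(w_i-w_j-1)$ is the exact analogue, though the stated dichotomy (``all exceed $t$'' versus ``at least two are $\le t$'') as written skips the exactly-one-small configuration, which is most cleanly ruled out by the paper's observation that one small coordinate alone collapses the cubic term.
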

\begin{proof}
Define function
\bdm
f_{n,t}(x_1,x_2,x_3) \ce x_1x_2x_3 + t\left(\frac{x_1(x_1-1)}{2}+\frac{x_2(x_2-1)}{2}+\frac{x_3(x_3-1)}{2}\right) + \b{t}{3}
\edm 
on the domain $D:=\{(x_1,x_2,x_3)\in\mathbb{R}^3: \ x_1,x_2,x_3 \ge 0, \ x_1+x_2+x_3 = n-t\}$. Note that for every set $T = \{v_1,\ldots,v_t\}\subseteq [n]$ and partition $\varpi = (W_1,W_2,W_3)$ of $[n] \sm T$, we have $\sHtp = f(|W_1|,|W_2|,|W_3|)$. Also note that
$$
f_{n,t}\left(\frac{n-t}{3},\frac{n-t}{3},\frac{n-t}{3}\right) = \frac{1}{27}n^3 + \frac{1}{18}tn^2 - \left(\frac{2}{9}t^2 + \frac{1}{2}t\right) n + \frac{8}{27}t^3 + \frac{1}{3}t.
$$
Since $D$ is compact, $f_{n,t}$ achieves its maximum value at some point $(y_1,y_2,y_3)\in D$. If $y_1 \le t$, then  
$$
f_{n,t}(y_1,y_2,y_3) \le tn^2 + 3t\frac{n^2}{2} +t^3 < \frac{n^3}{27} < f_{n,t}\left(\frac{n-t}{3},\frac{n-t}{3},\frac{n-t}{3}\right),
$$
a contradiction. Thus, $y_1 > t$. Similarly, $y_2 > t$ and $y_3 > t$. We have
\begin{align*}
f_{n,t}\left(y_1,\frac{y_2+y_3}{2},\frac{y_2+y_3}{2}\right) - f_{n,t}\left(y_1,y_2,y_3\right)
= \frac{1}{4}(y_1-t)(y_2-y_3)^2 \ge 0,
\end{align*}
with equality iff $y_2 = y_3$. Using the symmetry of $f$, we conclude that the maximum value of $f_{n,t}$ is achieved when $x_1=x_2=x_3=\frac{n-t}{3}$. For $t \ge 1$, by Lemma~\ref{lem::LowerHo}, we have 
$$
\sHo - f_{n,t}\left(\frac{n-t}{3},\frac{n-t}{3},\frac{n-t}{3}\right) 
\ge \left(\frac{2}{9}t^2 + \frac{1}{3}t - \frac{4}{9} \right)n - \left(\frac{8}{27}t^3 + \frac{1}{2}t^2 + \frac{83}{9}t + \frac{2}{27}\right)
> \frac{n}{10}.
$$
Therefore, $\sHo > \sHtp + \frac{1}{10}n$.
\end{proof}

\subsection{Stability result} \label{subsec::sta}
In this section, we prove the following stability theorem for $\ffp$.
\begin{theorem}[Stability theorem for $\ffp$] \label{thm::o3Sta}
For every $t \ge 0$ and $\eps >0$, there exist $\delta \in (0, \eps)$ and $N>0$ such that for every $n>N$, the following statement holds. Let $H$ be an $\ffp$-free $3$-graph $H$ on vertex set $[n]$ with at least $(\frac{1}{27}-\delta)n^3$ hyperedges, and let $\pi= (V_1,V_2,V_3)$ be a partition of $[n]$ that maximizes $|H_\pi|$. Then $|H \sm H_\pi| \le \eps n^3$. 
\end{theorem}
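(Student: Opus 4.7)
The plan is to reduce Theorem~\ref{thm::o3Sta} to the stability version of the Frankl--F\"uredi theorem on $F_5 = F_5^0$ established by Keevash and Mubayi~\cite{keevash2004stability}. The key observation is that in an $\ffp$-free hypergraph, every copy of $F_5$ is forced to lie on a pair of bounded codegree, so one can strip off $O(n^2)$ edges to obtain an $F_5$-free subhypergraph and then apply classical $F_5$-stability.

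First I would establish the following codegree bound: if $H$ is $\ffp$-free and contains a copy of $F_5$ on vertices $\{v_1, v_2, v_3, v_4, v_5\}$ with edges $v_1 v_2 v_3$, $v_1 v_2 v_4$, $v_3 v_4 v_5$, then $d(v_3, v_4) \le t + 2$. Indeed, if $d(v_3, v_4) \ge t + 3$, then $|L(v_3, v_4) \sm \{v_1, v_2\}| \ge t + 1$, and selecting any $t+1$ vertices $w_1, \ldots, w_{t+1}$ from this set produces edges $v_3 v_4 w_1, \ldots, v_3 v_4 w_{t+1}$ which, together with $v_1 v_2 v_3$ and $v_1 v_2 v_4$, form a copy of $\ffp$ in $H$, contradicting the assumption.

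Next, let $E' \subseteq H$ be the set of edges containing at least one pair $\{u, v\}$ with $d(u, v) \le t + 2$. Since each such pair lies in at most $t+2$ edges,
\[
|E'| \le \sum_{\{u,v\}: d(u, v) \le t+2} d(u, v) \le (t + 2)\binom{n}{2}.
\]
By the codegree bound, every copy of $F_5$ in $H$ contains its ``bottom'' edge $v_3 v_4 v_5 \in E'$, so $H' \ce H \sm E'$ is $F_5$-free. For $n$ large and $\delta$ small, $|H'| \ge (1/27 - 2\delta) n^3$, so the $F_5$-stability theorem of Keevash and Mubayi, applied with tolerance $\eps/2$ and $\delta$ sufficiently small in terms of $\eps$, yields a partition $\pi^* = (V_1, V_2, V_3)$ of $[n]$ with $|H' \sm H'_{\pi^*}| \le (\eps/2) n^3$.

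Finally, because $H \sm H_{\pi^*} \subseteq (H' \sm H'_{\pi^*}) \cup E'$ and the partition $\pi$ in the statement of the theorem maximizes $|H_\pi|$,
\[
|H \sm H_\pi| \le |H \sm H_{\pi^*}| \le \tfrac{\eps}{2} n^3 + (t+2)\binom{n}{2} \le \eps n^3
\]
for $n$ sufficiently large. The main obstacle is the first step: choosing the correct codegree threshold to ensure that deleting $E'$ kills every $F_5$ while still discarding only $O(n^2)$ edges. Once this is in place, the rest of the argument is routine bookkeeping combined with an appeal to the known $F_5$-stability.
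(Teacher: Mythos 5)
Your proposal is correct, and it takes a genuinely different and more elementary route than the paper. The paper's proof of Theorem~\ref{thm::o3Sta} uses heavy machinery: it invokes Lemma~\ref{lem::ManCopOfFThenBlo} (supersaturation/blow-up) to conclude that an $\ffp$-free $H$ has at most $\zeta n^5$ copies of $F_5$, then applies the hypergraph removal lemma (Theorem~\ref{thm::HypRemLem}) to delete $\delta n^3$ edges and produce an $F_5$-free subhypergraph, and only then applies the Keevash--Mubayi $F_5$-stability theorem. Your argument replaces the first two steps with a direct combinatorial observation: in any $\ffp$-free $H$, every copy of $F_5$ with edges $v_1v_2v_3, v_1v_2v_4, v_3v_4v_5$ forces $d(v_3,v_4)\le t+2$, since $t+1$ additional vertices in $L(v_3,v_4)\setminus\{v_1,v_2\}$ would yield a copy of $\ffp$. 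Deleting the set $E'$ of edges containing a pair of codegree at most $t+2$ thus kills every $F_5$, and $|E'|\le (t+2)\binom{n}{2}=O(n^2)$, far less than the $\delta n^3$ the removal lemma permits. The rest (apply $F_5$-stability to $H'=H\setminus E'$ with tolerance $\eps/2$, then use maximality of $\pi$) is bookkeeping, and your inclusion $H\setminus H_{\pi^*}\subseteq(H'\setminus H'_{\pi^*})\cup E'$ is verified correctly. This approach is cleaner and avoids the removal lemma entirely; it also gives much better quantitative dependence of $\delta$ on $\eps$ (linear rather than tower-type), and the codegree-deletion idea is in the same spirit as Lemma~\ref{lem::Qn29edges}, which the paper does use elsewhere. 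The paper's route is more modular in that it shows directly how blow-up stability is inherited from $F_5$ without any structural analysis of $\ffp$, but for this particular family your argument is the simpler one.
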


\begin{claim} \label{cla::o3staP}
The partition $\pi=(V_1,V_2,V_3)$ in Theorem~\ref{thm::o3Sta} has the following additional pro\-perties.
\begin{enumerate}[label=(\roman*)]
    \item For $i \in [3]$, we have $(\frac{1}{3} - 3\eps^{1/2})n< |V_i| < (\frac{1}{3} + 6\eps^{1/2})n $.

    \item For $\{i,j,k\} = [3]$, we have
        $
        |\{ xy : x \in V_i,\,y\in V_j,\,d_k(x,y)\le |V_k|-\eps^{1/2}n \}| \le 2\eps^{1/2} n^2.
        $
    \item For $\{i,j,k\} = [3]$ and every $x\in V_i$, we have
            $
            d_{i,j}(x),\;d_{i,k}(x) \le 3\eps^{1/2} n^2.
            $
\end{enumerate}
\end{claim}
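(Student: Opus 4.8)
The plan is to derive everything from the conclusion of Theorem~\ref{thm::o3Sta} itself (that $|H\sm H_\pi|\le \eps n^3$), the maximality of $\pi$, and the $\ffp$-freeness of $H$; we may assume $\eps$ is below a suitable absolute constant, since otherwise all three assertions are weaker than trivial bounds such as $|V_i|\le n$, $d_{i,j}(x)\le |V_i||V_j|$ and $2\eps^{1/2}n^2\ge n^2$. Write $n_i:=|V_i|$. Since $|H_\pi|=|H|-|H\sm H_\pi|\ge(\tfrac1{27}-\delta-\eps)n^3\ge(\tfrac1{27}-2\eps)n^3$ while $|H_\pi|\le n_1n_2n_3\le (n/3)^3$, the product $n_1n_2n_3$ is within $2\eps n^3$ of its maximum $n^3/27$. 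For (i), assume $n_1\le n_2\le n_3$; by AM--GM $n_1n_2n_3\le n_1\big(\tfrac{n-n_1}2\big)^2$, and with $n_1=\alpha n$, $\alpha\le\tfrac13$, one has $\tfrac14\alpha(1-\alpha)^2=\tfrac1{27}-\tfrac14(\alpha-\tfrac13)^2+\tfrac14(\alpha-\tfrac13)^3\le\tfrac1{27}-\tfrac14(\alpha-\tfrac13)^2$, so comparison with $\tfrac1{27}-2\eps$ gives $(\alpha-\tfrac13)^2\le 8\eps$, i.e.\ $n_1>(\tfrac13-3\eps^{1/2})n$; as $n_1$ is the smallest part, every part lies in $\big((\tfrac13-3\eps^{1/2})n,\,(\tfrac13+6\eps^{1/2})n\big)$.

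For (ii), fix $\{i,j,k\}=[3]$ and note every crossing edge is counted exactly once in $\sum_{x\in V_i,\,y\in V_j}d_k(x,y)$, so $|H_\pi|=\sum_{x\in V_i,\,y\in V_j}d_k(x,y)$. If $m$ pairs $xy$ with $x\in V_i$, $y\in V_j$ have $d_k(x,y)\le |V_k|-\eps^{1/2}n$, then $|H_\pi|\le n_in_jn_k-m\eps^{1/2}n\le\tfrac1{27}n^3-m\eps^{1/2}n$, hence $m\eps^{1/2}n\le 2\eps n^3$ and $m\le 2\eps^{1/2}n^2$. The same counting gives a fact used below: for $v\in V_i$ put $M(v):=\{(c,d)\in V_j\times V_k:\{v,c,d\}\notin H\}$, so $|M(v)|=n_jn_k-d_{j,k}(v)$ and $\sum_{v\in V_i}|M(v)|=n_in_jn_k-|H_\pi|\le 2\eps n^3$; in particular all but at most $2\eps^{1/2}n$ vertices $v\in V_i$ satisfy $|M(v)|\le\eps^{1/2}n^2$.

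For (iii), suppose for contradiction that $d_{i,j}(x)>3\eps^{1/2}n^2$ for some $i$, some $j\ne i$ and some $x\in V_i$, with $k$ the third index. Moving $x$ from $V_i$ to $V_k$ produces a partition of $[n]$ (all parts nonempty by (i)) whose crossing edges are exactly those of $H_\pi$, minus the $d_{j,k}(x)$ crossing edges through $x$, plus the $d_{i,j}(x)$ edges $\{x,y,z\}$ with $y\in V_i$, $z\in V_j$; hence $|H_\pi|$ changes by $d_{i,j}(x)-d_{j,k}(x)$, and maximality of $\pi$ forces $d_{j,k}(x)\ge d_{i,j}(x)>3\eps^{1/2}n^2$, i.e.\ $|M(x)|=n_jn_k-d_{j,k}(x)<n_jn_k-3\eps^{1/2}n^2$. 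Now the vertices $y\in V_i$ that are \emph{bad} ($|M(y)|>\eps^{1/2}n^2$) or \emph{small} ($d_j(x,y)\le\eps^{1/2}n$) together contribute, using (i) with $n_i,n_j<n/2$, at most $2\eps^{1/2}n\cdot\tfrac n2+\tfrac n2\cdot\eps^{1/2}n=\tfrac32\eps^{1/2}n^2<3\eps^{1/2}n^2$ to $d_{i,j}(x)=\sum_{y\in V_i}d_j(x,y)$, so we may fix $y\in V_i\sm\{x\}$ that is neither bad nor small. Then $|M(x)|+|M(y)|<n_jn_k$, so some $(c,d)\in V_j\times V_k$ has $\{x,c,d\},\{y,c,d\}\in H$; and since $|L_{V_j}(x,y)|=d_j(x,y)>\eps^{1/2}n$, for $n$ large we may pick distinct $w_1,\dots,w_{t+1}\in L_{V_j}(x,y)\sm\{c\}$. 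The $5+t$ distinct vertices $x,y,c,d,w_1,\dots,w_{t+1}$ (in $V_i,V_i,V_j,V_k,V_j,\dots,V_j$ respectively), with $3=x$, $4=y$, $1=c$, $2=d$, $5,\dots,5+t=w_1,\dots,w_{t+1}$, span the hyperedges $\{1,2,3\}=\{c,d,x\}$, $\{1,2,4\}=\{c,d,y\}$ and $\{3,4,k'\}=\{x,y,w_\ell\}$ for $k'\in\{5,\dots,5+t\}$, i.e.\ a copy of $\ffp$ in $H$ --- a contradiction. Repeating the argument with $j$ and $k$ interchanged bounds $d_{i,k}(x)$ as well.

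The main obstacle is part~(iii): parts (i), (ii) and the bookkeeping in (iii) are routine counting, but one must spot the right configuration. The pair of $\ffp$ forced to have large codegree (the vertices ``$3,4$'') should be an \emph{intra-part} pair $\{x,y\}\subseteq V_i$ of atypically large codegree into $V_j$ --- this is what lets $5,\dots,5+t$ all sit in $V_j$ --- while ``$1,2$'' is realized by a single $V_j$--$V_k$ edge common to the two link graphs $L(x)$ and $L(y)$. Producing such a common edge is precisely where both hypotheses get used: the vertex-swap inequality $d_{j,k}(x)\ge d_{i,j}(x)$ (from maximality of $\pi$) keeps $L(x)$ nearly complete between $V_j$ and $V_k$, and part (ii) does the same for $L(y)$ for a suitably chosen $y$, so that $M(x)\cup M(y)$ cannot exhaust $V_j\times V_k$.
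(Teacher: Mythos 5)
Your proof is correct. Parts (i) and (ii) match the paper's argument almost line for line (same AM--GM/calibration for (i), same double-counting of crossing hyperedges for (ii)), but your part (iii) assembles the forbidden $\ffp$ from a genuinely different configuration. You take the high-codegree pair $\{3,4\}$ to be an \emph{intra-part} pair $\{x,y\}\subseteq V_i$ and realize $\{1,2\}$ as a single edge $(c,d)\in V_j\times V_k$ lying in both link graphs $L_{j,k}(x)$ and $L_{j,k}(y)$, then feed the extra vertices $5,\dots,5+t$ from $L_{V_j}(x,y)$. The paper instead takes $\{3,4\}$ to be the cross-pair $\{x,z\}\in V_i\times V_k$, after first invoking (ii) to extract a hyperedge $x'yx$ with $x'\in V_i$, $y\in V_j$ whose cross-pair $\{x',y\}$ has nearly full codegree in $V_k$, and then a small counting step to force some $z\in L_k(x',y)$ with $d_j(x,z)\ge t+2$. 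Both routes use the same two hypotheses --- the swap inequality $d_{j,k}(x)\ge d_{i,j}(x)$ from maximality of $\pi$, and the quantitative ``links are nearly complete'' consequence of $|H\sm H_\pi|\le\eps n^3$ --- but you apply the latter as a Markov bound on $\sum_{v\in V_i}|M(v)|$ to pick a good second vertex $y\in V_i$ directly, rather than going through the pair-level statement (ii). Your version avoids the paper's auxiliary ``otherwise $d_{j,k}(x)$ would be too small'' contradiction step at the cost of a short bookkeeping calculation to rule out ``bad'' and ``small'' candidates for $y$; overall the two arguments are of comparable length and both are clean. One cosmetic difference worth noting: the paper's argument deduces the existence of the helpful $y$ from the already-proved statement (ii), whereas you re-derive the vertex-level analogue from scratch; if you wanted to streamline, you could instead cite (ii) to produce the pair $(c,d)$ once $x$ and $y$ are in hand.
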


We need the following theorems and definitions for proving Theorem~\ref{thm::o3Sta}. For an $r$-graph $H$, a \emph{blow-up} of $H$ is an $r$-graph obtained from $H$ by replacing every vertex $v$ by a set $S_v$, where $S_{v_1} \cap S_{v_2} = \es$ for distinct vertices $v_1,v_2$ and replacing every hyperedge $\{v_1,\ldots, v_r\}$ by the complete $r$-partite $r$-graph on $S_{v_1}\cup\ldots\cup S_{v_r}$. When $|S_v|=t$ for every vertex $v$, the blow-up is called the $t$-\emph{blow-up} of $H$ and denoted by $H[t]$. Note that $\ffp$ is a blow-up of $F_5$ and $\ffp \subseteq F_5[t+1]$ for every $t \ge 0$. We will see that Theorem~\ref{thm::o3Sta} follows easily from the stability result for $F_5$ and the fact that blow-ups inherit the stability of the original hypergraph. 

\begin{theorem}[Stability result for $F_5$, Keevash and Mubayi~\cite{keevash2004stability}]
\label{thm::F5Sta}
For every $a >0$, there exist $b>0$ and $N>0$ such that if $n>N$ and $H$ is an $F_5$-free $3$-graph on vertex set $[n]$ with at least $(\frac{1}{27}-b)n^3$ hyperedges, then there exists a partition $\pi = (V_1,V_2,V_3)$ of $[n]$ such that $|H \sm H_\pi| \le a n^3$.
\end{theorem}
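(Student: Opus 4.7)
The plan is to extend the exact Tur\'an argument for $F_5$ of Frankl and F\"uredi to the near-extremal regime, reading the target tripartition $\pi=(V_1,V_2,V_3)$ off from the link graphs $L(v)$.

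The key structural consequence of $F_5$-freeness is a codegree restriction: if $\{a,b,c\},\{a,b,d\}\in H$ with $c\neq d$, then $L(c,d)\subseteq \{a,b\}$, so $d(c,d)\le 2$. First I would show that $|H|\ge(1/27-b)n^3$ together with this restriction forces most link graphs $L(v)$ to be dense (with $|L(v)|$ close to $n^2/9$) and close to bipartite. The average degree lower bound is immediate from $\sum_v|L(v)|=3|H|$, and "close to bipartite" follows from a supersaturation/counting argument: any triangle $xyz\subseteq L(v)$ produces the dense pair $\{v,x\}$ with $y,z\in L(v,x)$, forcing $L(y,z)\subseteq\{v,x\}$; if $L(v)$ contained $\Omega(n^3)$ triangles on average, two such triangles would combine to either violate this restriction or embed a copy of $F_5$ into $H$. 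Consequently there exists $\eps_1=\eps_1(b)$ with $\eps_1\to 0$ as $b\to 0$ such that all but $o(n)$ vertices $v$ admit a max-cut bipartition $(A_v,B_v)$ of $[n]\setminus\{v\}$ with at most $\eps_1 n^2$ non-cut edges.

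Next I would align the local bipartitions into a global tripartition. Declare two good vertices $u,v$ compatible if (up to swapping sides) $A_u$ and $A_v$ agree on all but $o(n)$ vertices. Using the codegree restriction once more to rule out twisted mismatches, a case analysis shows compatibility is an equivalence relation that partitions the good vertices into exactly three classes $V_1,V_2,V_3$, with vertices in the same class sharing the same local bipartition and vertices in different classes appearing on opposite sides of each other's bipartitions. The $o(n)$ bad vertices can be distributed among the $V_i$'s arbitrarily to obtain a partition $\pi$ of $[n]$.

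Finally, to bound $|H\setminus H_\pi|$: each non-crossing hyperedge $\{x,y,z\}$ witnesses a non-cut edge in $L(v)$ for some good vertex $v\in\{x,y,z\}$, so double-counting these witnesses and using the per-vertex bound of $\eps_1 n^2$ yields $|H\setminus H_\pi|=O(\eps_1 n^3)$; choosing $b$ small enough that $\eps_1(b)\le a/C$ for an absolute constant $C$ completes the proof. The main obstacle is the gluing step: producing a globally consistent tripartition despite the presence of $o(n)$ exceptional vertices and the freedom in the local bipartitions of isolated-like vertices (those in $V_i\setminus\{v\}$ for $v\in V_i$); the codegree restriction is the lever that rules out near-compatible but truly mismatched bipartitions, and its careful use is the technical heart of the proof.
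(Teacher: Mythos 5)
The paper does not prove this theorem; it cites it as a black box from Keevash and Mubayi \cite{keevash2004stability}. You are therefore attempting a from-scratch proof of a result the paper imports, which is a legitimate but genuinely different route. Your codegree observation (if $abc,abd\in H$ with $c\neq d$ then $L(c,d)\subseteq\{a,b\}$) is correct and is indeed central to analyses of $F_5$-free hypergraphs, but the plan has two real gaps.

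First, the assertion that a dense link graph $L(v)$ must admit a bipartition with at most $\eps_1 n^2$ non-cut edges does not follow from the ideas offered. The intermediate claim you gesture at --- that triangles in $L(v)$ force low-codegree pairs --- is true but insufficient: showing $L(v)$ has few triangles is not the same as showing it is close to bipartite. A balanced blow-up of $C_5$ is triangle-free, has about $n^2/5>n^2/9$ edges, and every bipartition leaves a constant fraction of its edges uncut; thinning it to $n^2/9$ edges preserves this. So even a complete success in bounding the triangle count in $L(v)$ would not yield the max-cut conclusion you need. Moreover, the quantitative step (``two such triangles combine to embed $F_5$'') is not spelled out, and it is not clear it works: two triangles of $L(v)$ just give several hyperedges through $v$ and several low-codegree pairs, neither of which obviously violates $F_5$-freeness. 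What your plan really needs is that $L(v)$ is close to \emph{complete bipartite with balanced parts}, a much stronger property than merely ``close to bipartite.'' Keevash and Mubayi circumvent this by first proving stability for cancellative $\{K_4^-,F_5\}$-free $3$-graphs (where link graphs are genuinely triangle-free, and one can exploit the $\{K_4^-,F_5\}$ structure much more directly) and then bootstrapping to $F_5$ alone; nothing in your sketch substitutes for that intermediate result.

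Second, the alignment of the local bipartitions $(A_v,B_v)$ into a single global tripartition is acknowledged by you to be ``the technical heart'' and is left entirely unproved. Defining compatibility and asserting it is an equivalence relation with exactly three classes is precisely the kind of step that can fail in the presence of the $o(n)$ exceptional vertices and the freedom in choosing max-cuts; without the case analysis this is not a proof but a statement of hope. As written, the proposal is a plausible outline that identifies some of the right tools, but neither of its two load-bearing steps is established, and the first has a concrete counterexample to the claimed intermediate statement.
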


\begin{lemma}[e.g.~\cite{keevash2011hypergraph}] \label{lem::ManCopOfFThenBlo}
For fixed integers $l \ge r \ge 2$, $t \ge 1$ and every $\alpha > 0$, there exists $n_0 = n_0(l,r,t,\alpha)$ such that the following holds. Let $F$ be an $r$-graph on $l$ vertices and $H$ be an $r$-graph on $n\ge n_0$ vertices. If $H$ contains at least $\alpha n^l$ copies of $F$, then $H$ contains a copy of $F[t]$.
\end{lemma}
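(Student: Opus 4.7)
The plan is to reduce the question to Erd\H{o}s's classical multipartite Tur\'an-type theorem: for any $s \ge 1$ and $\beta > 0$, every $l$-uniform $l$-partite hypergraph with $n$ vertices in each class and at least $\beta n^l$ hyperedges contains a complete $l$-partite subhypergraph $K_{s,\ldots,s}^{(l)}$ once $n$ is large enough. (This Erd\H{o}s theorem is itself proved by induction on $l$ using Cauchy--Schwarz, generalizing the K\H{o}v\'ari--S\'os--Tur\'an argument.) Taking this as the main tool, the strategy is to encode labeled copies of $F$ in $H$ as hyperedges of an auxiliary multipartite hypergraph, apply Erd\H{o}s's result, and then extract disjoint vertex classes to form a blow-up.

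First I would fix an ordering $u_1, \ldots, u_l$ of $V(F)$ and take $l$ disjoint copies $V_1, \ldots, V_l$ of $V(H) = [n]$. For each labeled copy of $F$ in $H$ --- that is, each $l$-tuple $(v_1, \ldots, v_l)$ of distinct vertices with $\{v_{i_1}, \ldots, v_{i_r}\} \in H$ whenever $\{u_{i_1}, \ldots, u_{i_r}\} \in F$ --- I would put the corresponding hyperedge $\{v_1^{(1)}, \ldots, v_l^{(l)}\}$ into an auxiliary $l$-uniform $l$-partite hypergraph $\mathcal{A}$ on vertex classes $V_1, \ldots, V_l$, where $v_i^{(i)} \in V_i$ denotes the copy of $v_i$. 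Since each of the $\alpha n^l$ copies of $F$ in $H$ contributes at least one (in fact $|\mathrm{Aut}(F)|$) labeled copies, $\mathcal{A}$ has at least $\alpha n^l$ hyperedges.

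Next I would apply Erd\H{o}s's theorem to $\mathcal{A}$ with $s := lt$, obtaining subsets $S_i \subseteq V_i$ of size $lt$ each such that every tuple $(w_1, \ldots, w_l) \in S_1 \times \cdots \times S_l$ is a hyperedge of $\mathcal{A}$, equivalently, a labeled copy of $F$ in $H$. The catch is that distinct $S_i$'s, when pulled back to $V(H)$, may share vertices. To handle this, I would greedily choose disjoint $T_i \subseteq S_i$ of size $t$ in turn for $i = 1, \ldots, l$: after fixing $T_1, \ldots, T_{i-1}$, at most $(l-1)t$ vertices of $S_i$ are already used, so at least $lt - (l-1)t = t$ remain available. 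The resulting disjoint $T_1, \ldots, T_l \subseteq V(H)$ of size $t$ then form the desired copy of $F[t]$: for any hyperedge $\{u_{i_1}, \ldots, u_{i_r}\}$ of $F$ and any choice $w_{i_j} \in T_{i_j}$, completing arbitrarily to a full $l$-tuple across the $T_i$'s yields a labeled copy of $F$ in $H$, which in particular contains $\{w_{i_1}, \ldots, w_{i_r}\}$ as a hyperedge of $H$.

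The main obstacle is really just cleanly invoking Erd\H{o}s's multipartite supersaturation theorem and bookkeeping the slack: the factor of $l$ in $s = lt$ is exactly what's needed to allow the greedy extraction of genuinely disjoint subsets from the overlapping multipartite classes. Once this accounting is set up, the rest of the proof is routine.
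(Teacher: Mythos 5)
The paper does not prove this lemma; it cites it (to Keevash's survey) as a standard fact, so there is no internal proof to compare against. Your argument is correct and is the standard one: encode ordered copies of $F$ as hyperedges of an $l$-partite $l$-uniform auxiliary hypergraph $\mathcal{A}$, apply Erd\H{o}s's multipartite K\H{o}v\'ari--S\'os--Tur\'an theorem to extract a complete $l$-partite $K_{s,\ldots,s}^{(l)}$, and read off a blow-up. One small remark: the greedy disjointification step (and hence the factor $l$ in $s=lt$) is actually unnecessary. If $v^{(i)}\in S_i$ and $v^{(j)}\in S_j$ for the same underlying vertex $v\in[n]$ and $i\neq j$, then any transversal through $v^{(i)}$ and $v^{(j)}$ would have to be a hyperedge of $\mathcal{A}$, i.e.\ an ordered copy of $F$ with a repeated vertex, which is impossible since labeled copies use distinct vertices. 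So the sets $S_1,\ldots,S_l$ returned by Erd\H{o}s's theorem are automatically pairwise disjoint when pulled back to $[n]$, and taking $s=t$ already suffices. Your extra slack is harmless, just not needed.
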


We also use the hypergraph removal lemma.
\begin{theorem}[R{\"o}dl, Nagle, Skokan, Schacht, and Kohayakawa~\cite{rodl2005hypergraph}, Gower~\cite{gowers2007hypergraph}] \label{thm::HypRemLem}
For fixed integers $l \ge r \ge 2$ and every $\mu > 0$, there exists $\zeta = \zeta(l,r,\mu) >0$ and $n_0 = n_0(l,r,\mu)$ such that the following holds. Let $F$ be an $r$-graph on $l$ vertices and $H$ be an $r$-graph on $n \ge n_0$ vertices. If $H$ contains at most $\zeta n^l$ copies of $F$, then one can delete $\mu n^r$ hyperedges of $H$ to make it $F$-free.
\end{theorem}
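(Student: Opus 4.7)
The plan is to derive this via the standard triple of tools: a hypergraph regularity lemma, a compatible counting/embedding lemma, and a cleaning step. This is the route of R\"odl, Nagle, Skokan, Schacht, and Kohayakawa and, independently, Gowers, and I would adopt it here, running the contrapositive: if no set of $\mu n^r$ hyperedges makes $H$ $F$-free, then $H$ contains more than $\zeta n^l$ copies of $F$.

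First, I would apply a suitable hypergraph regularity lemma to $H$. This outputs a constant-size partition of $V(H)$ into parts $U_1,\ldots,U_{t_0}$ of nearly equal size, together with auxiliary partitions of the $j$-subsets of $V(H)$ for each $2 \le j \le r-1$, such that all but a small proportion of $r$-tuples of parts are \emph{regular} in a layered sense strong enough to support counting. The regularity parameters, and hence $t_0$ and $n_0$, are chosen as functions of $\mu$, $l$, $r$ (with $t_0$ of tower type, which is acceptable since $l, r, \mu$ are fixed).

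Next, I would perform the cleaning step. Delete every hyperedge of $H$ lying in an $r$-tuple of parts that is either irregular, has density below a threshold $\eta = \eta(\mu, l, r)$, or involves a part of atypically small size. A direct accounting, using that the number of $r$-tuples of parts is bounded and choosing the regularity and density parameters small enough relative to $\mu$, shows that the total number of deleted hyperedges is at most $\mu n^r$. Let $H'$ denote the resulting subhypergraph. If $H'$ is $F$-free, we are done. Otherwise, fix any embedding $\phi$ of $F$ into $H'$; each hyperedge of $\phi(F)$ lies in a regular, dense, non-exceptional $r$-tuple of parts. The counting lemma for regular hypergraph systems then guarantees at least $c \cdot n^l$ copies of $F$ in $H'$ that match $\phi$ on part-assignments, for some $c = c(\eta, l, r, F) > 0$. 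Since every copy in $H'$ is a copy in $H$, setting $\zeta := c/2$ contradicts the hypothesis. Thus $H'$ is $F$-free, and the conclusion follows with this $\zeta$ and $n_0$.

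The main obstacle is the regularity/counting machinery itself. For $r = 2$, Szemer\'edi's regularity lemma together with the standard graph counting lemma suffice. For $r \ge 3$, however, regularity of edge densities over $r$-tuples of vertex parts is provably too weak to embed or count $r$-uniform hypergraphs, so one must build a hierarchical partition of $j$-subsets for all $2 \le j \le r-1$, introduce the correct notion of regularity relative to these lower-level partitions, and prove a counting lemma compatible with the layered structure. Calibrating the quantifier order between the regularity parameters at different levels, and verifying that the union of bad $r$-tuples of parts removes at most $\mu n^r$ hyperedges in total, is the technical heart of the proof.
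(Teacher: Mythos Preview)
The paper does not prove this theorem; it is quoted as a known result (with citations to R\"odl--Nagle--Skokan--Schacht--Kohayakawa and to Gowers) and used as a black box in the proof of Theorem~\ref{thm::o3Sta}. Your sketch is the standard route taken in those references---hypergraph regularity, cleaning, then a counting lemma---so there is nothing to compare against within this paper. If the intent was to supply a proof here, note that what you wrote is a high-level outline rather than a proof: the substantive work (defining the layered notion of regularity for $r\ge 3$, proving the regularity lemma, and proving a compatible counting lemma) is precisely the content of the cited papers and cannot be compressed into a paragraph.
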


\begin{proof}[Proof of Theorem~\ref{thm::o3Sta}]
Assume that $n$ is sufficiently large. 
Let $b$ be given by Theorem~\ref{thm::F5Sta} when applying it for $a = \eps/2$. Let $\delta := \min(b/2, \eps/2)$. Let $H$ be an $\ffp$-free $3$-graph on vertex set $[n]$ with at least $(\frac{1}{27} - \delta)n^3$ hyperedges and let $\pi$ be a $3$-partition of $[n]$ maximizing $|H_\pi|$.

Setting $\mu = \delta$, $l = 5$, and $r=3$ in Theorem~\ref{thm::HypRemLem}, we get that there exists $\zeta>0$ such that if an $n$-vertex $3$-graph $F$ contains at most $\zeta n^5$ copies of $F_5$, then we can delete at most $\delta n^3$ hyperedges to make it $F_5$-free. Setting $\alpha = \zeta$ in Theorem~\ref{lem::ManCopOfFThenBlo}, we get that if $H$ contains at least $\zeta n^5$ copies of $F_5$, then $H$ contains a copy of $F[t+1] \supseteq \ffp$, a contradiction to that $H$ is $\ffp$-free, so $H$ indeed contains at most $\zeta n^5$ copies of $F_5$. Therefore, $H$ contains a spanning $F_5$-free subhypergraph $H'$ with at least $|H|-\delta n^3 \ge (\frac{1}{27} - 2\delta)n^3\ge (\frac{1}{27}-b) n^3$ hyperedges. By Theorem~\ref{thm::F5Sta}, there exists a $3$-partition $\pi'$ of $[n]$ such that $|H' \sm H'_{\pi'}| \le \eps n^3/2$. Thus,
\bdm
|H \sm H_\pi|\le |H \sm H_{\pi'}| \le |H \sm H'| + |H' \sm H_{\pi'}| = |H \sm H'| + |H' \sm H'_{\pi'}| \le \eps n^3. \qedhere
\edm

\end{proof}

\begin{proof}[Proof of Claim~\ref{cla::o3staP}]
For (i), assume $|V_i| \le (\frac{1}{3} - 3\eps^{1/2})n$ for some $i\in[3]$. The function $x \to x((n-x)/2)^2$ is monotone increasing when $x < n/3$, and therefore 
$$
|H|\leq |V_1||V_2||V_3| +|H \sm H_\pi| \le |V_i| \left( \frac{n-|V_i|}{2}\right)^2 +\varepsilon n^3
\le \left(\frac{1}{27}- \frac{5}{4}\eps - \frac{27}{4}\eps^{3/2}\right)n^3 < |H|,
$$
a contradiction.
Thus $|V_i| > (\frac{1}{3} - 3\eps^{1/2})n $ for every $i \in [3]$. Since $|V_1|+ |V_2|+|V_3| = n$, we have $|V_i|< (\frac{1}{3} + 6\eps^{1/2})n$ for $i \in [3]$.

Assume that (ii) does not hold. Then $|\bar{H}_\pi| > \eps^{1/2}n \cdot 2\eps^{1/2}n^2 = 2\eps n^3$, and thus
\begin{align*}
|H| = |V_1||V_2||V_3| - |\bar{H}_\pi| + |H \sm H_\pi|
\le \left(\frac{n}{3}\right)^3 - 2\eps n^3 + \eps n^3
< |H|,
\end{align*}
a contradiction.

For (iii), assume for contradiction that $d_{i,j}(x) > 3\eps^{1/2} n^2$ for some $x\in V_i$. Then, by (ii), there exist $x'\in V_i$ and $y\in V_j$ such that $xx'y\in H$ and $d_k(x',y) > |V_k| - \eps^{1/2} n $. By our assumption that $\pi$ maximizes $|H_\pi|$, we have $d_{j,k}(x) \ge d_{i,j}(x) > 3\eps^{1/2} n^2$; otherwise moving $x$ to $V_k$ will strictly increase $|H_\pi|$. There exists a vertex $z \in V_k$ such that $x'yz \in H$ and $d_j(x,z) \ge t+2$; otherwise for every $z$ with $x'yz \in H$ we have $d_j(x,z) \le t+1$ and then
\begin{align*}
d_{j,k}(x) 
\le
(t+1) d_k(x',y) + |V_j| \left( |V_k| - d_k(x',y)  \right)  \le
(t+1)n +n \cdot \eps^{1/2} n < 3\eps^{1/2} n^2,
\end{align*}
a contradiction. Now choose distinct vertices $y_1,\ldots, y_{t+1} \in L_j(x,z) \sm \{y\}$. The hyperedges $\{x'yx, x'yz, xzy_1, \ldots, xzy_{t+1}\}$ form a copy of $\ffp$ in $H$, a contradiction.
\end{proof}

\section{Proof of Theorem~\ref{thm::Main}} \label{sec::Pro}
In Section~\ref{subsec::O2}, using the stability result for $\ffp$, Theorem~\ref{thm::o3Sta}, we show that if $H$ is an extremal $\ffp$-free hypergraph, then $|\bar{H}_\pi| = O(n^2)$ for some partition $\pi$. 
For a hypergraph $H$ on vertex set $[n]$, denote by $ \delta(H) \ce \min\{d(v):v \in [n]\}$ the \emph{minimum degree} of $H$.
In Sections~\ref{subsec::typ},~\ref{subsec::cle}, and~\ref{subsec::proMin}, we prove Theorem~\ref{thm::Main} with an extra assumption on the minimum degree, by distinguishing ``typical'' and ``non-typical'' vertices and using their respective properties. Finally, in Section~\ref{subsec::proMai}, we derive Theorem~\ref{thm::Main}.

\subsection{Upper bound on $\boldsymbol{|\bar{H}_\pi|}$} \label{subsec::O2}

\begin{lemma} \label{lem::Qn29edges}
Let $H$ be an $\ffp$-free $3$-graph on vertex set $[n]$. For every pair of vertices $x,x'\in [n]$ with $d(x,x') \ge t+1$ and disjoint subsets of vertices $S,T \subseteq [n]$, we have 
\bdm
d_{S,T}(x) + d_{S,T}(x') \le |S||T| + (t+3)n
\edm
and 
\bdm
d_{S,S}(x) + d_{S,S}(x')  \le \frac{1}{2} |S|^2 + (t+4)n.
\edm
\end{lemma}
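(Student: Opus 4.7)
The plan is to run inclusion--exclusion on the two link graphs, using $\ffp$-freeness only through a single forbidden-configuration lemma. Set $W := L(x,x')$, so $|W| = d(x,x') \ge t+1$. I would first write
\[
d_{S,T}(x) + d_{S,T}(x') \;=\; |L_{S,T}(x) \cup L_{S,T}(x')| \;+\; |L_{S,T}(x) \cap L_{S,T}(x')|,
\]
and bound the union term trivially by $|S||T|$, since $S$ and $T$ are disjoint.

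The decisive step is the following claim: \emph{every pair $\{y,z\}$ lying in both $L_{S,T}(x)$ and $L_{S,T}(x')$ must satisfy $|W \setminus \{y,z\}| \le t$.} To see this, if $w_1, \ldots, w_{t+1}$ were $t+1$ distinct vertices of $W \setminus \{y,z\}$, then the hyperedges $xyz$, $x'yz$, $xx'w_1, \ldots, xx'w_{t+1}$ would form a copy of $\ffp$ in $H$ on the distinct vertices $\{y, z, x, x', w_1, \ldots, w_{t+1}\}$ (mapping $1,2 \mapsto y, z$; $3, 4 \mapsto x, x'$; $4+i \mapsto w_i$), contradicting $\ffp$-freeness.

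With the claim in hand, I would split into three short cases by the value of $|W|$: if $|W| \ge t+3$ the intersection is empty; if $|W| = t+2$ every shared pair must lie inside $W$, contributing at most $\binom{t+2}{2}$ pairs; if $|W| = t+1$ every shared pair meets $W$, contributing at most $|W \cap S|\cdot|T| + |W \cap T|\cdot|S| \le (t+1)n$ pairs. Noting that $|W| \le n-2$ whenever the hypothesis $d(x,x') \ge t+1$ is satisfiable, all three cases are comfortably absorbed into $(t+3)n$, finishing the first inequality.

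For the second inequality I would run the identical argument with $T$ replaced by $S$: the only change is that the union bound becomes $|L_{S,S}(x) \cup L_{S,S}(x')| \le \binom{|S|}{2} \le \tfrac{1}{2}|S|^2$, while the intersection count becomes at most $|W \cap S|\cdot |S| \le (t+1)|S|$ in the critical case $|W|=t+1$, easily absorbed into the slightly looser slack $(t+4)n$. The main obstacle is really just identifying the right $\ffp$-embedding---once one realizes that any pair shared between $L(x)$ and $L(x')$ together with $t+1$ codegree witnesses already yields a copy of $\ffp$, the remainder is elementary counting.
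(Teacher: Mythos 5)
Your proof is correct and rests on exactly the same forbidden configuration as the paper's: if $yzx$ and $yzx'$ are both edges and $t+1$ further common codegree witnesses of $x,x'$ are available disjoint from $\{y,z\}$, these edges together form a copy of $\ffp$. The paper reaches the same bound a bit more directly---it fixes $t+1$ witnesses in $L(x,x')$ up front and counts the pairs disjoint from them, avoiding your three-way case split on $|W|$---but the underlying argument is the same.
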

\begin{proof}
Fix $t+1$ distinct vertices $v_1,\ldots, v_{t+1} \in L(x,x')$. There are at least $(|S|-t-3)(|T|-t-3)$ pairs $yz$ such that $y \in S$, $z \in T$, and $\{y,z\} \cap \{x,x',v_1,\ldots,v_{t+1}\} = \es$. For each such pair, $yzx\notin H$ or $yzx'\notin H$; otherwise the hyperedges $\{yzx$, $yzx'$, $xx'v_1$, \ldots, $xx'v_{t+1}\}$ form a copy of $\ffp$ in $H$. Then, 
$$
d_{S,T}(x) + d_{S,T}(x') \le 2|S||T| - (|S|-t-3)(|T|-t-3)  \le |S||T| + (t+3)n. 
$$
Similarly, there are at least $\b{|S| - t- 3}{2}$ pairs $yz$ such $y, z\in S$ and $\{y,z\} \cap \{x,x',v_1,\ldots, v_{t+1}\} = \es$. Again, for each such pair, $yzx\notin H$ or $yzx'\notin H$. We conclude
\begin{align*}
d_{S,S}(x) + d_{S,S}(x') 
&\le 2\b{|S|}{2} - \b{|S| - t- 3}{2}  \le \frac{1}{2}|S|^2 + (t+4)n. \qedhere
\end{align*}

\end{proof}

\begin{theorem}\label{thm::UpperHpi}
For every $t \ge 0$ and $\eps >0$, there exists $N>0$ such that if $n>N$ the following holds. Let $H$ be an $\ffp$-free $3$-graph on vertex set $[n]$ with $|H| \ge \sHo$ and $\pi$ be a partition of $[n]$ maximizing $|H_\pi|$. Then,
$
|\bar{H}_\pi| \le  \left( \frac{1}{9} + \eps \right)tn^2.
$
\end{theorem}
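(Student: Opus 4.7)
The plan is to apply the stability theorem for $\ffp$ to push $H$ close to the balanced tripartite structure, and then use Lemma~\ref{lem::Qn29edges} together with the $\pi$-maximality to refine the bound on missing crossings down to the correct order $O(tn^2)$.

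First, I would pick $\eps_0 = \eps_0(\eps, t)$ sufficiently small and apply Theorem~\ref{thm::o3Sta}. Since $|H| \ge \sHo \ge (\tfrac{1}{27}-\delta) n^3$ for large $n$ by Lemma~\ref{lem::LowerHo}, stability yields a partition $\pi = (V_1,V_2,V_3)$ maximizing $|H_\pi|$ with $|H \sm H_\pi| \le \eps_0 n^3$ and the three properties of Claim~\ref{cla::o3staP}: near-balanced parts, most cross-pairs having codegree at least $|V_k| - \eps_0^{1/2}n$ in the third part, and $d_{i,j}(x), d_{i,k}(x) \le 3\eps_0^{1/2}n^2$ for every $x \in V_i$. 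Next, Lemma~\ref{lem::balLar} and Proposition~\ref{pro::SteineralmostTuran3} give $\sHo \ge |V_1||V_2||V_3| + \sum_i \ex(|V_i|,\ftp) \ge |V_1||V_2||V_3| + \tfrac{t}{18}n^2 - O(n)$, so
\[
|\bar H_\pi| = |V_1||V_2||V_3| + |H \sm H_\pi| - |H| \le |H \sm H_\pi| - \tfrac{t}{18}n^2 + O(n),
\]
and it suffices to show $|H \sm H_\pi| \le (\tfrac{1}{6} + \eps)tn^2$.

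The non-crossing hyperedges decompose into inside-$V_i$ hyperedges (of count $E_{iii}$) and 2-in-1-in hyperedges (of count $E_{iij}$), and each count equals a sum of pair codegrees inside a single part: $3 E_{iii} = \sum_{\{x,x'\} \subseteq V_i} d_i(x,x')$ and $E_{iij} = \sum_{\{x,x'\} \subseteq V_i} d_j(x,x')$. I would partition pairs in $V_i$ into \emph{low codegree} ($d(x,x') \le t$) and \emph{high codegree} (HC, $d(x,x') \ge t+1$). Low-codegree pairs contribute at most $t$ each to any pair-codegree sum, giving the $\tfrac{t}{3}\binom{|V_i|}{2}$ term inside each part. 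For HC pairs, Lemma~\ref{lem::Qn29edges} with $S = V_j$, $T = V_k$ gives $d_{j,k}(x) + d_{j,k}(x') \le |V_j||V_k| + (t+3)n$; summing over HC pairs in $V_i$ via the identity $(|V_i|-1)|H_\pi| = \sum_{\{x,x'\} \subseteq V_i}(d_{j,k}(x)+d_{j,k}(x'))$ shows that the number of HC pairs in $V_i$ is at most $\tfrac{(|V_i|-1)|\bar H_\pi|}{|V_j||V_k| - (t+3)n} = O(|\bar H_\pi|/n)$, because each HC pair forces at least $|V_j||V_k| - O(tn) \approx n^2/9$ of crossing-deficit.

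The main obstacle is closing the resulting self-referential inequality for $|\bar H_\pi|$: a naive bound on the HC contributions ($d(x,x') \le n-2$ for HC pairs) produces an inequality of the form $|\bar H_\pi| \le \tfrac{t}{9}n^2 + c \, |\bar H_\pi|$ with $c>1$, which fails to resolve. To overcome this, I would separately bound $\sum_{i \ne j} E_{iij}$ using Claim~\ref{cla::o3staP}(iii), which controls $d_{i,j}(x)$ and hence $E_{iij}$ from above independently of $|\bar H_\pi|$, and then treat $\sum_i E_{iii}$ via the LC/HC dichotomy with the refined bound $d_i(x,x') \le |V_i|-2 \approx n/3$ on HC inside-codegrees. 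Combining these refinements with the HC-pair count $\sum_i H_i = O(|\bar H_\pi|/n)$ and choosing $\eps_0$ small enough in terms of $\eps$, the coefficient $c$ drops below $1$, the inequality resolves, and one obtains $|\bar H_\pi| \le (\tfrac{1}{9} + \eps) tn^2$.
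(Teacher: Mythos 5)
The reduction you set up is right: it does suffice to show $|H\sm H_\pi|\le(\tfrac16+\eps)tn^2$, and the split of the pair-codegree sums in each $V_i$ into low-codegree (codegree $\le t$) and high-codegree (HC) pairs is exactly the paper's starting point. Your count of HC pairs is also valid: summing the deficit from Lemma~\ref{lem::Qn29edges} over all HC pairs in $V_i$ and observing that every vertex lies in at most $|V_i|-1$ of them gives $|Q_i|\lesssim 3|\bar H_\pi|/n$. The problem is in closing the loop, and your two proposed fixes do not actually close it.

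Concretely, Claim~\ref{cla::o3staP}(iii) gives $d_{i,j}(x)\le 3\eps_0^{1/2}n^2$ for \emph{every} $x\in V_i$, so the resulting bound $\sum_{i\ne j}E_{iij}\le\tfrac12\sum_i|V_i|\cdot 3\eps_0^{1/2}n^2=\Theta(\eps_0^{1/2}n^3)$ is a $\Theta(n^3)$ bound, not $O(tn^2)$; shrinking $\eps_0$ cannot help because the target $tn^2$ is an order of magnitude smaller. The second fix (using $d_i(x,x')\le|V_i|-2$ for HC pairs) only improves the coefficient $c$ in $|\bar H_\pi|\le\tfrac t9n^2+c|\bar H_\pi|$ by a bounded factor, and one can check that any route that multiplies the deficit-based bound on $|Q_i|$ by a codegree cap of order $n$ yields $c\ge 1$ regardless of $\eps_0$: the constant $c$ comes from the ratio $\bigl(|V_j||V_k|\bigr)^{-1}\cdot|V_i|\cdot n$, not from the stability parameter. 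The self-referential inequality is genuinely unresolvable in this form because when you sum deficits over \emph{all} HC pairs, the deficits of overlapping pairs (pairs sharing a vertex) are counted against the same missing crossing hyperedges, so there is no way to turn $|Q_i|$ pairs each with deficit $\approx n^2/9$ into a lower bound $|Q_i|\cdot n^2/9$ on $|\bar H_\pi|$.

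The missing idea is to replace $Q_i$ by a \emph{maximum matching} $M_i$ of $Q_i$. Matched pairs are vertex-disjoint, so their deficits from Lemma~\ref{lem::Qn29edges} do not overlap, and one really does get $|\bar H_\pi|\gtrsim|M|\cdot\bigl(\tfrac19-o(1)\bigr)n^2$ (the paper's Claim~\ref{cla::lowbHpipex}). For the upper bound on $|H\sm H_\pi|$, the matching also supplies a small vertex cover $E_i$ of $Q_i$ with $|E_i|=2|M_i|$, so every hyperedge containing an HC pair contains a vertex of $E_i$; applying Claim~\ref{cla::o3staP}(iii) to just the $O(|M_i|)$ vertices of $E_i$ (rather than to all of $V_i$) gives the crucial bound $|H\sm H_\pi|\lesssim\tfrac t6n^2+|M|\cdot O(\beta^{1/2})n^2$ (Claim~\ref{cla::uppHsmHpi}). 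Now the variable that closes the loop is $|M|$ rather than $|\bar H_\pi|$, and the gain coefficient $\tfrac19 n^2$ dominates the cost coefficient $O(\beta^{1/2})n^2$, forcing $|M|\le t$ (Claim~\ref{cla::sizeOfM}). That integer bound, fed back into the $|H\sm H_\pi|$ estimate together with the extremality hypothesis, yields $|\bar H_\pi|\le(\tfrac19+\eps)tn^2$.
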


\begin{proof}
We can assume that $\eps$ is sufficiently small and $n$ is sufficiently large. Let $\beta>0$ be the real number such that $\eps = (2t+25)\beta^{1/2}$.
For $i\in [3]$, we define 
\begin{gather*}
P_i \ce \{ xx' : x,x'\in V_i,\,d(x,x') \le t\}
\quad
\textrm{and} 
\quad
Q_i \ce \{ xx' : x,x'\in V_i,\,d(x,x') \ge t+1\}.
\end{gather*}
Let $M_i$ be a maximum matching of $Q_i$, and let $E_i$ be the set of endpoints of the pairs in $M_i$, i.e., the vertices $x \in V_i$ such that there exists $x' \in V_i$ with $xx' \in M_i$. Note that $|E_i| = 2 |M_i|$ and every pair in $Q_i$ contains at least one point in $E_i$, since $M_i$ is a maximum matching of $Q_i$.

\begin{claim} \label{cla::QAsmall}
We have $|Q_i| \le (t+2) \beta^{1/2} n^2$ and $|M_i| \le (t+2) \beta^{1/2} n$ for $i\in [3]$.
\end{claim}
\begin{proof}
For $\{i,j,k\} = [3]$, define $S_{j,k}$ to be the set of pairs $yz$ such that $y \in V_j, z \in V_k$, and $d_i(y,z) \ge |V_i| - \beta^{1/2}n$.
By Claim~\ref{cla::o3staP} (i) and (ii),  
$$
|S_{j,k}| \ge |V_j||V_k| - 2\beta^{1/2}n^2 \geq\left(\left(\frac{1}{3} - 3\beta^{1/2}\right)n\right)^2 - 2\beta^{1/2}n^2 \ge \frac{n^2}{10}.
$$
Thus, we can choose $t+2$ pairwise disjoint pairs $y_1z_1,\ldots, y_{t+2}z_{t+2}$ from $S_{j,k}$. Define $U_i$ to be the set of vertices $x\in V_i$ such that $xy_az_a \notin H$ for some $a \in [t+2]$. Then, $|U_i|\leq (t+2)\beta^{1/2}n$.

For $x_1,x_2 \in V_i\setminus U_i$, we have $x_1x_2\notin Q_i$; otherwise there are $t+1$ distinct vertices $v_1, \ldots, v_{t+1} \in L(x_1,x_2)$ and there is some $a\in [t+2]$ such that $\{v_1,\ldots, v_{t+1}\} \cap \{y_a,z_a\} = \es$, and hence, the hyperedges $\{y_az_ax_1, y_az_ax_2, x_1x_2v_1, \ldots, x_1x_2v_{t+1}\}$ form a copy of $\ffp$ in $H$, a contradic\-tion. 
Therefore, every pair in $Q_i$ contains at least one vertex in $U_i$.  
We conclude $|Q_i| \le |U_i||V_i|\leq (t+2) \beta^{1/2} n^2$ and
$|M_i| \le |U_i| \le (t+2) \beta^{1/2} n$.
\end{proof}

\begin{claim} \label{cla::uppHsmHpi}
We have
$$
|H \sm H_\pi| \le \left(\frac{1}{6} + 6\beta^{1/2} + 54\beta\right) tn^2 + |M|(2t+16) \beta^{1/2} n^2 .
$$
\end{claim}
\begin{proof}
For $i\in [3]$, let $H_i$ be the set of hyperedges in $H$ containing at least two vertices in $V_i$. The number of hyperedges in $H_i$ that contain a pair in $P_i$ is most $t|P_i|$, because every pair in $P_i$ has codegree at most $t$ by definition. The number of hyperedges in $H_i$ that contain no pair in $P_i$ and also no vertex in $[n] \sm V_i$ is at most $|E_i||Q_i|$. The number of hyperedges in $H_i$ that contain no pair in $P_i$ but a vertex in $[n]\sm V_i$ is at most $|E_i|\cdot 2 \cdot 3\beta^{1/2}n^2$ by Claim~\ref{cla::o3staP} (iii). Adding up the number of hyperedges of these three types, we get
\begin{align*}
|H_i| 
&\le t\b{\left(\frac{1}{3}+ 6\beta^{1/2}\right)n}{2} + 2(t+2)\beta^{1/2}n^2 |M_i| + 12\beta^{1/2}n^2 |M_i| \\
&\le \left(\frac{1}{18} + 2\beta^{1/2} + 18\beta\right) tn^2 + (2t+16) \beta^{1/2} n^2 |M_i|,    
\end{align*}
where the first inequality follows from Claims~\ref{cla::o3staP} (i) and~\ref{cla::QAsmall}. Then,
\begin{displaymath}
|H \sm H_\pi| = \sum_{i=1}^3 |H_i| \le \left(\frac{1}{6} + 6\beta^{1/2} + 54\beta\right) tn^2 + |M|(2t+16) \beta^{1/2} n^2 . \qedhere
\end{displaymath}
\end{proof}

\begin{claim} \label{cla::lowbHpipex}
We have
$$
|\bar{H}_\pi| + \sum_{i=1}^3 \ex(|V_i|,\ftp)
\ge 
|M| \left( \frac{1}{9} - (2t+7) \beta^{1/2} \right)n^2
+\left( \frac{1}{18} - 2\beta^{1/2} \right) tn^2.
$$
\end{claim}
\begin{proof}
For $\{i,j,k\} = [3]$ and $x_1x_2 \in M_i$, by Lemma~\ref{lem::Qn29edges}, we have
$$
d_{V_j \sm E_j, V_k \sm E_k}(x_1) + d_{V_j \sm E_j, V_k \sm E_k}(x_2) 
\le |V_j \sm E_j||V_k \sm E_k| + (t+3)n
,
$$
so there are at least 
$$
|M_i| \left(2|V_j \sm E_j||V_k \sm E_k| - \left(|V_j \sm E_j||V_k \sm E_k| + (t+3)n\right)  \right)
$$
missing crossing hyperedges containing exactly one vertex in each of $E_i$, $V_j \sm E_j$, and $V_k \sm E_k$.
By Claims~\ref{cla::o3staP} (i) and~\ref{cla::QAsmall}, we have
$$
|\bar{H}_\pi|
\ge |M| \left(\left( \left(\frac{1}{3}-3\beta^{1/2} -2(t+2) \beta^{1/2} \right)n\right)^2 - (t+3)n\right)
\ge |M| \left( \frac{1}{9} - (2t+7) \beta^{1/2} \right)n^2
.
$$
By Claim~\ref{cla::o3staP} (i) and the lower bounds on $\ex(n,\ftp)$ in Proposition~\ref{pro::SteineralmostTuran3}, we have
\begin{align}
\label{ine:exF2t}
\sum_{i=1}^3 \ex(|V_i|,\ftp) \ge t\b{\left\lfloor \left(\frac{1}{3}- 3\beta^{1/2} \right)n\right\rfloor}{2} - \frac{n}{3} - \frac{t^2}{2}-9t \ge \left( \frac{1}{18} - 2\beta^{1/2} \right) tn^2.
\end{align}
Summing them up, we get the lower bound desired.
\end{proof}

\begin{claim} \label{cla::sizeOfM}
We have $|M| \le t$.
\end{claim}
\begin{proof}
Assume $|M|\geq t+1$. We get $|H \sm H_\pi| < |\bar{H}_\pi| + \sum_{i=1}^3 \ex(|V_i|,\ftp)$ by Claims~\ref{cla::uppHsmHpi} and~\ref{cla::lowbHpipex}. This implies $|H| < \sHop < \sHo$ by Lemma~\ref{lem::balLar}, a contradiction.
\end{proof}

Finally, Theorem~\ref{thm::UpperHpi} follows from a short calculation. Since $|H|\geq \sHop$, we have $|\bar{H}_\pi| \le |H \sm H_\pi| - \sum_{i=1}^3 \ex(|V_i|,\ftp)$ and thus by Claims~\ref{cla::uppHsmHpi}, \ref{cla::sizeOfM}, and \eqref{ine:exF2t}, we get 
\begin{align*}
|\bar{H}_\pi| &\leq \left(\frac{1}{6} + 6\beta^{1/2} + 54\beta\right) tn^2 + (2t+16) \beta^{1/2} tn^2- \left(\frac{1}{18} - 2\beta^{1/2}\right) tn^2 \\
&\leq   \left( \frac{1}{9} + (2t+25) \beta^{1/2} \right)tn^2 = \left( \frac{1}{9} + \eps \right)tn^2 
.
\qedhere
\end{align*}
\end{proof}

\begin{remark}
For $t=0$, Theorem~\ref{thm::UpperHpi} states that an extremal $F_5$-free $n$-vertex $3$-graph is complete tripartite for sufficiently large $n$. This proves Theorem~\ref{thm::Main} in the case $F_5^0 = F_5$.
\end{remark}

\subsection{Typical and non-typical vertices} \label{subsec::typ}
For every integer $t \ge 0$, let $c = c(t) \ce (t+100)^{-1000}$. For a $3$-graph $H$ on vertex set $[n]$ and a partition $\pi = (V_1,V_2,V_3)$ of $[n]$, define 
\begin{gather*}
B_i \ce \{x \in V_i: d_{j,k}(x) \le cn^2\}, \quad
A_i \ce V_i \sm B_i, \quad \textrm{for $\{i,j,k\} = [3]$}, \\
B \ce B_1\cup B_2 \cup B_3, \quad \text{and} \quad A \ce A_1 \cup A_2 \cup A_3.
\end{gather*}
We call vertices in $A$ \emph{typical} and vertices in $B$ \emph{non-typical}. In Sections~\ref{subsubsec::Nontyp} and~\ref{subsubsec::Typ}, we prove properties of non-typical and typical vertices, respectively.

\subsubsection{Non-typical vertices} \label{subsubsec::Nontyp}

\begin{lemma} \label{lem::B}
Let $t \ge 0$, $n$ be sufficiently large, and $H$ be an $\ffp$-free $3$-graph on vertex set $[n]$ with $|H| \ge \sHo$ and $\delta(H) \ge \frac{1}{9}n^2-n$. Further, let $\pi= (V_1,V_2,V_3)$ be a partition of $[n]$ maximizing $|H_\pi|$ and $c,B$ be defined as above. The following statements hold.
\begin{enumerate}[label=(\roman*)]
    \item $|B| \le t$.
    \item For every vertex $v \in B$, we have $d_{1,2}(v), d_{1,3}(v), d_{2,3}(v) \le cn^2$.
    \item For every pair of distinct vertices $v, v' \in B$, we have $d(v,v') \le t$.
\end{enumerate}
\end{lemma}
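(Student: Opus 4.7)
The plan is to prove the three parts in the order (ii), (iii), (i); each uses its predecessors together with the stability information provided by Theorems~\ref{thm::o3Sta} and~\ref{thm::UpperHpi}, both of which apply because $|H|\ge\sHo$. In particular, Claim~\ref{cla::o3staP}(i) gives $|V_i|=n/3+o(n)$. Part (ii) is immediate from the maximality of $\pi$: moving $v\in V_i$ to $V_j$ changes $|H_\pi|$ by $d_{i,k}(v)-d_{j,k}(v)$, which must be non-positive, so $d_{i,k}(v)\le d_{j,k}(v)\le cn^2$, and symmetrically $d_{i,j}(v)\le cn^2$.

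For (iii), I would suppose for contradiction that distinct $v_1,v_2\in B$ satisfy $d(v_1,v_2)\ge t+1$, and fix witnesses $w_1,\ldots,w_{t+1}\in L(v_1,v_2)$. The goal is to find a pair $\{a,b\}\in L(v_1)\cap L(v_2)$ disjoint from $\{v_1,v_2,w_1,\ldots,w_{t+1}\}$, because then the edges $\{a,b,v_1\},\{a,b,v_2\},\{v_1,v_2,w_1\},\ldots,\{v_1,v_2,w_{t+1}\}$ form a copy of $\ffp$, contradicting the hypothesis. By (ii), the total cross-degree of each $v_s$ is at most $3cn^2$, so the minimum-degree hypothesis yields $d_{1,1}(v_s)+d_{2,2}(v_s)+d_{3,3}(v_s)\ge n^2/9-O(cn^2+n)$. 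Since the total number of ``in-part'' pairs is at most $\binom{|V_1|}{2}+\binom{|V_2|}{2}+\binom{|V_3|}{2}\le n^2/6+o(n^2)$, inclusion-exclusion gives $|L(v_1)\cap L(v_2)|\ge n^2/18-o(n^2)$, which far exceeds the $(t+3)n$ pairs meeting the forbidden vertex set, so $\{a,b\}$ exists.

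For (i), I would assume $|B|\ge t+1$ and double-count missing crossing triples. Each $v\in B_i$ is missing at least $|V_j||V_k|-d_{j,k}(v)\ge n^2/9-o(n^2)$ crossings through it, hence
\[
\sum_{v\in B}\bigl(|V_{j_v}||V_{k_v}|-d_{j_v,k_v}(v)\bigr)\ge |B|\bigl(n^2/9-o(n^2)\bigr).
\]
The same sum equals $\sum_{e\in\bar{H}_\pi}|e\cap B|$; since every crossing triple has exactly one vertex per part, any $e\in\bar{H}_\pi$ with $|e\cap B|\ge 2$ places those $B$-vertices in two different parts, and such triples number at most $\bigl(|B_1||B_2|+|B_1||B_3|+|B_2||B_3|\bigr)\max_k|V_k|=O(|B|^2n)$. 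Consequently the sum is also bounded above by $|\bar{H}_\pi|+O(|B|^2n)$, and combining with Theorem~\ref{thm::UpperHpi}'s bound $|\bar{H}_\pi|\le(\tfrac{1}{9}+\varepsilon)tn^2$ yields $|B|(n^2/9-o(n^2))\le(\tfrac{1}{9}+\varepsilon)tn^2+O(|B|^2n)$, which forces the integer $|B|$ to satisfy $|B|\le t$ once $\varepsilon$ is chosen small enough and $n$ is sufficiently large. The main obstacle is the tight constant here: a cruder estimate using $|e\cap B|\le 3$ only yields $|B|\le 3t+o(1)$, and the sharpening to $|B|\le t$ depends crucially on the one-per-part structure of crossings, so that multiply-covered missing triples contribute only $O(|B|^2n)$ rather than a comparable share of $|\bar{H}_\pi|$ to the sum.
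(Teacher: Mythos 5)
Your proposal is essentially correct, and for parts (ii) and (iii) it is close to the paper's approach. Part (ii) is argued exactly as the paper does, from maximality of $\pi$. For part (iii), you inline the content of Lemma~\ref{lem::Qn29edges} rather than invoking it: by inclusion--exclusion on the in-part link graphs you get $\Omega(n^2)$ common pairs in $L(v_1)\cap L(v_2)$ inside parts, then avoid the $O(n)$ pairs meeting $\{v_1,v_2,w_1,\dots,w_{t+1}\}$ and build $\ffp$ directly. The paper instead applies Lemma~\ref{lem::Qn29edges} to conclude $\sum_i(d_{i,i}(v)+d_{i,i}(v'))\le n^2/5$, takes the smaller summand, and contradicts the minimum-degree hypothesis; both routes rest on the same observation that many shared in-part link pairs produce a copy of $\ffp$, so the difference is packaging rather than substance.

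Part (i) is the same double count as the paper's, but as written it has a logical ordering gap. The estimate $\sum_{e\in\bar H_\pi}|e\cap B|\le|\bar H_\pi|+O(|B|^2 n)$ only gives useful information once you already know $|B|=O(1)$: for $|B|=\Theta(n)$ the error term $O(|B|^2n)$ is $\Theta(n^3)$ and the inequality is vacuous, so it does not by itself ``force $|B|\le t$''. You mention the cruder bound $|e\cap B|\le 3$, giving $|B|\le 3t+o(1)$, only as a side remark at the end, but it must come first: from the crude bound one gets $|B|\le 3t$, and only then does $O(|B|^2n)=O(t^2n)=o(n^2)$, after which the refined count yields $|B|\le t$. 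This is precisely what the paper does by first establishing $|B_i|\le t$ for each $i$ and then restricting to missing crossing triples avoiding $B_j\cup B_k$. The fix is a one-line reordering, but you should present the crude bound as the necessary preliminary step rather than an afterthought.
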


\begin{proof}[Proof of (i)]
By Claim~\ref{cla::o3staP} (i), Theorem~\ref{thm::UpperHpi}, and the definition of $B_i$, we first have $|B_i| \le t$ for $i \in [3]$.
Let $\{i,j,k\} = [3]$. For every $x \in B_i$, the number of missing crossing hyperedges that contain $x$ and do not contain any vertex from $B_j \cup B_k$ is at least 
\begin{align*}
    (|V_j|-|B_j|)(|V_k|-|B_k|) - cn^2\geq \left(\frac{1}{3}-4c \right)^2n^2  - cn^2 \geq \left( \frac{1}{9}-4c\right)n^2.
\end{align*}
Thus, by Theorem~\ref{thm::UpperHpi}, we have
$
|B| \left(  \frac{1}{9}-4c\right)n^2 
\le |\bar{H}_\pi| \le 
\left(\frac{1}{9} + c\right) tn^2,
$
implying $|B| \le t$.
\end{proof}

\begin{proof}[Proof of (ii)]
Let $\{i,j,k\} = [3]$. By the definition of $B_i$, we have $d_{j,k}(x) \le cn^2$ for every $x \in B_i$. As $\pi$ maximizes $|H_\pi|$, we have $d_{i,j}(x), d_{i,k}(x) \le d_{j,k}(x) \le cn^2$.
\end{proof}

\begin{proof}[Proof of (iii)]
Assume for contradiction that there exist vertices $v,v' \in B$ such that $d(v,v') \ge t+1$. By Claim~\ref{cla::o3staP} (i) and Lemma~\ref{lem::Qn29edges}, we have
$$
\sum_{i=1}^3 \left( d_{i,i}(v) + d_{i,i}(v') \right) \le \sum_{i=1}^3 \left( \frac{1}{2}|V_i|^2 + (t+4)n \right) \le \frac{n^2}{5}.
$$
Thus, without loss of generality, we can assume that $\sum_{i=1}^3 d_{i,i}(v) \le \frac{1}{10}n^2$. By (ii), we have
$$
d(v) = \sum_{i=1}^3 d_{i,i}(v) + \sum_{1\le i < j \le 3} d_{i,j}(v) \le \frac{1}{10}n^2 + 3 cn^2 
< \frac{1}{9} n^2 - n,
$$
contradicting the minimum degree assumption.
\end{proof}

\subsubsection{Typical vertices} \label{subsubsec::Typ}
\begin{lemma} \label{lem::A}
Let $t \ge 0$, $n$ be sufficiently large, and $H$ be an $\ffp$-free $3$-graph on vertex set $[n]$ with $|H| \ge \sHo$ and
$\delta(H)\geq \frac{1}{9}n^2-n$. Further, let $\pi = (V_1,V_2,V_3)$ be a partition of $[n]$ maximizing $|H_\pi|$ and $c,A_1,A_2,A_3$ be defined as above. The following statements hold.
\begin{enumerate}[label=(\roman*)]
    \item Let $\{i,j,k\} = [3]$. For every vertex $x \in A_i$, we have
        \begin{enumerate}
            \item $d_{i,j}(x), d_{i,k}(x) \le 3cn^2$,
            \item $d_{i,i}(x) \le n^{3/2}$, 
            \item $d_{j,j}(x), d_{k,k}(x) \le c n^2$, and
            \item $d_{j,k}(x) \ge (\frac{1}{9} - 10 c)n^2$.
        \end{enumerate}
    \item For $i \in [3]$ and every pair of distinct vertices $x,x' \in A_i$, we have           $d(x,x') \le t$.
    \item For $i\neq j \in [3]$ and vertices $x\in A_i$, $y \in A_j$, if $d_{A_i}(x,y) + d_{A_j} (x,y) > 0$, then we have $d_{A_k}(x,y) \le 50c n$.
\end{enumerate}
\end{lemma}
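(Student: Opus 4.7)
The strategy uses Claim~\ref{cla::o3staP} (structural bounds on $\pi$), Lemma~\ref{lem::Qn29edges} (the codegree lemma), and Theorem~\ref{thm::UpperHpi} (the bound $|\bar H_\pi| \le (\tfrac{1}{9}+\varepsilon)tn^2$). I would prove (i)(a)--(d) in order, then deduce (ii) and (iii). Item (a) is immediate from Claim~\ref{cla::o3staP}(iii) upon choosing the stability parameter $\varepsilon$ so that $3\varepsilon^{1/2}\le 3c$, which is permissible since $|H|\ge \sHo\ge (\tfrac{1}{27}-\delta)n^3$ for arbitrarily small $\delta$.

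Items (b) and (c) share a common template. For (b), suppose for contradiction that $d_{i,i}(x)>n^{3/2}$; averaging $\sum_{y\in V_i} d_i(x,y)=2 d_{i,i}(x)$ yields $|Y|\ge n^{1/2}$ where $Y:=\{y\in V_i: d_i(x,y)\ge t+1\}$. For each $y\in Y$ we have $d(x,y)\ge t+1$, so Lemma~\ref{lem::Qn29edges} with $S=V_j, T=V_k$ combined with $d_{j,k}(x)>cn^2$ (from $x\in A_i$) gives $d_{j,k}(y)\le |V_j||V_k|-cn^2+O(n)$. Since $\sum_{y\in V_i} d_{j,k}(y)=|H_\pi|\ge |V_1||V_2||V_3|-|\bar H_\pi|$, combining with Theorem~\ref{thm::UpperHpi} forces $|Y|=O(t/c)$, a constant, contradicting $|Y|\ge n^{1/2}$ for large $n$. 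The proof of (c) is analogous with $Y':=\{y\in V_j: d_j(x,y)\ge t+1\}$ (of size $\ge 3cn$) and Lemma~\ref{lem::Qn29edges} applied with $S=V_i, T=V_k$, invoking $d_{i,k}(x)\le 3cn^2$ from (a) to deduce that the crossing degree at each $y\in Y'$ is at most $|V_i||V_k|-3cn^2+O(n)$; the same comparison with $|H_\pi|$ yields $|Y'|=O(t/c)$, a contradiction. Item (d) follows by subtraction from the min-degree hypothesis: $d_{j,k}(x)\ge d(x)-d_{i,i}(x)-d_{i,j}(x)-d_{i,k}(x)-d_{j,j}(x)-d_{k,k}(x)\ge (\tfrac{1}{9}n^2-n)-n^{3/2}-8cn^2\ge (\tfrac{1}{9}-10c)n^2$ for $n$ large.

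For (ii), if $d(x,x')\ge t+1$ for distinct $x,x'\in A_i$, Lemma~\ref{lem::Qn29edges} gives $d_{j,k}(x)+d_{j,k}(x')\le |V_j||V_k|+(t+3)n\le \tfrac{1}{9}n^2+O(\varepsilon^{1/2}n^2)$, contradicting the lower bound $2(\tfrac{1}{9}-10c)n^2$ from (d) for $c,\varepsilon$ small. The hardest item is (iii). WLOG there is $a\in A_i\cap L(x,y)$. By (d) applied to $a$, $\sum_{v\in V_k} d_j(a,v)=d_{j,k}(a)\ge (\tfrac{1}{9}-10c)n^2$; comparing this sum with $|V_j||V_k|\le \tfrac{1}{9}n^2+O(\varepsilon^{1/2}n^2)$ (from Claim~\ref{cla::o3staP}(i)) bounds $B':=\{v\in V_k: d_j(a,v)\le t+2\}$ by $|B'|\le 44cn$ via the averaging $|B'|(|V_j|-t-2)\le |V_j||V_k|-(\tfrac{1}{9}-10c)n^2$. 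The numerical constant $50$ in $50cn$ is calibrated to exceed $44$, so some $v\in (A_k\cap L(x,y))\setminus B'$ exists with $d(a,v)\ge d_j(a,v)\ge t+3$. Choosing $w_1,\ldots,w_{t+1}\in L(a,v)\setminus \{x,y\}$, the edges $\{x,y,a\},\{x,y,v\},\{a,v,w_1\},\ldots,\{a,v,w_{t+1}\}$ form a copy of $\ffp$ in $H$, contradicting the $\ffp$-freeness of $H$.
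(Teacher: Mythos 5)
Your items (i)(a), (i)(b), (i)(d), (ii), and (iii) are handled correctly, and (b), (ii), (iii) follow essentially the paper's arguments (your (iii) reaches a contradiction by exhibiting an explicit $\ffp$ rather than by the paper's counting inequality, but both are sound).

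The argument you give for (i)(c) has a genuine gap, and the claimed analogy with (b) does not hold. In (b), the auxiliary set $S_x$ (your $Y$) lies in the \emph{same} part $V_i$ as $x$; for $x'\in S_x$ with $d(x,x')\ge t+1$, Lemma~\ref{lem::Qn29edges} applied with $S=V_j$, $T=V_k$ together with the \emph{lower} bound $d_{j,k}(x)\ge cn^2$ (from $x\in A_i$) forces the crossing degree $d_{j,k}(x')$ to be small, and since $x'\in V_i$ this \emph{is} a crossing degree, so each such $x'$ contributes $\Omega(cn^2)$ missing crossing hyperedges. In (c), your set $Y'$ lies in $V_j$, so the crossing degree of $y\in Y'$ is $d_{i,k}(y)$. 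To bound it via Lemma~\ref{lem::Qn29edges} with $S=V_i$, $T=V_k$, you would need a \emph{lower} bound on $d_{i,k}(x)$; but (a) supplies only the \emph{upper} bound $d_{i,k}(x)\le 3cn^2$, which makes the inequality $d_{i,k}(y)\le |V_i||V_k|+(t+3)n-d_{i,k}(x)$ vacuous (if $d_{i,k}(x)=0$ it says nothing). Your deduction ``$d_{i,k}(x)\le 3cn^2$ implies $d_{i,k}(y)\le |V_i||V_k|-3cn^2+O(n)$'' reverses the sign. Nor can this be patched by switching to $S=V_j$, $T=V_k$ and using $d_{j,k}(x)\ge cn^2$: that bounds $d_{j,k}(y)$, but for $y\in V_j$ this is not a crossing degree and yields no missing crossing hyperedges. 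The paper's proof of (c) is genuinely different and more delicate: it introduces $W_j=\{y\in V_j:d_j(x,y)>n^{3/4}\}$ and $W_k$ analogously, shows (via an $\ffp$-finding subargument plus Theorem~\ref{thm::UpperHpi}) that the four degrees $d_{W_j,W_k}(x)$, $d_{W_j,\bar W_k}(x)$, $d_{\bar W_j,W_k}(x)$, $d_{W_j,\bar W_j}(x)$, etc.\ are all $O(n^{7/4})$, combines this with the lower bound on $d_{j,j}(x)+d_{j,k}(x)+d_{k,k}(x)$ coming from the minimum-degree hypothesis, (a), and (b), deduces that either $|W_j|+|W_k|\le 50cn$ or $|W_j|+|W_k|\ge(\tfrac{2}{3}-38c)n$, and rules out the latter using $d_{j,k}(x)>cn^2$. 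You need something of this kind; the codegree-averaging template from (b) does not transfer.
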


\begin{proof}[Proof of (i)]
(a) follows from Claim~\ref{cla::o3staP} (iii). For (b), assume for contradiction that there exists vertex $x \in A_i$ such that $d_{i,i}(x) > n^{3/2}$.
Let $S_x \ce \{x' \in V_i: d_i(x,x') \ge t+1\}$. Then
$$
  n^{3/2} < d_{i,i}(x) \le \frac{1}{2}\left(|S_x| |V_i| + |V_i \sm S_x|\cdot t\right) \le \frac{1}{2}\left(|S_x|n + tn\right),
$$
implying $|S_x| \ge n^{1/2}$. For every $x' \in S_x$, by Lemma~\ref{lem::Qn29edges}, we have $d_{j,k}(x) + d_{j,k}(x') \le |V_j||V_k| + (t+3)n$. By the definition of $A_i$, we have $d_{j,k}(x) \ge cn^2$ and hence $d_{j,k}(x') \le |V_j||V_k| + (t+3)n - cn^2\leq |V_j||V_k|  - \frac{1}{2}cn^2$. Thus,
\begin{align*}
    |\bar{H}_\pi|= \sum_{x'\in V_i} \left(|V_j||V_k|-d_{j,k}(x')\right) \geq \sum_{x'\in S_x} \left(|V_j||V_k|-d_{j,k}(x')\right) \geq |S_x| \cdot  \frac{1}{2}cn^2\geq \frac{1}{2}cn^{\frac{5}{2}},
\end{align*}
contradicting Theorem~\ref{thm::UpperHpi}. Next, we prove (c).
For every $x \in V_i$, define $W_j \ce \{y \in V_j: d_j(x,y) > n^{3/4}\}$, $W_k \ce \{z \in V_k: d_k(x,z) > n^{3/4}\}$, $\widebar{W}_j \ce V_j \sm W_j$, and $\widebar{W}_k \ce V_k \sm W_k$. By the definitions of $W_j$ and $W_k$, we have
$$
d_{W_j,\widebar{W}_j}(x), d_{\widebar{W}_j,\widebar{W}_j}(x), d_{W_k,\widebar{W}_k}(x), d_{\widebar{W}_k,\widebar{W}_k}(x)
\le n^{3/4} \cdot n\leq n^{7/4}.
$$
Further, for every $y\in W_j$, we have $d_k(x,y) \le n^{3/4}$; otherwise either there exist $y'\in L_j(x,y)$ and $z\in L_k(x,y)$ satisfying $d_i(y',z)\geq t+2$, and we find a copy of $\ffp$ in $H$; or for every $y'\in L_j(x,y)$ and $z\in L_k(x,y)$, it holds that $d_i(y',z) < t+2$, in which case 
\bdm
 |\bar{H}_\pi| \ge d_j(x,y)d_k(x,y)(|V_i|-t-2)\geq n^{3/2}(|V_i|-t-2),
\edm
contradicting Theorem~\ref{thm::UpperHpi} by Claim~\ref{cla::o3staP} (i). We conclude $d_{W_j, W_k}(x),d_{W_j,\widebar{W}_k}(x) 
\le n^{7/4}$, and by a similar argument also $d_{\widebar{W}_j, W_k}(x)\le n^{7/4}$. Therefore,
\begin{equation}
d_{j,j}(x) + d_{j,k}(x) + d_{k,k}(x) \le \b{|W_j|}{2} + (|V_j| - |W_j|)(|V_k| - |W_k|) + \b{|W_k|}{2} + 7n^{7/4}. \label{equ::1}
\end{equation}
On the other hand, by (a), (b), and the minimum degree assumption, we have
\begin{equation}
d_{j,j}(x) + d_{j,k}(x) + d_{k,k}(x) \ge d(x) - d_{i,i}(x) - d_{i,j}(x) - d_{i,k}(x) \ge \left(\frac{1}{9} - 7 c \right)n^2. \label{equ::2}
\end{equation}
Combining \eqref{equ::1} and \eqref{equ::2} and using Claim~\ref{cla::o3staP} (i), we get 
$$
\frac{1}{2}|W_j|^2 + \left( \left(\frac{1}{3} + 6 c\right)n -|W_j| \right) \left( \left(\frac{1}{3} + 6 c\right)n -|W_k| \right) + \frac{1}{2}|W_k|^2
\ge
\left(\frac{1}{9} - 8 c \right)n^2,
$$
which implies $|W_j| + |W_k| \le 50 cn$ or $|W_j| + |W_k| \ge (\frac{2}{3}-38c)n$. If $|W_j| + |W_k| \ge (\frac{2}{3}-38c)n$, then by the definition of $A_i$, we have
\begin{align*}
cn^2 &< d_{j,k}(x) 
=
d_{\widebar{W}_j, \widebar{W}_k}(x) + d_{W_j, \widebar{W}_k}(x) + d_{\widebar{W}_j,W_k}(x) + d_{W_j,W_k}(x) \\
&\le 
\left( \left(\frac{1}{3} + 6 c\right)n -|W_j| \right) \left( \left(\frac{1}{3} + 6 c\right)n -|W_k| \right) + 3n^{7/4}\\
&\le
\left( 25cn \right)^2 + 3n^{7/4} < cn^2
,
\end{align*}
a contradiction. We conclude $|W_j|, |W_k| \le 50 cn$ and thus
\bdm
d_{j,j}(x) = d_{W_j,W_j}(x) + d_{W_j,\widebar{W}_j}(x) + d_{\widebar{W}_j,\widebar{W}_j}(x) \le \b{50cn}{2} + 2n^{7/4} \le cn^2.
\edm
Similarly, $d_{k,k}(x) \le cn^2$. Finally, (d) follows by the minimum degree assumption, (a), (b), and (c). 
\end{proof}

\begin{proof}[Proof of (ii)]
If $x,x' \in A_i$ are distinct vertices such that $d(x,x') \ge t+1$, then by Lemma~\ref{lem::Qn29edges}, $d_{j,k}(x) + d_{j,k}(x') \le |V_j||V_k| + (t+3)n \le \frac{1}{8} n^2$. This contradicts (d) in (i).
\end{proof}

\begin{proof}[Proof of (iii)]
Fix vertex $v \in A_i \cup A_j$ such that $xyv \in H$. Without loss of generality, we assume $v \in A_i$. By (i) and Lemma~\ref{lem::B} (i), we have 
\begin{align}
\label{ineq:dakxy1}
d_{A_j,A_k} (v) \ge d_{j,k}(v) - d_{B_j, V_k}(v) - d_{V_j, B_k}(x) \ge  d_{j,k}(v) - t|V_i| - t|V_j| \ge \left(\frac{1}{9} - 11 c\right)n^2.
\end{align}
For every $z \in L_{A_k} (x,y)$, we have $d_{A_j}(v,z)\le t+1$; otherwise there exist $y_1,\ldots, y_{t+1}\in A_j \setminus \{y\}$ and then hyperedges $\{xyv,xyz,vzy_1,\ldots, vzy_{t+1}\}$ form a copy of $\ffp$ in $H$. Therefore,
\begin{align}
\label{ineq:dakxy2}
d_{A_j,A_k} (v) &\le (t+1) d_{A_k}(x,y) + |A_j| \left(|A_k| - d_{A_k}(x,y)\right).
\end{align}
Combining \eqref{ineq:dakxy1} with \eqref{ineq:dakxy2} and Claim~\ref{cla::o3staP} (i), we get $d_{A_k}(x,y) \le 50c n$.
\end{proof}

\subsection{All vertices are typical} \label{subsec::cle}
\begin{theorem} \label{thm::Bempty}
Let $t \ge 0$, $n$ be sufficiently large, and $H$ be an $\ffp$-free $3$-graph on vertex set $[n]$ with $|H| \ge \sHo$ and $\delta(H) \geq \frac{1}{9}n^2-n$. Further, let $\pi = (V_1,V_2,V_3)$ be a partition of $[n]$ maximizing $|H_\pi|$, and let $B$ be defined as in Section~\ref{subsec::typ}. Then, $|B| = 0$.
\end{theorem}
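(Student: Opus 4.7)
Assume for contradiction that $B \neq \varnothing$; pick $v \in B$, WLOG $v \in V_1$. By Lemma~\ref{lem::B}(ii) we have $d_{1,2}(v), d_{1,3}(v), d_{2,3}(v) \le cn^2$, which together with the minimum-degree hypothesis $d(v) \ge \tfrac{1}{9}n^2 - n$ forces
\[
d_{1,1}(v) + d_{2,2}(v) + d_{3,3}(v) \;\ge\; \tfrac{1}{9}n^2 - 3cn^2 - n. \quad (\star)
\]
The plan is to contradict $(\star)$ by showing the left-hand side is in fact much smaller, mirroring the structural arguments used to prove Lemma~\ref{lem::A}(i) for typical vertices.

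The first step is to bound $d_{2,2}(v)$ and $d_{3,3}(v)$ via the $W$-set analysis from the proof of Lemma~\ref{lem::A}(i)(c). Setting $W_2 \ce \{y \in V_2 : d_2(v,y) > n^{3/4}\}$ and $W_3$ analogously, the $\ffp$-construction used there---a pair $y' \in L_2(v,y)$, $z \in L_3(v,y)$, and $t+1$ witnesses $w \in L_1(y',z)$---together with the upper bound on $|\bar{H}_\pi|$ from Theorem~\ref{thm::UpperHpi} forces $d_3(v,y) \le n^{3/4}$ for every $y \in W_2$. This yields
\[
d_{2,2}(v) + d_{2,3}(v) + d_{3,3}(v) \;\le\; \binom{|W_2|}{2} + (|V_2|-|W_2|)(|V_3|-|W_3|) + \binom{|W_3|}{2} + 7n^{7/4}.
\]
When $|W_2| + |W_3| \le 50cn$ (the \emph{small} case), one reads off $d_{2,2}(v), d_{3,3}(v) \le cn^2$ directly. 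A parallel argument controls $d_{1,1}(v)$ by splitting pairs in $V_1$ according to $A_1$ vs.\ $B_1$: using $|B_1| \le t$ from Lemma~\ref{lem::B}(i) to handle the pairs involving $B_1$, and using Lemma~\ref{lem::A}(ii) together with the global count $\sum_{w \in V_1} d_{A_1,A_1}(w) \le t\binom{|A_1|}{2}$ to bound the contribution from $A_1$-pairs, yields $d_{1,1}(v) \le O(cn^2) + O(tn)$. Assembling everything gives $d(v) \le O(cn^2) + O(tn) < \tfrac{1}{9}n^2 - n$, contradicting $(\star)$.

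The main obstacle is that, unlike for $v \in A$, the minimum-degree hypothesis only implies $d_{2,2}(v) + d_{2,3}(v) + d_{3,3}(v) \ge \tfrac{1}{18}n^2 - O(cn^2)$ (the $d_{1,1}(v)$ term is no longer controlled by Lemma~\ref{lem::A}(i)(b) and may be as large as $\tfrac{n^2}{18}$). Consequently the dichotomy $|W_2| + |W_3| \le 50cn$ versus $|W_2|+|W_3| \ge (\tfrac{2}{3}-38c)n$ is not immediate; the ``large'' case allows $d_{2,2}(v), d_{3,3}(v)$ to each be as large as $\approx n^2/18$ and is consistent with $v \in B$ (it only refines $d_{2,3}(v) \le O(c^2 n^2)$). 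To rule out the large case, I would apply Lemma~\ref{lem::Qn29edges} to the many pairs $\{v,y\}$ with $y \in W_2 \cap A_2$ and $d(v,y) > n^{3/4} > t+1$, using the lower bound $d_{1,3}(y) \ge (\tfrac{1}{9}-10c)n^2$ from Lemma~\ref{lem::A}(i)(d) to sharpen the cross-degree bounds on $v$, and then iterate the $W$-analysis for the link of $v$ within $V_1$ (with a modified $\ffp$-construction using a triangle in $V_1$ together with a $2$-$1$ mixed edge into $V_3$). Carrying out this iterative tightening with constants strong enough to contradict $(\star)$ is the delicate technical step.
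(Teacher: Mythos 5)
Your strategy is to contradict the minimum-degree hypothesis by showing that any $v \in B$ has small degree, i.e.\ that $d_{1,1}(v) + d_{2,2}(v) + d_{3,3}(v)$ is much smaller than $\tfrac{1}{9}n^2$. This cannot work: the truth is the opposite. The paper's key structural claim (Claim~\ref{cla::Bfull} in the actual proof) is that for every $v \in B$ one has $d_{A_i,A_i}(v) \ge \binom{|A_i|}{2} - c^{1/3}n^2$ for \emph{every} $i \in [3]$ — a non-typical vertex is necessarily adjacent to \emph{almost all} pairs inside each part, because otherwise the global count $|H| \ge \sHo$ would already fail. Consequently $d_{1,1}(v)+d_{2,2}(v)+d_{3,3}(v) \gtrsim \tfrac{1}{6}n^2$, comfortably above the threshold $(\star)$, and no amount of tightening of the $W$-set analysis can reverse that. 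In particular, your intermediate assertion $d_{1,1}(v) \le O(cn^2) + O(tn)$ is false: the inequality $\sum_{w\in V_1} d_{A_1,A_1}(w) \le t\binom{|A_1|}{2}$ bounds an average but says nothing about a single vertex $v$, and $d_{A_1,A_1}(v)$ really is of order $n^2/18$.

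The deeper reason your approach is doomed is that the min-degree bound is an assumption on $H$, so vertices of $B$ automatically satisfy it and cannot be ruled out that way; the contradiction has to come from the hyperedge count $|H|\ge\sHo$. The $W$-set dichotomy you quote from Lemma~\ref{lem::A}(i)(c) relies on having the lower bound $d_{j,k}(x) \ge cn^2$ to rule out the ``large'' branch, which is exactly the bound that fails for $v\in B$ (Lemma~\ref{lem::B}(ii) gives the opposite inequality). Your ``main obstacle'' paragraph correctly identifies this, but the proposed fix --- iterating the $W$-analysis to rule out the large case --- tries to prove a false statement. The paper instead embraces the large case: it shows a vertex of $B$ is densely connected inside each part, deduces (via an $\ffp$-avoidance argument, Claims~\ref{cla:S_vsmall} and~\ref{cla::BfHii}) that the parts then cannot support many internal hyperedges $H_{iii}$, and combines this shortage with the missing crossing hyperedges caused by $B$ and a comparison against $\sHtp$ (Lemma~\ref{lem::HoHt}) to conclude $|H| < \sHo$ for $1 \le |B| \le t$. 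This is a genuinely different and necessary mechanism; the min-degree contradiction you aim for does not exist.
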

Let $c$, $A_1$, $A_2$, $A_3$, $A$, $B_1$, $B_2$, and $B_3$ be defined as in Section~\ref{subsec::typ}. We write $\varpi$ for the $3$-partition with parts $(A_1,A_2,A_3)$, and let $A_4:= B$. For $i,j,k\in [4]$, define $H_{ijk}$ to be the set of hyperedges $xyz\in H$ where $x \in A_i$, $y \in A_j$, and $z \in A_k$. Further, define $H_{bad}\ce \bigcup_{i,j\in [3], \ i \neq j } H_{iij}$.
Then,
\begin{equation}
|H| = |H_{123}| + \sum_{i=1}^3 |H_{iii}| + |H_{bad}|  +  \sum_{1\le i \leq j \le 3} |H_{ij4}| + \sum_{i=1}^4|H_{i44}|. \label{equ::H}
\end{equation}
Recall that we defined $H_\varpi = H_{123}$ and $\bar{H}_\varpi = K_\varpi \sm H_\varpi$ in Section~\ref{subsec::Not}. Note that $H_\varpi \subseteq H_\pi$ and $\bar{H}_\varpi \subseteq \bar{H}_\pi$. For $i \in [3]$, by Claim~\ref{cla::o3staP} (i) and Lemma~\ref{lem::B} (i), we have 
\begin{align}
\label{ineq: classsizes}
\left( \frac{1}{3} - 3c \right) n -t \le \left( \frac{1}{3} - 3c \right) n - |B| \leq |A_i|  \leq |V_i| \le  \left(\frac{1}{3}+6c \right)n.
\end{align}
For $i\neq j \in [3]$, let $G_{ij}$ be an auxiliary bipartite graph on $A_i \cup A_j$, where there is an edge between $x \in A_i$ and $y\in A_j$ iff there exists a vertex $v \in (A_i \cup A_j)$ such that $xyv \in H$. Denote by $d_{G_{ij}}(x)$ the degree of vertex $x$ in $G_{ij}$. 

\begin{proof}[Proof of Theorem~\ref{thm::Bempty}]
The proof is split into several claims. 

\begin{claim} \label{cla::Hbad}
 $|H_{bad}| \le 12tc^{1/2}n^2$.
\end{claim}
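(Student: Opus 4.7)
The plan is to bound $|H_{iij}|$ separately for each ordered pair $i \ne j$ in $[3]$, by applying Lemma~\ref{lem::A}(iii) at two different scales. Fix such $(i,j)$, let $k$ be the third index, and set
\[
M_{ij} := \bigl\{(x,z) \in A_i \times A_j : d_{A_i}(x,z) + d_{A_j}(x,z) \ge 1\bigr\}.
\]
Each edge $\{x,y,z\} \in H_{iij}$ (with $x,y \in A_i$ and $z \in A_j$) contributes two pairs $(x,z),(y,z) \in M_{ij}$, giving $2|H_{iij}| = \sum_{(x,z) \in M_{ij}} d_{A_i}(x,z)$. It therefore suffices to bound both $|M_{ij}|$ and $\max_{(x,z) \in M_{ij}} d_{A_i}(x,z)$.

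For every $(x,z) \in M_{ij}$, Lemma~\ref{lem::A}(iii) yields $d_{A_k}(x,z) \le 50cn$, so the missing $\varpi$-crossing count $|A_k| - d_{A_k}(x,z)$ at $(x,z)$ is at least $n/4$ (using $|A_k| \ge (\tfrac{1}{3} - 3c)n - t$ from~\eqref{ineq: classsizes}). Summing over $M_{ij}$ gives $|M_{ij}| \cdot n/4 \le |\bar{H}_\varpi| \le |\bar{H}_\pi|$, and Theorem~\ref{thm::UpperHpi} (with a fixed small $\eps$) implies $|M_{ij}| = O(tn)$.

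Next, fix $(x,z) \in M_{ij}$, set $D := d_{A_i}(x,z)$, and list the $D$ vertices $y \in L_{A_i}(x,z)$. Each pair $(y,z)$ again lies in $M_{ij}$ (witnessed by $x$), so Lemma~\ref{lem::A}(iii) also gives $|A_k| - d_{A_k}(y,z) \ge n/4$ for every such $y$. Summing over these $D$ vertices and then extending to all $y' \in A_i$,
\[
\frac{Dn}{4} \le \sum_{y' \in A_i} \bigl(|A_k| - d_{A_k}(y',z)\bigr) = |A_i||A_k| - d_{A_i,A_k}(z).
\]
Lemma~\ref{lem::A}(i)(d) gives $d_{V_i,V_k}(z) \ge (\tfrac{1}{9} - 10c)n^2$, and $d_{A_i,A_k}(z)$ differs from $d_{V_i,V_k}(z)$ by at most $O(tn)$ since $|B| \le t$ by Lemma~\ref{lem::B}(i). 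Together with $|A_i||A_k| \le (\tfrac{1}{9} + O(c))n^2$ from~\eqref{ineq: classsizes}, the right-hand side is $O(cn^2)$, so $D \le O(cn)$.

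Multiplying these two bounds yields $|H_{iij}| \le \tfrac{1}{2}|M_{ij}| D = O(tcn^2)$, which is at most $2tc^{1/2}n^2$ once the hidden absolute constant is absorbed into one factor of $c^{1/2}$---a price we can pay since $c = (t+100)^{-1000}$ is far smaller than any such constant. Summing over the six ordered pairs $(i,j)$ with $i \ne j$ produces $|H_{bad}| \le 12tc^{1/2}n^2$. The main obstacle is bookkeeping: one must verify that Lemma~\ref{lem::A}(iii) applies at both scales (which it does, since in each case the relevant pair is a cross-pair in $A_i \times A_j$ with strictly positive same-part codegree) and that the numerical constants combine correctly, but the tiny choice of $c$ swallows all slack.
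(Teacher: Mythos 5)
Your proof is correct but takes a genuinely different route from the paper's. The paper, bounding $|H_{ijj}|$ by partitioning $A_i$ into vertices $S$ of high $G_{ij}$-degree (at most $tc^{-1/2}$ of them, each contributing at most $d_{j,j}(x)\le cn^2$ hyperedges by Lemma~\ref{lem::A}(i)(c)) and vertices $T$ of low $G_{ij}$-degree (contributing $\sum_{x\in T}\binom{d_{G_{ij}}(x)}{2}\le c^{1/2}n|G_{ij}|$), never controls the same-part codegrees $d_{A_i}(x,z)$ pointwise. You instead prove the uniform bound $\max_{(x,z)\in M_{ij}} d_{A_i}(x,z) = O(cn)$ directly, by noting that each $y\in L_{A_i}(x,z)$ gives a pair $(y,z)\in M_{ij}$ with at least $n/4$ missing $\varpi$-crossing triples, summing these, and comparing against the cross-degree lower bound Lemma~\ref{lem::A}(i)(d) applied to $z$ (after passing from $V$ to $A$ via $|B|\le t$). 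Both approaches share the estimate $|M_{ij}|=|G_{ij}|=O(tn)$ from Lemma~\ref{lem::A}(iii) together with Theorem~\ref{thm::UpperHpi}, and your arithmetic is fine: $\tfrac12|M_{ij}|\cdot\max d_{A_i}(x,z)$ is on the order of $tcn^2$, which is well below $2tc^{1/2}n^2$ because $c=(t+100)^{-1000}$; summing over the six ordered pairs gives $12tc^{1/2}n^2$. In fact your argument is quantitatively sharper than the paper's, yielding $|H_{bad}|=O(tcn^2)$ rather than $O(tc^{1/2}n^2)$.
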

\begin{proof}
We bound $|H_{ijj}|$ for every $i\neq j \in [3]$.
For every $xy \in G_{ij}$, by Lemma~\ref{lem::A} (iii), we have $d_{A_k}(x,y) \le 50c n$. By Theorem~\ref{thm::UpperHpi}, we have
$$
|G_{ij}| \left(|A_k| - 50 c\right) \le |\bar{H}_\varpi| \le |\bar{H}_\pi| \le \left(\frac{1}{9} + c\right) tn^2,
$$
implying $|G_{ij}| \le tn$ by \eqref{ineq: classsizes}. We partition $A_i=S\cup T$, where $S$ is the set of vertices $x \in A_i$ satisfying $d_{G_{ij}}(x) > c^{1/2}n$ and $T:=A_i \setminus S$. 
Since $|G_{ij}| \le tn$, we have $|S| \le tc^{-1/2}$. By Lemma~\ref{lem::A} (i), we have $d_{A_j,A_j}(x) \le d_{j,j}(x) \le cn^2$ for every $x \in S$. Therefore, the number of hyperedges in $H_{ijj}$ containing a vertex in $S$ is at most $tc^{1/2} n^2$. The number of hyperedges in $H_{ijj}$ containing a vertex in $T$ is at most
    $$
    \sum_{x \in T} \binom{d_{G_{ij}}(x)}{2}\leq \sum_{x \in T} d_{G_{ij}}^2(x) 
    \le c^{1/2}n \sum_{x \in T} d_{G_{ij}}(x)  \leq c^{1/2}n |G_{ij}|\leq  t c^{1/2}n^2.
    $$
We conclude $|H_{ijj}| \le 2t c^{1/2} n^2$ and thus $|H_{bad}| \le 12t c^{1/2} n^2$.
\end{proof}

\begin{claim} \label{cla::Bfull}
For every vertex $v \in B$, we have $d_{A_i,A_i}(v) \ge \b{|A_i|}{2} - c^{1/3} n^2$ for $i \in [3]$.
\end{claim}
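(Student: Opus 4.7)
The plan is to count $\sum_{i=1}^3 |H_{ii4}| = \sum_{v\in B}\sum_{i=1}^3 d_{A_i,A_i}(v)$ tightly enough to force each individual $d_{A_i,A_i}(v)$ within $c^{1/3}n^2$ of its maximum $\binom{|A_i|}{2}$. The key tool is Lemma~\ref{lem::A}(ii), which asserts $d(x,y) \le t$ for every pair $xy \in \binom{A_i}{2}$. Counting triples through such pairs, using that $A_i$ and $B$ are disjoint, yields the crucial inequality
$$3|H_{iii}| + |H_{ii4}| \;=\; \sum_{xy \in \binom{A_i}{2}} d_{A_i \cup B}(x,y) \;\le\; t\binom{|A_i|}{2},$$
so $|H_{iii}| \le \tfrac{t}{3}\binom{|A_i|}{2} - \tfrac{1}{3}|H_{ii4}|$. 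Plugging this into the decomposition~\eqref{equ::H} gives
$$|H| \le |H_{123}| + \tfrac{t}{3}\sum_{i=1}^3 \binom{|A_i|}{2} + \tfrac{2}{3}\sum_{i=1}^3 |H_{ii4}| + |H_{bad}| + \sum_{1\le i<j\le 3}|H_{ij4}| + \sum_{i=1}^3 |H_{i44}| + |H_{444}|.$$

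Next, I would estimate each auxiliary term. By AM--GM, $|H_{123}| \le |A_1||A_2||A_3| \le ((n-|B|)/3)^3 = \tfrac{n^3}{27} - \tfrac{|B|n^2}{9} + O(|B|^2 n)$; Claim~\ref{cla::o3staP}(i) gives $\sum_i \binom{|A_i|}{2} \le \tfrac{n^2}{6}(1+O(c))$; Claim~\ref{cla::Hbad} yields $|H_{bad}| \le 12tc^{1/2}n^2$; Lemma~\ref{lem::B}(ii) implies $\sum_{i<j}|H_{ij4}| \le 3c|B|n^2 \le 3ctn^2$; and the remaining two sums are trivially $O(t^2 n)$. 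Combined with the lower bound $|H| \ge \sHo \ge \tfrac{n^3}{27} + \tfrac{tn^2}{18} - O(n)$ from Lemma~\ref{lem::LowerHo}, the main $\tfrac{n^3}{27}$ and $\tfrac{tn^2}{18}$ terms cancel, and after multiplying through by $3/2$ I expect to obtain
$$\sum_{i=1}^3 |H_{ii4}| \;\ge\; \tfrac{|B|n^2}{6} - O(tc^{1/2}n^2).$$

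To pass from the aggregate estimate to the per-vertex bound, I would use that $d_{A_i,A_i}(v) \le \binom{|A_i|}{2}$ for every $v$ and $i$. Then
$$\sum_{v\in B}\left(\sum_{i=1}^3 \binom{|A_i|}{2} - \sum_{i=1}^3 d_{A_i,A_i}(v)\right) \le |B|\cdot \tfrac{n^2}{6}(1+O(c)) - \tfrac{|B|n^2}{6} + O(tc^{1/2}n^2) = O(tc^{1/2}n^2).$$
Every summand is nonnegative, so each individual $v\in B$ satisfies $\sum_i\binom{|A_i|}{2} - \sum_i d_{A_i,A_i}(v) \le O(tc^{1/2}n^2)$. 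Invoking $d_{A_j,A_j}(v)\le\binom{|A_j|}{2}$ for $j\ne i$ transfers this to $d_{A_i,A_i}(v) \ge \binom{|A_i|}{2} - O(tc^{1/2}n^2)$, and the choice $c = (t+100)^{-1000}$ ensures $tc^{1/2} \le c^{1/3}$, yielding the claim.

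The main obstacle is obtaining the coefficient $\tfrac{2}{3}$ in front of $\sum_i |H_{ii4}|$: the separate naive bounds $|H_{iii}| \le \ex(|A_i|,\ftp) \approx \tfrac{t}{3}\binom{|A_i|}{2}$ and $|H_{ii4}| \le t\binom{|A_i|}{2}$ only deliver $\sum_i |H_{ii4}| \gtrsim \tfrac{|B|n^2}{9}$, which on average forces each $v$'s link in $\bigcup_i\binom{A_i}{2}$ to have size merely $\tfrac{n^2}{9}$ rather than the $\tfrac{n^2}{6}$ needed to squeeze each $d_{A_i,A_i}(v)$ close to $\binom{|A_i|}{2}$. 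Bundling the two counts via $3|H_{iii}|+|H_{ii4}|\le t\binom{|A_i|}{2}$ is what produces the necessary extra factor of $3/2$; everything else is careful bookkeeping of the lower-order error terms.
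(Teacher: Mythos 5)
Your proposal is correct and is essentially the same argument as the paper's, reorganized as an averaging step rather than a direct contrapositive. The paper assumes for contradiction that some fixed $v\in B$ and $k\in[3]$ have $d_{A_k,A_k}(v) < \binom{|A_k|}{2} - c^{1/3}n^2$, feeds that specific deficit into the bound $|H_{kk4}|\le |B|\binom{|A_k|}{2} - c^{1/3}n^2$, and combines it with $3|H_{iii}| + |H_{ii4}| \le t\binom{|A_i|}{2}$ (taking $\tfrac23$ of the first inequality plus $\tfrac13$ of the second) to conclude $|H| < \sHo$. You instead use only the second inequality plus the trivial $d_{A_i,A_i}(v)\le\binom{|A_i|}{2}$, derive the aggregate lower bound $\sum_i |H_{ii4}|\ge \tfrac{|B|n^2}{6} - O(tc^{1/2}n^2)$ from $|H|\ge\sHo$, and then distribute the total deficit over the (nonnegative) per-vertex, per-part terms. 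The two are numerically identical — the $\tfrac32$ factor you isolate from the substitution $|H_{iii}|\le \tfrac{t}{3}\binom{|A_i|}{2} - \tfrac13|H_{ii4}|$ is precisely the same weighting the paper builds into its linear combination — and both give an error of order $tc^{1/2}n^2$, absorbed into $c^{1/3}n^2$ by the choice $c=(t+100)^{-1000}$. So your approach is valid and faithful to the paper; the only stylistic difference is that the paper's version avoids the intermediate step of first proving the aggregate estimate.
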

\begin{proof}
Assume that there exist vertex $v \in B$ and $k \in [3]$ such that $d_{A_{k},A_{k}}(v) < \b{|A_{k}|}{2} - c^{1/3} n^2$. Then, for every $i\in [3]$, 
\begin{equation*}
|H_{ii4}| \le |B|\b{|A_{i}|}{2} - \mathbbm{1}_{\{i=k\}} c^{1/3} n^2 \quad \text{and} \quad 3 |H_{iii}| + |H_{ii4}|  \le t \b{|A_i|}{2},
\end{equation*}
where the second inequality holds by Lemma~\ref{lem::A} (ii). Combining these two inequalities, we get
\begin{align}
\label{ineq:coll4}
\sum_{i=1}^3( |H_{iii}| + |H_{ii4}|)\le \left(\frac{t}{3} + \frac{2|B|}{3}\right) \left(\sum_{i=1}^3 \b{|A_{i}|}{2}\right) -\frac{2}{3}c^{1/3}n^2 \leq \frac{t}{18}n^2+\frac{|B|}{9}n^2-\frac{1}{2}c^{1/3}n^2,
\end{align}
where the last inequality holds by \eqref{ineq: classsizes}.
Now we have
\begin{gather}
\label{ineq:coll1}
|H_{123}|\le |A_1||A_2||A_3| \le \left(\frac{n-|B|}{3}\right)^3 = \frac{1}{27}n^3 - \frac{1}{9}|B|n^2 + \frac{1}{9} |B|^2n - \frac{1}{27} |B|^3, \\
\label{ineq:coll2}
 \sum_{1\le i < j \le 3} |H_{ij4}|\le |B| \cdot 3cn^2 \le 3tcn^2 \quad \text{by Lemma~\ref{lem::B} (i) (ii),} \quad \text{and} \\
 \label{ineq:coll3}
 \sum_{i=1}^4 |H_{i44}| \le \b{|B|}{2} t \le t^3 \quad \text{by Lemma~\ref{lem::B} (i) (iii).}
\end{gather}
Combining \eqref{equ::H}, \eqref{ineq:coll4}, \eqref{ineq:coll1}, \eqref{ineq:coll2}, \eqref{ineq:coll3}, and Claim~\ref{cla::Hbad}, we get 
$$
|H| \le \frac{1}{27}n^3 + \left(\frac{1}{18}t - \frac{1}{3} c^{1/3}\right)n^2 < \sHo,
$$ 
a contradiction.
\end{proof}

\begin{claim}
\label{cla:S_vsmall}
For vertex $v\in B$ and $i\in [3]$, let $S_{v,i} \ce \{x\in A_i : d_{A_i}(v,x) \leq t+2\}$. Then $|S_{v,i}|\leq 10c^{1/3}n$.  
\end{claim}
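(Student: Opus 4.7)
The plan is to deduce this directly from Claim~\ref{cla::Bfull} by a double-counting argument applied to the link of $v$ inside $A_i$. Concretely, observe that
\[
d_{A_i,A_i}(v) = \frac{1}{2}\sum_{x\in A_i} d_{A_i}(v,x),
\]
and by Claim~\ref{cla::Bfull} the left-hand side is at least $\binom{|A_i|}{2} - c^{1/3}n^2$, so
\[
\sum_{x\in A_i} d_{A_i}(v,x) \ge |A_i|(|A_i|-1) - 2c^{1/3}n^2.
\]

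Next I would split the sum according to whether $x$ lies in $S_{v,i}$ or not. Using the trivial upper bound $d_{A_i}(v,x) \le |A_i|-1$ for $x \notin S_{v,i}$ and $d_{A_i}(v,x) \le t+2$ for $x \in S_{v,i}$, I get
\[
\sum_{x\in A_i} d_{A_i}(v,x) \le (|A_i|-|S_{v,i}|)(|A_i|-1) + |S_{v,i}|(t+2).
\]
Combining the two inequalities and cancelling $|A_i|(|A_i|-1)$ yields
\[
|S_{v,i}|\bigl(|A_i|-t-3\bigr) \le 2c^{1/3}n^2.
\]

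To finish, I would plug in the lower bound $|A_i| \ge (\tfrac{1}{3}-3c)n - t$ from \eqref{ineq: classsizes}, so that for sufficiently large $n$ (recalling $c=(t+100)^{-1000}$ is tiny relative to $t$) we have $|A_i|-t-3 \ge n/4$. This gives $|S_{v,i}| \le 8c^{1/3}n \le 10c^{1/3}n$, as claimed. There is no real obstacle here; the only step that needs a sentence of justification is the size estimate on $|A_i|-t-3$, which follows immediately from \eqref{ineq: classsizes} and the fact that $cn$ and $t$ are both negligible compared to $n$ for large $n$.
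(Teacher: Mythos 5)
Your proof is correct and is essentially the same argument as the paper's. The paper bounds $d_{A_i,A_i}(v)$ from above by splitting pairs according to whether both endpoints avoid $S_{v,i}$ (at most $\binom{|A_i|-|S_{v,i}|}{2}$ such pairs) or at least one lies in $S_{v,i}$ (at most $|S_{v,i}|(t+2)$), then compares with the lower bound from Claim~\ref{cla::Bfull}; you instead pass through the handshake identity $d_{A_i,A_i}(v) = \tfrac{1}{2}\sum_{x} d_{A_i}(v,x)$ and split the sum by $x\in S_{v,i}$ or not, which gives a marginally looser intermediate inequality but the same final bound $|S_{v,i}| \le 8c^{1/3}n \le 10c^{1/3}n$ after invoking \eqref{ineq: classsizes}.
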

\begin{proof}
By the definition of $S_{v,i}$ and Claim~\ref{cla::Bfull},
\bdm
\binom{|A_i|-|S_{v,i}|}{2} + |S_{v,i}|(t+2)\ge  d_{A_i,A_i}(v) \ge \b{|A_i|}{2}-c^{1/3}n^2,
\edm
so $|S_{v,i}| \le 10c^{1/3}n$, where we also use \eqref{ineq: classsizes}.
\end{proof}

\begin{claim} \label{cla::BfHii}
If $|B| \ge 1$, then $|H_{iii}| \le (t+1)c^{1/3}n^2$ for $i\in [3]$.
\end{claim}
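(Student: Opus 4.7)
The plan is to fix any $v \in B$ (which exists since $|B| \ge 1$) and to show that Claims~\ref{cla::Bfull} and~\ref{cla:S_vsmall} force most hyperedges of $H_{iii}$ to close up, together with $v$, into a copy of $\ffp$. Introduce the auxiliary sets $T_v := A_i \setminus S_{v,i}$ and $\overline{G}_v := \binom{A_i}{2}\setminus\{\{x,y\}: xyv \in H\}$. Claim~\ref{cla::Bfull} gives $|\overline{G}_v|\le c^{1/3}n^2$ and Claim~\ref{cla:S_vsmall} gives $|S_{v,i}|\le 10c^{1/3}n$. Each $e = xyz \in H_{iii}$ will be classified as either lying entirely in $S_{v,i}$ or containing some pair in $\overline{G}_v$.

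The key embedding step: suppose $xyz \in H_{iii}$ has $z \in T_v$ and $\{x,y\} \notin \overline{G}_v$, i.e.\ $xyv \in H$. Since $z \in T_v$ means $d_{A_i}(v,z) \ge t+3$, one can pick $t+1$ distinct vertices $c_1,\ldots,c_{t+1} \in L_{A_i}(v,z)\setminus\{x,y\}$; these are automatically distinct from $v$ (as $v \notin A_i$) and from $z$ (as $z \notin L_{A_i}(v,z)$). Then the $t+3$ hyperedges $xyz,\, xyv,\, vzc_1,\ldots,vzc_{t+1}$ form a copy of $\ffp$ under the identification $(1,2,3,4,5,\ldots,5+t) \mapsto (x,y,z,v,c_1,\ldots,c_{t+1})$, contradicting the $\ffp$-freeness of $H$. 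Consequently every hyperedge of $H_{iii}$ either lies entirely in $S_{v,i}$ or contains at least one pair in $\overline{G}_v$.

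Finally, I count each class using the codegree bound $d(x,x') \le t$ for distinct $x,x' \in A_i$ from Lemma~\ref{lem::A}(ii). The hyperedges of $H_{iii}$ with all three vertices in $S_{v,i}$ number at most $\tfrac{t}{3}\binom{|S_{v,i}|}{2} \le \tfrac{50t}{3}c^{2/3}n^2$, while the hyperedges with at least one pair in $\overline{G}_v$ number at most $t|\overline{G}_v| \le t c^{1/3}n^2$ (summing the codegree of the offending pair). Since $c = (t+100)^{-1000}$ is small enough that $\tfrac{50t}{3}c^{1/3} \le 1$, summing the two bounds yields $|H_{iii}| \le (t+1)c^{1/3}n^2$, as required. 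The only nontrivial step is verifying the embedding (in particular, that the $c_\ell$ are pairwise distinct and avoid $\{x,y,z,v\}$, which follows from $|L_{A_i}(v,z)\setminus\{x,y\}| \ge (t+3) - 2 = t+1$); everything else is routine pair-counting.
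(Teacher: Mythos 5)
Your proof is correct and follows essentially the same route as the paper: fix a non-typical vertex $v$, use Claim~\ref{cla:S_vsmall} to bound $|S_{v,i}|$ and Claim~\ref{cla::Bfull} to bound the number of pairs in $A_i$ not in $L_{A_i,A_i}(v)$, then argue that any $xyz\in H_{iii}$ with a vertex outside $S_{v,i}$ must have its complementary pair missing from $L_{A_i,A_i}(v)$, or else the two edges $xyz$, $xyv$ together with $t+1$ vertices from $L_{A_i}(v,z)$ close up to a copy of $\ffp$. The labeling of the embedding and the split into two hyperedge classes (all-in-$S_{v,i}$ versus containing a pair of $\overline{G}_v$) matches the paper's; your constant $\tfrac{50t}{3}c^{2/3}n^2$ for the first class and $tc^{1/3}n^2$ for the second are absorbed exactly as claimed since $\tfrac{50t}{3}c^{1/3}\le 1$ for $c=(t+100)^{-1000}$.
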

\begin{proof}
Let $v \in B$ and $i\in [3]$. 
By Claim~\ref{cla:S_vsmall}, $|S_{v,i}|\leq 10c^{1/3}n$. 
For every $xx'x'' \in H_{iii}$ where $x'' \in A_i\setminus S_{v,i}$, we have $xx' \notin L_{A_i,A_i}(v)$; otherwise there exist distinct vertices $x_1,\ldots, x_{t+1}\in L_{A_i}(v,x'')\setminus \{x,x'\}$, and the hyperedges $\{xx'v,xx'x''$, $vx''x_1,\ldots, vx''x_{t+1}\}$ form an $\ffp$ in $H$, a contradiction. 
By Lemma~\ref{lem::A} (ii), every pair in $A_i$ has codegree at most $t$. 
By Claim~\ref{cla::Bfull}, there are at most $\left(\b{|A_i|}{2} - d_{A_i,A_i}(v)\right) t \le tc^{1/3}n^2$ hyperedges in $H_{iii}$ containing at least one vertex in $A_i\setminus S_{v,i}$. 
There are at most $\b{|S_{v,i}|}{2} t \le c^{1/3}n^2$ hyperedges in $H_{iii}$ containing three vertices in $S_{v,i}$. Therefore, $|H_{iii}| \le (t+1) c^{1/3} n^2$.
\end{proof}

\begin{claim} \label{claim::B1t1}
$|B| \notin [1,t-1]$.
\end{claim}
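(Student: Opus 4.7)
Suppose for contradiction that $|B|\in[1,t-1]$. The plan is to bound each summand on the right-hand side of~\eqref{equ::H} and show that $|H|<\sHo$, contradicting $|H|\ge\sHo$. The intuition is as follows: when $|B|\ge 1$, Claim~\ref{cla::BfHii} kills the ``design-like'' contribution of $\approx\tfrac{t}{18}n^2$ from $\sum_i|H_{iii}|$, forcing it to be replaced by hyperedges of $\sum_i|H_{ii4}|$; but each $v\in B$ contributes at most $\binom{|A_i|}{2}$ such edges for each $i$, totaling roughly $\tfrac{|B|}{6}n^2$. Meanwhile AM-GM gives $|H_{123}|\le\bigl(\tfrac{n-|B|}{3}\bigr)^3$, a loss of $\approx\tfrac{|B|}{9}n^2$ against the balanced optimum. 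The net change is $\tfrac{|B|}{18}n^2-\tfrac{t}{18}n^2$, which is strictly negative precisely when $|B|<t$.

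Concretely, I would collect the following bounds. By AM-GM, $|H_{123}|\le\bigl((n-|B|)/3\bigr)^3=\tfrac{1}{27}n^3-\tfrac{|B|}{9}n^2+O(n)$. Claim~\ref{cla::BfHii} gives $\sum_i|H_{iii}|\le 3(t+1)c^{1/3}n^2$ and Claim~\ref{cla::Hbad} gives $|H_{bad}|\le 12tc^{1/2}n^2$; Lemma~\ref{lem::B}(i),(ii) gives $\sum_{1\le i<j\le 3}|H_{ij4}|\le 3|B|cn^2$, and Lemma~\ref{lem::B}(i),(iii) gives $\sum_{i=1}^4|H_{i44}|\le\binom{|B|}{2}t\le t^3$. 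The main estimate is $\sum_{i=1}^3|H_{ii4}|=\sum_i\sum_{v\in B}d_{A_i,A_i}(v)\le|B|\sum_i\binom{|A_i|}{2}$, which combined with~\eqref{ineq: classsizes} gives $\sum_i|H_{ii4}|\le\tfrac{|B|}{6}n^2+O(|B|cn^2)$.

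Summing via~\eqref{equ::H}, I obtain $|H|\le\tfrac{1}{27}n^3+\tfrac{|B|}{18}n^2+O(tc^{1/3}n^2)+O(n)$. Comparing with $\sHo\ge\tfrac{1}{27}n^3+\tfrac{t}{18}n^2-O(n)$ from Lemma~\ref{lem::LowerHo}, I get $|H|-\sHo\le\tfrac{|B|-t}{18}n^2+O(tc^{1/3}n^2)+O(n)$. For $|B|\le t-1$, the main term is at most $-\tfrac{n^2}{18}$, and since $c=(t+100)^{-1000}$ makes $tc^{1/3}$ tiny, the right-hand side is strictly negative for large $n$. This contradicts $|H|\ge\sHo$, so $|B|\notin[1,t-1]$. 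The only subtlety is tracking error orders carefully to verify that the $\tfrac{n^2}{18}$ deficit dominates all the error terms; no new combinatorial idea beyond the previously established claims is required.
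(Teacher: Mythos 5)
Your proof is correct and follows essentially the same route as the paper: decompose $|H|$ via \eqref{equ::H}, use Claim~\ref{cla::BfHii} to kill $\sum_i |H_{iii}|$, bound $\sum_i|H_{ii4}|$ by $|B|\sum_i\binom{|A_i|}{2}$, and combine with \eqref{ineq:coll1}, \eqref{ineq:coll2}, \eqref{ineq:coll3}, and Claim~\ref{cla::Hbad} to obtain $|H|\le\frac{n^3}{27}+\frac{|B|}{18}n^2+o(n^2)$, which falls short of $\sHo$ when $|B|\le t-1$. The only difference is presentational: you spell out the AM-GM step for $|H_{123}|$ which the paper absorbs into citing \eqref{ineq:coll1}.
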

\begin{proof}
Assume $|B| \in [1,t-1]$. By Claim~\ref{cla::BfHii}, $\sum_{i=1}^3 |H_{iii}| \leq 3(t+1)c^{1/3}n^2$. By \eqref{ineq: classsizes}, $\sum_{i=1}^3 |H_{ii4}|\leq |B| \sum_{i=1}^3 \b{|A_i|}{2} \le |B|(\frac{1}{6}+7c)n^2$. Then, by \eqref{equ::H}, \eqref{ineq:coll1}, \eqref{ineq:coll2}, \eqref{ineq:coll3}, and Claim~\ref{cla::Hbad}, 
we get
\begin{align*}
    |H|\le \frac{n^3}{27} +\frac{|B|n^2}{18} + 0.01n^2 < \sHo,
\end{align*}
a contradiction. 
\end{proof}

\begin{claim} \label{cla::Bt}
$|B|\neq t$.
\end{claim}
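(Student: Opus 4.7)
Suppose for contradiction that $|B|=t$. The plan is to upgrade the bound $\sum_{1\leq i<j\leq 3}|H_{ij4}| \leq 3tcn^2$ coming from Lemma~\ref{lem::B}(ii) all the way down to $O(t^2cn)$; combined with the other edge-count bounds, this will force $|H| < \sHo$ via comparison with $\sHtp$ using Lemma~\ref{lem::HoHt}, yielding the desired contradiction.

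Fix $v \in B$ and $\{i,j,k\}=[3]$. Define the set of \emph{good partners} $G := \{b \in A_k : d_{A_k}(v,b) \geq t+1\}$. From Claim~\ref{cla::Bfull}, $d_{A_k,A_k}(v) \geq \binom{|A_k|}{2} - c^{1/3}n^2$, so $\sum_{b \in A_k} d_{A_k}(v,b) \geq |A_k|(|A_k|-1) - 2c^{1/3}n^2$; using $d_{A_k}(v,b) \leq |A_k|-1$ together with \eqref{ineq: classsizes}, a one-line counting argument yields $|A_k \setminus G| \leq 8c^{1/3}n$. The main claim is that for every $\{x,y\} \in L_{A_i,A_j}(v)$, one has $L_{A_k}(x,y) \cap G = \varnothing$. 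Indeed, if $b \in L_{A_k}(x,y)\cap G$, then $x,y,v,b$ lie in pairwise distinct parts ($A_i, A_j, B, A_k$), and by $b \in G$ we may pick $t+1$ further vertices $e_1,\ldots,e_{t+1} \in L_{A_k}(v,b)$, automatically distinct from $x,y,v,b$ since they belong to $A_k$. The mapping $(1,2,3,4,5,\ldots,5+t) \mapsto (x,y,v,b,e_1,\ldots,e_{t+1})$ embeds $\ffp$ in $H$, contradicting $\ffp$-freeness. Hence every $\{x,y\} \in L_{A_i,A_j}(v)$ satisfies $d_{A_k}(x,y) \leq |A_k \setminus G| = O(c^{1/3}n)$.

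To turn this into a count, I use the sharper estimate $|\bar{H}_\varpi| = O(tcn^2)$ for $\varpi = (A_1,A_2,A_3)$, obtained by subtracting from $|\bar{H}_\pi| \leq (\frac{1}{9}+c)tn^2$ (Theorem~\ref{thm::UpperHpi}) the contribution $\sum_{v\in B}(|V_j||V_k|-d_{j,k}(v)) \geq t(\frac{1}{9}-O(c))n^2$ from missing crossing triples incident to $B$ (via Lemma~\ref{lem::B}(ii)), with an $O(t^2n)$ correction for triples touching at least two vertices of $B$. Each pair $\{x,y\} \in A_i\times A_j$ with $d_{A_k}(x,y) \leq O(c^{1/3}n)$ contributes at least $n/4$ to $|\bar{H}_\varpi|$, so there are at most $O(tcn)$ such pairs. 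Hence $|L_{A_i,A_j}(v)| = O(tcn)$ for every $v \in B$ and every $i \neq j$, and $\sum_{1\leq i<j\leq 3}|H_{ij4}| \leq 3t \cdot O(tcn) = O(t^2cn)$.

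To conclude, combine the codegree identity $3\sum_i|H_{iii}| + |H_{bad}| + \sum_i|H_{ii4}| = \sum_{i\in[3]}\sum_{\{x,y\}\in\binom{A_i}{2}} d(x,y) \leq t\sum_i\binom{|A_i|}{2}$ (from Lemma~\ref{lem::A}(ii)) with $|H_{123}| \leq |A_1||A_2||A_3|$ and $\sum_{i\in[3]}|H_{i44}| + |H_{444}| \leq \binom{t}{2}t$ (from Lemma~\ref{lem::B}(iii) applied to $|B|=t$). Inserting into the decomposition~\eqref{equ::H},
\[
|H| \leq |A_1||A_2||A_3| + t\sum_i\binom{|A_i|}{2} + O(t^2cn) + \binom{t}{2}t = \sHtp + O(t^2cn) + O(t^3),
\]
where $B$ plays the role of $T$ in the definition of $\sHtp$. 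Since $c=(t+100)^{-1000}$, the error $O(t^2cn)+O(t^3)$ is $o(n)$, so for $n$ large the right-hand side is strictly less than $\sHtp+n/10$, which by Lemma~\ref{lem::HoHt} is strictly less than $\sHo$, contradicting $|H|\geq\sHo$. The main obstacle is producing the sharp bound $|L_{A_i,A_j}(v)| = O(tcn)$ rather than the weak $O(cn^2)$ one gets immediately from Lemma~\ref{lem::B}(ii); this requires coupling Claim~\ref{cla::Bfull}'s near-complete structure of $v$'s link inside each $A_k$ with the $\ffp$-embedding to eliminate almost all candidate cross-edges incident to $B$.
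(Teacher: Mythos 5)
Your proof is correct and relies on the same key ingredients as the paper's: the $\ffp$-embedding $(x,y,v,b,e_1,\ldots,e_{t+1})$ through a $B$-vertex $v$ to force $d_{A_k}(x,y)\le |A_k\setminus G|=O(c^{1/3}n)$ for pairs in $L_{A_i,A_j}(v)$, the near-completeness of $v$'s link inside each $A_i$ from Claim~\ref{cla::Bfull} (packaged in the paper as Claim~\ref{cla:S_vsmall}), the decomposition of $|\bar{H}_\pi|$ into $|\bar{H}_\varpi|$ plus the contribution of $B$, and the comparison with $\sHtp$ via Lemma~\ref{lem::HoHt}. The only difference is organizational: the paper splits into cases according to whether $\sum_{1\le i<j\le 3}|H_{ij4}|\le n/11$, disposing of the large case by contradicting the upper bound on $|\bar{H}_\pi|$, whereas you push the same estimates one step further to obtain the unconditional bound $\sum_{1\le i<j\le 3}|H_{ij4}|=O(t^2cn)\ll n/11$ and then invoke the small case directly.
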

\begin{proof}
Assume $|B|=t$. By Lemma~\ref{lem::A} (ii), we have
\begin{align}
\label{ieq: badt}
\sum_{i=1}^3 |H_{iii}|+  |H_{bad}|+ \sum_{i=1}^3 |H_{ii4}| 
\le t \sum_{i=1}^3 \b{|A_i|}{2}.
\end{align}
If $ \sum_{1\le i < j \le 3} |H_{ij4}| \leq \frac{n}{11}$, then by \eqref{equ::H}, \eqref{ineq:coll3}, \eqref{ieq: badt}, and Lemma~\ref{lem::HoHt}, 
\bdm
|H| \le |A_1||A_2||A_3| + t \sum_{i=1}^3 \b{|A_i|}{2} + \frac{n}{11} + t^3 \le \sHtp + \frac{n}{10} < \sHo,
\edm
a contradiction. Hence, $ \sum_{1\le i < j \le 3} |H_{ij4}| > \frac{n}{11}$, so there exist $v\in B$ and $i\neq j \in \{1,2,3\}$ such that $d_{A_i,A_j}(v) \ge \frac{n}{33t}$. For every $x\in A_i,y\in A_j$ with $xyv\in H$ and every $z \in L_{A_k}(x,y)$, we have $d_{A_k}(v,z)< t+1$; otherwise there exist distinct vertices $z_1, \ldots, z_{t+1} \in L_{A_k}(v,z)$, and then hyperedges $\{xyv, xyz, vzz_1,\ldots, vzz_{t+1}\}$ form a copy of $\ffp$ in $H$, a contradiction. By Claim~\ref{cla:S_vsmall}, $d_{A_k}(x,y)\leq |S_{v,k}|\le 10c^{1/3}n$. Thus,
$|\bar{H}_\varpi| \ge \frac{n}{33t} (|A_k| - 10c^{1/3}n)$. 

Let $\{i,j,k\} = [3]$. For every vertex $v \in B_i$, by Lemma~\ref{lem::B} (ii), there are at least $|A_j||A_k| - cn^2$ missing crossing hyperedges in $\bar{H}_\pi$ containing $v$ and two vertices in $A$. By \eqref{ineq: classsizes}, we conclude
\bdm
|\bar{H}_\pi| \ge |\bar{H}_\varpi| + t \left(\left( \frac{1}{3} n - 3cn -t \right)^2 - cn^2\right)
> \left(\frac{t}{9} + \frac{1}{100t}\right) n^2 > \left(\frac{1}{9} + c\right) tn^2,
\edm
contradicting Theorem~\ref{thm::UpperHpi}.
\end{proof}

By Lemma~\ref{lem::B} (i) and Claims~\ref{claim::B1t1} and~\ref{cla::Bt}, $|B| = 0$.
\end{proof}

\subsection{Proof of Theorem~\ref{thm::Main} with a minimum degree assumption}

\label{subsec::proMin}
\begin{theorem} \label{thm::MainMin}
For every $t \ge 0$ and sufficiently large $n$, let $H$ be a $3$-graph on vertex set $[n]$ with maximum number of hyperedges among the $n$-vertex $\ffp$-free $3$-graphs satisfying $\delta(H)\geq \frac{1}{9}n^2 - n$. Then, $H \in \fHo$.
\end{theorem}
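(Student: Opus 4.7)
Let $\pi=(V_1,V_2,V_3)$ be a partition of $[n]$ maximizing $|H_\pi|$. First, every $\Ho\in\fHo$ has minimum degree at least $\lfloor n/3\rfloor\lfloor(n+1)/3\rfloor\ge\frac{1}{9}n^2-n$ (the minimum being attained at a vertex of a largest part, which contributes $ab$ crossings where $a\le b$ are the sizes of the two other parts). Hence every $\Ho\in\fHo$ is a feasible competitor for $H$, so the extremality of $H$ yields $|H|\ge\sHo$. By Theorem~\ref{thm::Bempty}, $B=\varnothing$ and $V_i=A_i$ for each $i\in[3]$; thus every vertex of $H$ is typical and Lemma~\ref{lem::A} applies throughout.

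By Lemma~\ref{lem::A}(ii), every pair of distinct vertices inside any $V_i$ has codegree at most $t$, so $H_{iii}$ is $\ftp$-free and $|H_{iii}|\le\ex(|V_i|,\ftp)$. Applying the decomposition~\eqref{equ::H} (with the $A_4=B$ terms vanishing) and $|H_\pi|=|V_1||V_2||V_3|-|\bar H_\pi|$ gives
$$|H|=\sHop-|\bar H_\pi|-\sum_{i=1}^3\bigl(\ex(|V_i|,\ftp)-|H_{iii}|\bigr)+|H_{bad}|.$$
Combined with $|H|\ge\sHo\ge\sHop$ (the second inequality by Lemma~\ref{lem::balLar}), this rearranges to
$$|H_{bad}|\ge|\bar H_\pi|+\sum_{i=1}^3\bigl(\ex(|V_i|,\ftp)-|H_{iii}|\bigr)+\bigl(\sHo-\sHop\bigr),$$
where every term on the right-hand side is nonnegative.

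The crux of the argument is to prove $|H_{bad}|=0$. For this I would exploit Lemma~\ref{lem::A}(iii): each bad hyperedge $\{x,x',y\}\in H_{iij}$ produces two bad cross-pairs $(x,y),(x',y)\in V_i\times V_j$, both satisfying $d_{V_k}(\cdot,y)\le 50cn$ and hence each contributing at least $|V_k|-50cn=(1/3-O(c))n$ missing crossings. When several bad hyperedges share a common $V_j$-vertex $y$, their $V_i$-endpoints still lie in a set $X_y\subseteq V_i$ of size at least $\sqrt{2\,d_{V_i,V_i}(y)}$, yielding correspondingly many distinct bad cross-pairs through $y$. Summing these local lower bounds on $|\bar H_\pi|$, and combining with the caps $d_{V_j,V_j}(x),d_{V_k,V_k}(x)\le cn^2$ from Lemma~\ref{lem::A}(i)(c) (which prevent a single vertex from hosting too many bad edges), should give an upper bound on $|H_{bad}|$ incompatible with the reverse inequality $|H_{bad}|\ge|\bar H_\pi|$ above unless both vanish. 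The main obstacle is to rule out a cross-pair $(x,y)$ with very large $d_{V_i}(x,y)$, since many bad edges could then share the same two bad pairs and break the incidence count; I would handle it by showing that if $d_{V_i}(x,y)\ge t+2$, then using the witnesses $v_0,\ldots,v_{t+1}\in L_{V_i}(x,y)$ as the heavy pair's common neighbors together with a common neighbor of $(x,y)$ in $V_j\cup V_k$ (guaranteed by the minimum-degree assumption), one can locate a copy of $\ffp$ in $H$, contradicting $\ffp$-freeness. This pins down $d_{V_i}(x,y)$ to a constant in $t$, which is small enough for the global count to succeed.

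Once $|H_{bad}|=0$, the displayed inequality collapses to equality, forcing $|\bar H_\pi|=0$, $\sHo=\sHop$ (so $\pi$ is balanced by Lemma~\ref{lem::balLar}), and $|H_{iii}|=\ex(|V_i|,\ftp)$ for each $i$ (so $H[V_i]\in\mH(|V_i|,t)$). Consequently $H$ consists of the balanced complete tripartite $3$-graph on $[n]$ together with an extremal $\ftp$-free hypergraph inside each part, i.e., $H\in\fHo$.
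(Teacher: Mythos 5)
Your framework matches the paper exactly: the observation that $\delta(S(n))\ge\frac19n^2-n$ gives $|H|\ge\sHo$, the application of Theorem~\ref{thm::Bempty} to kill $B$, the decomposition isolating $|H_{bad}|\ge|\bar H_\pi|+\sum_i(\ex(|V_i|,\ftp)-|H_{iii}|)+(\sHo-\sHop)$, and the endgame deducing $H\in\fHo$ once $H_{bad}=\varnothing$. The difference lies in the crux. The paper (Claims~\ref{cla::oneThenmany}--\ref{Hbadempty}) shows $|G_{12}|\gtrsim\sqrt n$, then pigeonholes via $\sum_{e\in G_{12}}(d_{G_{12}}(x)+d_{G_{12}}(y))$ to extract a single vertex $x_0$ with $d_{G_{12}}(x_0)\ge(\frac16-27c)n$, and contradicts Lemma~\ref{lem::A}(i)(d) for that one vertex. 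You instead do a global averaging: for each $y\in V_j$, the $\ge\sqrt{2\,d_{V_i,V_i}(y)}$ distinct cross-pairs through $y$ each force $\ge(\frac13-53c)n$ missing crossings by Lemma~\ref{lem::A}(iii), while the cap $d_{V_i,V_i}(y)\le cn^2$ from Lemma~\ref{lem::A}(i)(c) gives $d_{V_i,V_i}(y)\le\sqrt{c}\,n\cdot\sqrt{d_{V_i,V_i}(y)}$; summing over $y$ and over the dominant pair $(i,j)$ then yields $|H_{bad}|\le O(\sqrt c)\,|\bar H_\pi|$, contradicting $|H_{bad}|\ge|\bar H_\pi|$ unless both vanish. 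Carried out carefully, this is a valid alternative to the paper's pigeonhole step (it uses the same ingredients — Lemma~\ref{lem::A}(i) and (iii) and the $\binom{d}{2}$ incidence bound — but trades the ``find one bad vertex'' argument for a concavity estimate).

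There is, however, a genuine confusion in your sketch: you single out cross-pairs $(x,y)$ with $d_{V_i}(x,y)\ge t+2$ as the ``main obstacle'' and propose to rule them out by locating a copy of $\ffp$. That step is not needed — the bound $|X_y|\ge\sqrt{2\,d_{V_i,V_i}(y)}$ holds irrespective of how large individual codegrees $d_{V_i}(x,y)$ are (and is in fact far from tight in the concentrated case, where $|X_y|\ge d_{V_i}(x,y)+1$). Worse, the proposed $\ffp$-finding argument does not appear to work as stated: the $t+1$ hyperedges $xyv_\ell$ already supply the ``$34k$'' part with $\{3,4\}=\{x,y\}$, but to complete the copy you need a pair $\{a,b\}$ with $abx,aby\in H$, i.e.\ a common edge of the link graphs of $x$ and $y$. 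Since $x\in A_i$ and $y\in A_j$, Lemma~\ref{lem::A}(i) says $L(x)$ lives almost entirely in $V_j\times V_k$ while $L(y)$ lives almost entirely in $V_i\times V_k$, and both $d_{k,k}(x),d_{k,k}(y)\le cn^2$ are tiny, so such a common link-edge is not guaranteed. The ``common neighbour of $(x,y)$ in $V_j\cup V_k$'' you invoke from the minimum-degree assumption is likewise not guaranteed and in any case would not by itself assemble $\ffp$. So if you believed this codegree reduction were essential, your proof would be stuck; fortunately it is a red herring and the global averaging closes the argument without it.
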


\begin{proof}
Let $\pi = (V_1,V_2,V_3)$ be a partition of $[n]$ maximizing $|H_\pi|$, and let $c$, $A_1$, $A_2$, $A_3$, $B$ be defined as in Section~\ref{subsec::typ}. Note that $\delta(\Ho) \ge \delta(S(n)) \ge \frac{1}{9}n^2 -n$, so $|H| \ge \sHo$. By Theorem~\ref{thm::Bempty}, $|B| = 0$, and hence $V_i = A_i$ for $i\in [3]$. We will prove $H_{bad} = \es$. Without loss of generality, we assume $|H_{112}| + |H_{221}| \ge |H_{113}| + |H_{331}| \ge |H_{223}| + |H_{332}|$.

\begin{claim} \label{cla::oneThenmany}
If $|H_{bad}| \ge 1$, then $|G_{12}| > \frac{1}{4}n^{1/2}$.
\end{claim}
\begin{proof}
By Lemma~\ref{lem::balLar}, we have $|H| \ge \sHo \ge \sHop$, and hence,
\begin{equation} 
 \sum_{i=1}^3 |H_{iii}| + |H_{bad}|
 = |H \sm H_\pi| 
 \ge |\bar{H}_\pi| + \sum_{i=1}^3 \ex(|V_i|,\ftp). \label{equ::B0}
\end{equation}
For $i\in [3]$, by Lemma~\ref{lem::A} (ii), every pair of distinct vertices in $A_i$ has codegree at most $t$, so $|H_{iii}| \le \ex(|V_i|,\ftp)$. Hence, we have $|H_{bad}| \ge |\bar{H}_\pi|$. By \eqref{ineq: classsizes} and Lemma~\ref{lem::A} (iii), we have 
\begin{equation}
|\bar{H}_\pi| \ge  |G_{12}|\cdot \left(\frac{1}{3} - 3c -50c \right)n. \label{equ::OneThird}
\end{equation}
If $|H_{bad}| \ge 1$, then $|H_{112}| + |H_{221}| \ge 1$ and hence $|G_{12}| \ge 1$ by the definition of $G_{12}$.
we have 
$$
2\b{|G_{12}|}{2} \ge |H_{112}| + |H_{221}| \ge
\frac{1}{3} |H_{bad}| \ge \frac{1}{3} |\bar{H}_\pi| \ge
\frac{1}{3} \left(\frac{1}{3} - 53c \right)n,
$$ 
implying $|G_{12}| > \frac{1}{4}n^{1/2}$.
\end{proof}

\begin{claim} \label{cla::B0}
If  $|H_{bad}| \ge 1$, then there exist $x_0\in V_1$, $y_0 \in V_2$ such that $d_{G_{12}}(x_0) + d_{G_{12}}(y_0) \ge (\frac{1}{3} - 54c)n$.
\end{claim}

\begin{proof}
By Lemma~\ref{lem::A} (ii),
\begin{equation} 
3|H_{iii}| + |H_{iij}| + |H_{iik}| \le t\b{|V_i|}{2} \quad \textrm{for $\{i,j,k\} = [3]$.} \label{equ::B01}
\end{equation}
By the definition of $G_{12}$,
\begin{equation}
|H_{iij}|+|H_{jji}| \le |H_{112}|+|H_{221}| \le  \sum_{x \in V_1}\b{d_{G_{12}}(x)}{2}+ \sum_{y \in V_2}\b{d_{G_{12}}(y)}{2} \quad \textrm{for $i\neq j \in [3]$.} \label{equ::B02}
\end{equation}
Combining \eqref{equ::B01} and \eqref{equ::B02}, we have
\begin{align}
\sum_{i=1}^3 |H_{iii}| + |H_{bad}| 
&\le \frac{1}{3} t \sum_{i=1}^3 \b{|V_i|}{2} + \frac{2}{3} \cdot 3 \left(\sum_{x \in V_1}\b{d_{G_{12}}(x)}{2}+ \sum_{y \in V_2}\b{d_{G_{12}}(y)}{2}\right) \notag \\
&\le \sum_{i=1}^3 \ex(|V_i|, \ftp)+ \frac{n}{3} + \frac{t^2}{2} + 9t + \sum_{x \in V_1} d_{G_{12}}^2(x) + \sum_{y \in V_2} d_{G_{12}}^2(y) \label{equ::last}
,
\end{align}
where for the last inequality, we use the lower bound on $\ex(m,\ftp)$ from Proposition~\ref{pro::SteineralmostTuran3}.
By \eqref{equ::B0} and \eqref{equ::last},
\begin{equation} \label{equ::346}
\sum_{\{x,y\} \in G_{12}} (d_{G_{12}}(x) + d_{G_{12}}(y)) = \sum_{x \in V_1}d_{G_{12}}^2(x)+ \sum_{y \in V_2}d_{G_{12}}^2(y)
\ge |\bar{H}_\pi| -\frac{n}{3} - \frac{t^2}{2} - 9 t. 
\end{equation}
Using \eqref{equ::OneThird} and \eqref{equ::346}, we get
\begin{equation*}
\sum_{\{x,y\} \in G_{12}} (d_{G_{12}}(x) + d_{G_{12}}(y))  
\ge |G_{12}|\left(\frac{1}{3} - 53c\right)n -\frac{n}{3} - \frac{t^2}{2} - 9 t \ge |G_{12}| \left(\frac{1}{3} -  54c\right) n,
\end{equation*}
where the last inequality holds by Claim~\ref{cla::oneThenmany}. By the pigeon-hole principle, there exist $x_0\in V_1$, $y_0 \in V_2$ such that $d_{G_{12}}(x_0) + d_{G_{12}}(y_0) \ge (\frac{1}{3} - 54c)n$.
\end{proof}

\begin{claim}
\label{Hbadempty}
$H_{bad} = \es$.
\end{claim}

\begin{proof}
Assume for contradiction that $|H_{bad}| \ge 1$. By Claim~\ref{cla::B0}, we can assume that there exists $x_0 \in V_1$ such that $d_{G_{12}}(x_0) \ge (\frac{1}{6} -27c)n$. By \eqref{ineq: classsizes} and Lemma~\ref{lem::A} (iii), $x_0$ is contained in at least $(\frac{1}{6} - 27c)n \cdot (\frac{1}{3} - 53c)n > \frac{1}{20} n^2$ missing crossing hyperedges, contradicting Lemma~\ref{lem::A} (i). Hence, $H_{bad} = \es$.
\end{proof}

By Claim~\ref{Hbadempty} and Lemmas~\ref{lem::A} (ii) and~\ref{lem::balLar}, we get $|H| \le \sHop \leq \sHo $, where equality holds iff $H \in \fHo$.
\end{proof}

\subsection{Proof of Theorem \ref{thm::Main}} \label{subsec::proMai}
\begin{proof}[Theorem~\ref{thm::MainMin} $\Rightarrow$ Theorem~\ref{thm::Main}]
Assume that the statement of Theorem~\ref{thm::Main} does not hold. Then, for some $t \ge 0$, there exist infinitely many positive integers $n$ such that there exists an $n$-vertex $\ffp$-free hypergraph $H_n \notin \fHo$ with $|H_n| \ge \sHo$. Choose a sufficiently large $n$ among them. If $H_n$ contains a vertex $v$ with degree less than $\frac{1}{9}n^2 - n$, let $H_{n-1} = H_n - v$. Repeat this process until there is no such low-degree vertex. We obtain a sequence of hypergraphs $H_n,H_{n-1},\ldots$, where $H_m$ is obtained from $H_{m+1}$ by deleting a vertex in $H_{m+1}$ with degree less than $\frac{1}{9}(m+1)^2 - (m+1)$. Note that we have $|H_{n-l}| \ge \sHo[n-l] + l$, since $\sHo[m] - \sHo[m-1]>\frac{1}{9}m^2 - m$ for sufficiently large $m$ by Lemma~\ref{lem::LowerHo}. This sequence is finite and ends with more than $n_0 = \frac{1}{100}n^{1/3}$ vertices remaining; otherwise there would be at least $n - n_0 > \b{n_0}{3}$ hyperedges in $H_{n_0}$. When the process ends, we reach an $\hat{n}$-vertex $\ffp$-free $3$-graph $H_{\hat{n}}$, where $\hat{n} > n_0$, $|H_{\hat{n}}| > \sHo[\hat{n}]$, and the minimum degree of $H_{\hat{n}}$ is at least $\frac{1}{9}\hat{n}^2 - \hat{n}$.

Now taking $n$ to infinity, we have that there are infinitely many integers $m$ such that there exists an $m$-vertex $\ffp$-free $3$-graph $H_m$ with $|H_m| > \sHo[m]$ and $\delta(H_m) \ge \frac{1}{9}m^2 - m$, contradicting Theorem~\ref{thm::MainMin}.
\end{proof}

\section{Concluding Remarks} \label{sec::Con}
Recall that $S(n)$ denotes the complete tripartite $3$-graph on vertex set $[n]$ and $s(n)=|S(n)|$. An interesting question is to determine all $3$-graphs $H$ with $\ex(n,H) = s(n)$ for sufficiently large $n$. However, this seems to be rather difficult. A more approachable question is the following one. Recall that we write $F_5[t]$ for the $t$-blow-up of $F_5$. For $t \ge 1$, let $\mH^t$ be the family of $3$-graphs $F$ such that $F_5 \subseteq F \subseteq F_5[t]$.

\begin{question} \label{que::last}
For which $F \in \mH^t$ does $\ex(n,F) = s(n)$ hold for sufficiently large $n$?
\end{question}

Let $S'(m)$ be the $3$-graph obtained from $S(m)$ by adding a hyperedge containing three vertices in $V_1$, see Figure~\ref{fig::S'm}. Let $S''(m)$ be the $3$-graph obtained from $S(m)$ by adding a hyperedge containing two vertices in $V_1$ and one vertex in $V_2$, see Figure~\ref{fig::S''m}. Inspired by Simonovits's result~\cite{simonovits1968method} about critical graphs, one may have the natural guess that $\ex(n,F) = s(n)$ for sufficiently large $n$ iff $F \subseteq S'(m)$ and $F \subseteq S''(m)$ for some $m \ge 1$. This assumption is necessary, since $|S'(n)|=|S''(n)| = s(n)+1$. However, it is not sufficient, as shown by the following example.

\begin{figure}[ht]

\tikzstyle{every node}=[circle, draw, fill=black!80, inner sep=0pt, minimum width=2.4pt]
    \begin{minipage}{0.48\textwidth}
	\centering
	\begin{tikzpicture}[scale = 1.4]
	    \draw[color=white, opacity = .0, use as bounding box] (0,-1.3) rectangle (2,1.5);
	
	    \node (x) at (1, 0.7) [label=right:] {};  
	    \node (y) at (0.25, -0.6) [label=below:] {};
	    \node (z) at (1.75, -0.6) [label=below:] {};
	    
	    \node (x1) at (0.9, 1) [label={[label distance=0.01cm]45:$x_1$}] {};
	    \node (x2) at (0.67, 0.8)  [label=left:$x_2$] {};
	    \node (x3) at (0.82, 0.7)  [label={[label distance=0.02cm]265:$x_3$}] {};

	    \node at (1.9, 0.8) [fill=black!0,draw=black!0] {$V_1$};
	    \node at (-0.2, 0.2) [fill=black!0,draw=black!0] {$V_2$};
	    \node at (2.2, 0.2) [fill=black!0,draw=black!0] {$V_3$};
	
		\draw [very thick] (1,0.8) ellipse (0.7cm and 0.5cm);
		\draw [very thick] (0,-0.5) ellipse (0.7cm and 0.5cm);
		\draw [very thick] (2,-0.5) ellipse (0.7cm and 0.5cm);
		
        \draw[thick] (x) -- (y) -- (z) -- (x);
        \draw[thick] (x1) -- (x2) -- (x3) -- (x1);
		
    \end{tikzpicture}
    \caption{Hypergraph $S'(m)$.} \label{fig::S'm}
    \end{minipage}
    \begin{minipage}{0.48\textwidth}
	\centering
	\begin{tikzpicture}[scale = 1.4]
	    \draw[color=white, opacity = .0, use as bounding box] (0,-1.3) rectangle (2,1.5);
	
	    \node (x) at (1, 0.7) [label=right:] {};  
	    \node (y) at (0.25, -0.6) [label=below:] {};
	    \node (z) at (1.75, -0.6) [label=below:] {};
	    
	    \node (x1) at (0.6, 0.8) [label=above:$x_1$] {};
	    \node (x2) at (0.75, 0.7) [label={[label distance=0.03cm]70:$x_2$}] {};
	    \node (y1) at (0.05, -0.35) [label=below:$y_1$] {};

	    \node at (1.9, 0.8) [fill=black!0,draw=black!0] {$V_1$};
	    \node at (-0.2, 0.2) [fill=black!0,draw=black!0] {$V_2$};
	    \node at (2.2, 0.2) [fill=black!0,draw=black!0] {$V_3$};
	
		\draw [very thick] (1,0.8) ellipse (0.7cm and 0.5cm);
		\draw [very thick] (0,-0.5) ellipse (0.7cm and 0.5cm);
		\draw [very thick] (2,-0.5) ellipse (0.7cm and 0.5cm);
		
        \draw[thick] (x) -- (y) -- (z) -- (x);
        \draw[thick] (x1) -- (x2) -- (y1) -- (x1);
		
    \end{tikzpicture}
    \caption{Hypergraph $S''(m)$.} \label{fig::S''m}
    \end{minipage}
\end{figure}

Let $\hat{F}$ be the $3$-graph with vertex set 
\bdm
\{1,2,3,4,5\}\cup \{c_{12},c_{23},c_{31}\} \cup \bigcup_{\substack{l=1,2\\i=1,2,3}} \{a_i^l, b_i^l\}
\edm
and hyperedge set
\bdm
\{123,124,345\}\cup \{c_{12}b_1^2a_2^2, c_{23}b_2^2a_3^2, c_{31}b_3^2a_1^2\} \cup \bigcup_{i=1,2,3} \{5a_i^1b_i^1,5a_i^1b_i^2,5a_i^2b_i^1,5a_i^2b_i^2\},
\edm
see Figure~\ref{fig::hF}.
Hence, the $3$-graph $\hat{F}$ contains $F_5$ and other hyperedges such that the link graph of vertex $5$ contains three extra disjoint complete bipartite graphs on $\{a_i^1, a_i^2\} \cup \{b_i^1, b_i^2\}$. Besides, there are three vertices $c_{12},c_{23},c_{31}$ connecting these three complete bipartite graphs. 

\tikzmath{
\xf5 = -2;
\yf5 = 1;
\drf5 = 1;
\dcf5 = 0.9;
\xk22 = \xf5 + 4;
\yk22 = \yf5+0.2;
\drk22 = 0.5;
\dck22 = 0.3;
\dbetk22 = 0.99;
\drboxk22 = 0.4;
\dcboxk22 = 0.18;
\yc1 = (\yk22-\dck22 + \yk22-\dbetk22) /2;
\yc2 = (\yk22-\dck22-\dbetk22 + \yk22-2*\dbetk22)/2;
\xc3 = \xk22 + 2;
\yc3 = (\yc1 + \yc2) /2;
} 

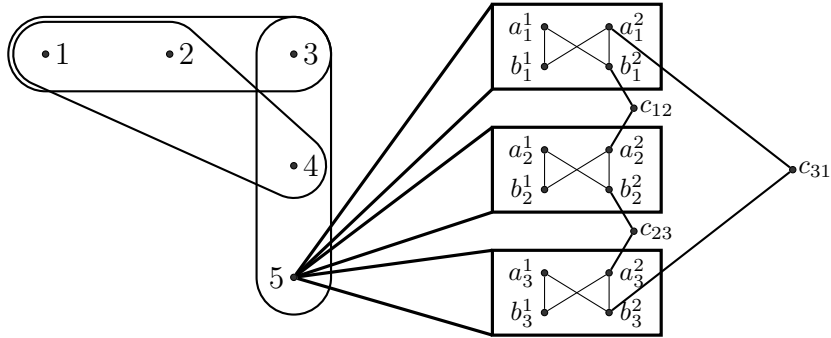
\begin{figure}[ht]
\tikzstyle{every node}=[circle, draw, fill=black!80, inner sep=0pt, minimum width=2.4pt]

	\centering
	\begin{tikzpicture}[scale = 1.65]
		\draw[color=white, opacity = .0, use as bounding box] (0,-1.3) rectangle (2,1.5);
		
		\node (1) at (\xf5, \yf5)  [label=right:$1$] {};  
		\node (2) at (\xf5+\drf5, \yf5)  [label=right:$2$] {};
		\node (3) at (\xf5+2*\drf5, \yf5)  [label=right:$3$] {};
		\node (4) at (\xf5+2*\drf5, \yf5-\dcf5)  [label=right:$4$] {};
		\node (5) at (\xf5+2*\drf5, \yf5-\dcf5-0.9) [label=left:$5$] {};
		
		\node (a11) at (\xk22,\yk22) [label = left:\footnotesize${a^1_1}$] {};
		\node (a21) at (\xk22+\drk22,\yk22) [label = right:\footnotesize$a^2_1$] {};
		\node (b11) at (\xk22,\yk22-\dck22) [label = left:\footnotesize$b^1_1$] {};
		\node (b21) at (\xk22+\drk22,\yk22-\dck22) [label = right:\footnotesize$b^2_1$] {};
		
		\node (c12)  at (\xk22+\drk22+0.2, \yc1) [label = right:\footnotesize$c_{12}$] {};
		
		\node (a12) at (\xk22,\yk22-\dbetk22) [label = left:\footnotesize$a^1_2$] {};
		\node (a22) at (\xk22+\drk22,\yk22-\dbetk22) [label = right:\footnotesize$a^2_2$] {};
		\node (b12) at (\xk22,\yk22-\dck22-\dbetk22) [label = left:\footnotesize$b^1_2$] {};
		\node (b22) at (\xk22+\drk22,\yk22-\dck22-\dbetk22) [label =right:\footnotesize$b^2_2$] {};
		
		\node (c23)  at (\xk22+\drk22+0.2, \yc2) [label = right:\footnotesize$c_{23}$] {};
		
		\node (a13) at (\xk22,\yk22-2*\dbetk22) [label = left:\footnotesize$a^1_3$] {};
		\node (a23) at (\xk22+\drk22,\yk22-2*\dbetk22) [label = right:\footnotesize$a^2_3$] {};
		\node (b13) at (\xk22,\yk22-\dck22-2*\dbetk22) [label = left:\footnotesize$b^1_3$] {};
		\node (b23) at (\xk22+\drk22,\yk22-\dck22-2*\dbetk22) [label =right:\footnotesize$b^2_3$] {};
		
		\node (c31) at (\xc3,\yc3) [label=right:\footnotesize$c_{31}$] {};
		
		\begin{pgfonlayer}{bg}
            \draw[thick] \hedgeiii{1}{2}{3}{3mm};
		    \draw[thick] \hedgeiii{1}{2}{4}{2.6mm};
		    \draw[thick] \hedgeiii{3}{4}{5}{3mm};
		    
		    \draw[very thick] (\xk22-\drboxk22, \yk22+\dcboxk22) rectangle (\xk22+\drk22+\drboxk22, \yk22 - \dck22 - \dcboxk22) {};
		    \draw[very thick] (\xk22-\drboxk22, \yk22-\dbetk22+\dcboxk22) rectangle (\xk22+\drk22+\drboxk22, \yk22-\dbetk22 - \dck22 - \dcboxk22) {};
		    \draw[very thick] (\xk22-\drboxk22, \yk22-2*\dbetk22+\dcboxk22) rectangle (\xk22+\drk22+\drboxk22, \yk22-2*\dbetk22 - \dck22 - \dcboxk22) {};
		    
		    \draw[very thick] (5) -- (\xk22-\drboxk22, \yk22+\dcboxk22);
		    \draw[very thick] (5) -- (\xk22-\drboxk22, \yk22-\dck22 - \dcboxk22);
		    \draw[very thick] (5) -- (\xk22-\drboxk22, \yk22-\dbetk22+\dcboxk22);
		    \draw[very thick] (5) -- (\xk22-\drboxk22, \yk22-\dck22-\dbetk22 - \dcboxk22);
		    \draw[very thick] (5) -- (\xk22-\drboxk22, \yk22-2*\dbetk22+\dcboxk22);
		    \draw[very thick] (5) -- (\xk22-\drboxk22, \yk22-\dck22-2*\dbetk22 - \dcboxk22);
		    
		    \draw (a11) -- (b11);
		    \draw (a11) -- (b21);
		    \draw (a21) -- (b11);
		    \draw (a21) -- (b21);
		    
		    \draw[thick] (b21) -- (c12) -- (a22);
		    
		    \draw (a12) -- (b12);
		    \draw (a12) -- (b22);
		    \draw (a22) -- (b12);
		    \draw (a22) -- (b22);
		    
		    \draw[thick] (b22) -- (c23) -- (a23);

		    \draw (a13) -- (b13);
		    \draw (a13) -- (b23);
		    \draw (a23) -- (b13);
		    \draw (a23) -- (b23);		    
		    
		    \draw[thick] (a21) -- (c31) -- (b23);
        \end{pgfonlayer}
    \end{tikzpicture}
    
    \caption{Hypergraph $\hat{F}$.} \label{fig::hF}
\end{figure}

\begin{claim}
We have $F_5 \subseteq \hat{F} \subseteq F_5[7]$, $\hat{F} \subseteq S'(30),S''(30)$, and $\ex(n,\hat{F})>s(n)$ for suffi\-ciently large $n$.
\end{claim}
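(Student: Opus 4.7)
The plan is to verify the four claims in order: the first three via explicit constructions, and the fourth (the real content of the claim) by producing an $\hat{F}$-free hypergraph that outperforms $S(n)$.

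For $F_5 \subseteq \hat{F}$, I simply note that the three hyperedges $\{123,124,345\}$ of $F_5$ appear verbatim among the listed hyperedges of $\hat{F}$. For $\hat{F} \subseteq F_5[7]$, I would define $\phi: V(\hat{F}) \to V(F_5) = [5]$ by $\phi(i) = i$ for $i \in [5]$, $\phi(a_i^l) = 3$, $\phi(b_i^l) = 4$, and $\phi(c_{ij}) = 5$. The $F_5$-hyperedges of $\hat{F}$ are preserved, each edge $\{5,a_i^l,b_i^{l'}\}$ maps to $\{3,4,5\}$, and each edge $\{c_{ij},b,a\}$ also maps to $\{3,4,5\}$. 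The fiber sizes are $|\phi^{-1}(3)|=|\phi^{-1}(4)|=7$, $|\phi^{-1}(5)|=4$, and $|\phi^{-1}(1)|=|\phi^{-1}(2)|=1$, so $\hat{F} \subseteq F_5[7]$. (In fact, this also suggests the analysis below, since $|\phi^{-1}(5)|=4$ is the ``tight'' obstruction.)

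For $\hat{F} \subseteq S'(30)$, I would use the $3$-partition $W_1 = \{3,4,5,c_{12},c_{23},c_{31}\}$, $W_2 = \{1\}\cup\{a_i^l:i\in[3],l\in[2]\}$, $W_3 = \{2\}\cup\{b_i^l:i\in[3],l\in[2]\}$, so that $\{3,4,5\}\subseteq W_1$ plays the role of the single non-crossing hyperedge of $S'$, and all other hyperedges are easily checked to be crossing. For $\hat{F} \subseteq S''(30)$, I take $W_1 = \{3,4\}\cup\{a_i^l\}$, $W_2 = \{1,5,c_{12},c_{23},c_{31}\}$, $W_3=\{2\}\cup\{b_i^l\}$, so that $\{3,4,5\}$ has two vertices in $W_1$ and one in $W_2$, matching the $S''$-shape, and the remaining hyperedges are crossing. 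Since $|V(\hat{F})|=20 \le 30$, both embeddings work.

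For $\ex(n,\hat{F}) > s(n)$, the idea is to exploit the rigidity of $\hat{F}$-embeddings. In any copy of $\hat{F}$ in an $F_5$-blow-up $F_5[k_1,\ldots,k_5]$, the $F_5$-subhypergraph of $\hat{F}$ pins down the classes: vertices $1,2$ must go to $S_1,S_2$, vertices $3,4$ to $S_3,S_4$, and vertex $5$ to $S_5$; then the link structure forces every $a_i^l \in S_3$, every $b_i^l \in S_4$, and every $c_{ij} \in S_5$. Thus $\hat{F} \subseteq F_5[k_1,\ldots,k_5]$ requires $k_3,k_4 \ge 7$ and $k_5 \ge 4$.

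Therefore, any blow-up $F_5[k_1,k_2,k_3,k_4,k_5]$ with $k_5 \le 3$ (or $k_3 \le 6$ or $k_4 \le 6$) is $\hat{F}$-free. The plan for the final bound is to pick such parameters and show the resulting edge count $k_1k_2k_3+k_1k_2k_4+k_3k_4k_5$ strictly exceeds $s(n)$; failing that (a direct computation suggests a single $F_5$-blow-up may not beat $s(n)$), one augments such a blow-up with a carefully chosen extra structure (e.g., an intra-class Steiner-type design in an $F_5$-free way) and verifies by exhaustive case analysis that all would-be $\hat{F}$-embeddings are killed by either the small $k_i$ or the incompatibility of the added edges with the $c$-connection pattern. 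The main obstacle is precisely this verification: one must rule out every way the four "essential" $\hat{F}$-vertices $\{5,c_{12},c_{23},c_{31}\}$ could be placed, and then show that the net edge gain over $S(n)$ is strictly positive.
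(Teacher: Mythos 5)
Your verifications of $F_5 \subseteq \hat{F} \subseteq F_5[7]$ and of the embeddings $\hat{F} \subseteq S'(30), S''(30)$ are correct, and more explicit than the paper, which leaves these to the reader: the map $\phi$ with $a_i^l \mapsto 3$, $b_i^l \mapsto 4$, $c_{ij} \mapsto 5$ and the two tripartitions you describe all check out.

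The genuine gap is the fourth statement, $\ex(n,\hat{F})>s(n)$, which is the only part of the claim with real content. You rightly note that a single $F_5$-blow-up with $k_5\le 3$ is $\hat{F}$-free but probably does not beat $s(n)$, and then you stop at a plan (``augment with an intra-class Steiner-type design and verify by exhaustive case analysis''), which you yourself identify as ``the main obstacle.'' That plan is also aimed in the wrong direction. The paper's construction $\hat{S}(n)$ takes $S(n-1)$ and adds a single new vertex $v$ whose link graph is complete on $V_1$, complete on $V_2$, and an extremal $K_{2,2}$-free graph on $V_3$; this exceeds $s(n)$ by $\ex(|V_3|,K_{2,2}) = \Theta(n^{3/2})$. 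The $\hat{F}$-freeness exploits a feature your rigidity analysis never touches: the link of vertex $5$ in $\hat{F}$ consists of three vertex-disjoint copies of $K_{2,2}$ (on $\{a_i^1,a_i^2\}\times\{b_i^1,b_i^2\}$), and the $c$-hyperedges chain these copies together so that any embedding into $\hat{S}(n)$ must send $5\mapsto v$ and must place all of $a_i^1,a_i^2,b_i^1,b_i^2$ inside $V_3$ for some $i$ --- which is impossible because $L_{3,3}(v)$ is $K_{2,2}$-free. A design inside a part is not the relevant gadget; a $C_4$-free extremal bipartite graph planted in one vertex's link is what does the job, and producing it is precisely the missing construction.
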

\begin{proof}
The first two claims hold by the definition of $\hat{F}$. For the third claim, let $\hat{S}(n)$ be the $3$-graph obtained from $S(n-1)$ by adding a new vertex $v$ and adding hyperedges such that $L_{1,1}(v)$ and $L_{2,2}(v)$ are complete graphs $K_{|V_1|}$ and $K_{|V_2|}$ respectively, and $L_{3,3}(v)$ is an extremal $K_{2,2}$-free graph. We have $|\hat{S}(n)| > s(n)$ for sufficiently large $n$, since $\ex(n,K_{2,2}) \ge cn^{3/2}$ for some constant $c >0$ by Erd\H{o}s, R\'{e}nyi, and S\'{o}s~\cite{erdHos1966problem}. 
If there exists a copy of $\hat{F}$ in $\hat{S}(n)$, then the vertex in $\hat{S}(n)$ corresponding to vertex $5$ in $\hat{F}$ can only be the new vertex $v$, and then $\{a_i^1, a_i^2,b_i^1, b_i^2\}$ is in different parts of the partition of $\hat{S}(n)$ for different $i$. However, $L_{3,3}(v)$ is $K_{2,2}$-free, a contradiction.
\end{proof}

We have the following sufficient condition for Question~\ref{que::last}. Let $S^+(m)$ be the $3$-graph obtained from $S(m-1)$ by adding a new vertex $v$ and a hyperedge containing $v$ and two vertices in $V_1$, see Figure~\ref{fig::Sp}. Note that $S^+(m) \subseteq S'(m+3), S''(m+3)$. Using a cleaning method similar to the one in Section~\ref{sec::Pro}, we can prove the following claim; we omit its proof. 

\begin{claim}
For $3$-graph $F \in \mH^t$, if $F \subseteq S^+(m)$ for some $m \ge 1$, then $\ex(n,F) = s(n)$ for sufficiently large $n$.
\end{claim}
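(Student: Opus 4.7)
The lower bound $\ex(n,F)\ge s(n)$ is immediate, since $S(n)$ is tripartite---hence $F_5$-free---and therefore $F$-free as $F\supseteq F_5$. For the matching upper bound, I would adapt the cleaning strategy of Section~\ref{sec::Pro} with $F$ in place of $\ffp$. Let $H$ be an $n$-vertex $F$-free $3$-graph with $|H|\ge s(n)$. The containment $F\subseteq F_5[t]$ lets me combine the hypergraph removal lemma (Theorem~\ref{thm::HypRemLem}), the blow-up lemma (Lemma~\ref{lem::ManCopOfFThenBlo}), and the $F_5$-stability theorem (Theorem~\ref{thm::F5Sta})---exactly as in the proof of Theorem~\ref{thm::o3Sta}---to obtain a partition $\pi=(V_1,V_2,V_3)$ of $[n]$ maximising $|H_\pi|$, with $|H\sm H_\pi|\le\eps n^3$ and the usual consequences of Claim~\ref{cla::o3staP}: parts nearly balanced, few low-codegree crossing pairs, and small same-part degrees.

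Next, I would upgrade this to $|\bar{H}_\pi|=O(n^2)$ following Theorem~\ref{thm::UpperHpi}. The main ingredient that does not transfer verbatim is Lemma~\ref{lem::Qn29edges}, which exploits the explicit hyperedge set of $\ffp$; in its place I would use a supersaturation variant, observing that if many pairs $(x,x')$ have both large codegree and large $d_{S,T}$, the resulting ``double-crossing'' configurations aggregate into a blow-up of $F_5$ large enough to contain $F$ by Lemma~\ref{lem::ManCopOfFThenBlo}. The accounting of Claims~\ref{cla::QAsmall}--\ref{cla::sizeOfM} then still yields $|\bar{H}_\pi|=O(n^2)$. Setting up the typical/non-typical dichotomy of Section~\ref{subsec::typ}, Lemmas~\ref{lem::B} and~\ref{lem::A} transfer similarly, and after the minimum-degree reduction of Section~\ref{subsec::proMai} I may assume every vertex of $H$ is typical, so each $x\in V_i$ satisfies $d_{j,k}(x)\ge(\tfrac{1}{9}-o(1))n^2$ and $d_{i,i}(x)=o(n^2)$.

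The decisive step is to show that $H$ contains no non-crossing hyperedge. Suppose for contradiction that some $e=\{u_1,u_2,u_3\}\in H$ is non-crossing; the aim is to embed a copy of $S^+(m)$ in $H$ using $e$ as the unique non-crossing hyperedge $\{v,x_1,x_2\}$, which, since $F\subseteq S^+(m)$, yields $F\subseteq H$ and contradicts $F$-freeness. Because $e$ is non-crossing, at least two of its vertices share a part of $\pi$; say $u_1,u_2\in V_1$. Map $x_1\mapsto u_1$, $x_2\mapsto u_2$, and the apex $v\mapsto u_3$ (wherever $u_3$ lies in $\pi$); the three parts of $S^+(m)$'s tripartite structure are sent into the corresponding parts of $\pi$ (with $x_1,x_2$ placed in $V_1$). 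Typicality of $u_1,u_2,u_3$ together with $|\bar{H}_\pi|=O(n^2)$ implies that each $u_j$ is missing from only $O(n)$ crossing pairs of $\pi$, so a short greedy selection inside each part places the remaining $m-3$ vertices of $S^+(m)$ and completes the embedding. Once no non-crossing edge survives, $|H|=|H_\pi|\le|V_1||V_2||V_3|\le s(n)$, with equality forcing $\pi$ to be balanced and $H=S(n)$.

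The main obstacle is the adaptation of Lemma~\ref{lem::Qn29edges} sketched above: the $\ffp$-proof exploits a single codegree-$(t+1)$ pair to witness an $\ffp$-copy, whereas for a general $F\in\mH^t$ the same local configuration may not embed $F$, and one instead needs a blow-up/supersaturation step aggregating many such pairs. Once this modified ingredient is in place, the typical/non-typical framework of Section~\ref{sec::Pro} transfers cleanly, and the final step---ruling out a lone non-crossing edge via $S^+(m)$---is a short greedy embedding. This is where the hypothesis $F\subseteq S^+(m)$ (stronger than the natural $F\subseteq S'(m),\,S''(m)$) plays its role: it ensures that a single non-crossing edge of $H$, together with the surrounding tripartite bulk, already suffices to contain a copy of $F$.
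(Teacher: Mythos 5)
The paper omits its own proof, so I can only judge the proposal against the claim and the toolkit the paper develops. Your overall skeleton---stability, a cleaning step, and a final greedy embedding of $S^+(m)$ anchored at a non-crossing edge---is the right shape and does match the paper's hint (``a cleaning method similar to the one in Section~\ref{sec::Pro}''). The lower bound and the closing embedding step are correct, and you are right that this is exactly where the hypothesis $F\subseteq S^+(m)$ earns its keep.

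There are, however, two genuine gaps in the middle. First, your replacement of Lemma~\ref{lem::Qn29edges} is not viable as stated. A single pair $(x,x')$ with $d(x,x')\ge t+1$ and both $d_{S,T}(x),d_{S,T}(x')$ large yields only $O(n^2)$ copies of $F_5$ (indexed by the common crossing pairs $(y,z)$ together with $v\in L(x,x')$), far below the $\Omega(n^5)$ threshold that Lemma~\ref{lem::ManCopOfFThenBlo} needs; aggregating over many such pairs would require reorganising the whole count rather than being a drop-in ``supersaturation variant.'' Second, you then assert that ``Lemmas~\ref{lem::B} and~\ref{lem::A} transfer similarly'' and that after the minimum-degree reduction every vertex of $H$ is typical. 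But Theorems~\ref{thm::UpperHpi} and~\ref{thm::Bempty}, which deliver exactly those conclusions, are proved under the hypothesis $|H|\ge\sHo$, and $\sHo-s(n)=\Theta(tn^2)$. Here you only have $|H|>s(n)$, a strictly weaker input for $t\ge1$, so the transfer is not automatic and would need an independent argument. (Minor: the remark that each $u_j$ misses ``only $O(n)$ crossing pairs'' should be $O(n^2)$; this does not affect the greedy step.)

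Both gaps can in fact be sidestepped: once one knows that a non-crossing edge $\{u,y,y'\}$ with $y,y'\in V_a$ both typical forces a copy of $S^+(m)$, $F$-freeness implies that for every $u$ and every $a$ the bad vertices $B\cap V_a$ form a vertex cover of $L_{a,a}(u)$, whence $d_{a,a}(u)\le|B|\,n$. Combined with the stability bound $|\bar H_\pi|<|H\sm H_\pi|\le\eps n^3$ (which controls $|B|$) and the minimum-degree assumption, this bootstrap shows $B=\es$ and hence $H\sm H_\pi=\es$, contradicting $|H|>s(n)\ge|H_\pi|$. This avoids any analogue of Lemma~\ref{lem::Qn29edges} or Theorem~\ref{thm::UpperHpi} altogether and makes the proof considerably shorter than what you propose.
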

However, this is not a complete answer to Question~\ref{que::last}, due to the following example. Let $F_5'$ be the $3$-graph on vertex set $\{1,2,3,3',4,4',5\}$ with hyperedges $\{123,124,345, 3'4'5\}$, see Figure~\ref{fig::F'5}. 

\begin{figure}[H]

\tikzstyle{every node}=[circle, draw, fill=black!80, inner sep=0pt, minimum width=2.4pt]

    \begin{minipage}{0.48\textwidth}
	\centering
	\begin{tikzpicture}[scale = 1.4]
	    \draw[color=white, opacity = .0, use as bounding box] (0,-1.3) rectangle (2,1.5);
	
	    \node (x) at (1, 0.7) [label=right:] {};  
	    \node (y) at (0.25, -0.6) [label=below:] {};
	    \node (z) at (1.75, -0.6) [label=below:] {};
	    
	    \node (x1) at (0.6, 1) [label=right:$x_1$] {};
	    \node (x2) at (0.6, 0.8)  [label=right:$x_2$] {};
	    \node (v) at (0, 0.9)  [label=left:$v$] {};

	    \node at (1.9, 0.8) [fill=black!0,draw=black!0] {$V_1$};
	    \node at (-0.2, 0.2) [fill=black!0,draw=black!0] {$V_2$};
	    \node at (2.2, 0.2) [fill=black!0,draw=black!0] {$V_3$};
	
		\draw [very thick] (1,0.8) ellipse (0.7cm and 0.5cm);
		\draw [very thick] (0,-0.5) ellipse (0.7cm and 0.5cm);
		\draw [very thick] (2,-0.5) ellipse (0.7cm and 0.5cm);
		
        \draw[thick] (x) -- (y) -- (z) -- (x);
        \draw[thick] (x1) -- (x2) -- (v) -- (x1);
		
    \end{tikzpicture}
    \caption{Hypergraph $S^+(m+1)$.} \label{fig::Sp}
    \end{minipage}
	\begin{minipage}{0.48\textwidth}
	\centering
	\begin{tikzpicture}[scale = 1.4]
		\draw[color=white, opacity = .0, use as bounding box] (0,-1.3) rectangle (2,1.5);
		\node (1) at (0, 0.8)  [label=right:$1$] {};  
		\node (2) at (1, 0.8)  [label=right:$2$] {};
		\node (3) at (2, 0.8)  [label=right:$3$] {};
		\node (4) at (2, 0)  [label=right:$4$] {};
		\node (5) at (2, -0.8) [label=right:$5$] {};
		\node (3p) at (0, -0.8)  [label=right:$3'$] {};
		\node (4p) at (1, -0.8)  [label=right:$4'$] {};
		
		\begin{pgfonlayer}{bg}
            \draw[thick] \hedgeiii{1}{2}{3}{3mm};
		    \draw[thick] \hedgeiii{1}{2}{4}{2.6mm};
		    \draw[thick] \hedgeiii{3}{4}{5}{3mm};
		    \draw[thick] \hedgeiii{3p}{4p}{5}{2.7mm};
        \end{pgfonlayer}
    \end{tikzpicture}
    \caption{Hypergraph $F'_5$.} \label{fig::F'5}
    \end{minipage}
\end{figure}

\begin{claim}
We have $F_5 \subseteq F_5' \subseteq F_5[2]$, $F_5'$ is not a subhypergraph of $S^+(m)$ for any $m \ge 1$, and $\ex(n,F_5') = s(n)$ for every sufficiently large $n$. 
\end{claim}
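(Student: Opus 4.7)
The assertion $F_5 \subseteq F_5' \subseteq F_5[2]$ is a direct check from the definitions: the three edges of $F_5$ appear verbatim in $F_5'$, and to embed $F_5' \subseteq F_5[2]$ we send $1,2,3,4,5$ to one chosen copy of each vertex of $F_5$ and send $3',4'$ to the second copies of $3$ and $4$ respectively; all four edges of $F_5'$ then land in $F_5[2]$.

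For $F_5' \not\subseteq S^+(m)$: the hypergraph $S^+(m)$ has exactly one non-crossing edge with respect to its tripartition, namely the special edge through the distinguished vertex $v$. The plan is to observe first that not every edge of $F_5'$ can be crossing in any potential embedding, because $\{1,2,3\}$ and $\{1,2,4\}$ both crossing would force $3$ and $4$ into a common part, contradicting $\{3,4,5\}$ crossing. Hence exactly one edge of $F_5'$ plays the role of the special edge of $S^+(m)$. A short case analysis over the four possibilities (together with a choice of which vertex of that edge corresponds to $v$) then derives a contradiction in each case, by tracking the part memberships forced by the crossing requirements on the remaining three edges. For instance, if the special edge is $\{3',4',5\}$ with $v=3'$, then $4',5 \in V_1$, which via $\{3,4,5\}$ crossing forces $3 \in V_2, 4 \in V_3$ (up to relabelling), after which the crossing constraints on $\{1,2,3\}$ and $\{1,2,4\}$ become incompatible.

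For $\ex(n,F_5') = s(n)$, the lower bound is immediate from $F_5 \subseteq F_5'$ and $\ex(n,F_5) = s(n)$. For the matching upper bound, my plan is to mirror the structure of Sections~\ref{sec::Pre} and~\ref{sec::Pro}. Since $F_5 \subseteq F_5' \subseteq F_5[2]$, a stability result for $F_5'$ follows from Theorem~\ref{thm::F5Sta}, Lemma~\ref{lem::ManCopOfFThenBlo}, and Theorem~\ref{thm::HypRemLem} by the same chain of implications as in the proof of Theorem~\ref{thm::o3Sta}: any $F_5'$-free $H$ with $|H| \ge (\tfrac{1}{27}-\delta)n^3$ admits a partition $\pi=(V_1,V_2,V_3)$ with $|H \sm H_\pi| \le \eps n^3$. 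The sequential vertex-deletion of Section~\ref{subsec::proMai} then reduces to the case $\delta(H) \ge \tfrac{1}{9}n^2-n$; this step depends only on the growth of $s(n)$ and the existence of stability, so it applies verbatim. Analogues of Claim~\ref{cla::o3staP} and Theorem~\ref{thm::UpperHpi} should then give $|\bar{H}_\pi| = o(n^2)$ together with a per-vertex upper bound on missing crossing edges (say, $d_{j,k}(x) \ge |V_j||V_k| - o(n^2)$ for all but $o(n)$ vertices $x \in V_i$).

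The core step is then to show that $H$ contains no non-crossing edge. Given a non-crossing edge $e=\{a,b,c\}$, I would embed $F_5'$ in $H$ as follows. If $e \subseteq V_i$, assign $\{3,4,5\}=\{b,c,a\}$ onto $e$, then pick $\{1,2\} \subseteq V_j \times V_k$ with $\{1,2,b\},\{1,2,c\} \in H$ and a disjoint $\{3',4'\} \subseteq V_j \times V_k$ with $\{3',4',a\}\in H$. If $e$ has two vertices $a,b \in V_i$ and one vertex $c \in V_j$, set $5 = c$, $\{3,4\}=\{a,b\}$, and pick $\{1,2\} \in V_j \times V_k$ with both $\{1,2,a\},\{1,2,b\}\in H$ together with $\{3',4'\}\in V_i \times V_k$ with $\{3',4',c\} \in H$. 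In either case, the per-vertex missing-edge bounds guarantee $\Omega(n^2)$ realizations of each required pair, so a straightforward counting argument supplies valid disjoint pairs avoiding $e$, producing an $F_5'$ and the desired contradiction. Hence $|H| \le |V_1||V_2||V_3| \le s(n)$. The principal obstacle is obtaining these per-vertex missing-crossing-edge bounds in the third step without the codegree constraint that $F_2^t$-freeness provides in the paper's setting; this requires a cleaning argument tailored to the matching-in-link structure of $F_5'$, using that any vertex whose link contains a disjoint pair of edges, while participating in a pair of codegree $\ge 2$, already produces an $F_5'$.
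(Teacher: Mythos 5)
Parts 1 and 2 of your proposal are correct. For the containments, your check agrees with the paper's check-by-inspection. For $F_5'\not\subseteq S^+(m)$, your case analysis works; the paper packages it more cleanly: after one argues (as you essentially do) that in any embedding the vertex playing the role of $5$ must be the special vertex $v$ of $S^+(m)$ — because no assignment of the unique non-crossing edge can place $5$ inside the tripartite shell $S(m-1)$ — the conclusion is immediate, since $5$ has degree two in $F_5'$ while $v$ has degree one in $S^+(m)$.

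For part 3, your plan takes the wrong route and contains a gap you yourself flag. You propose to re-derive a stability theorem for $F_5'$ and then re-run the full cleaning argument of Section~\ref{sec::Pro}, and you correctly note at the end that the cleaning stage (per-vertex bounds on missing crossing edges, analogues of Claim~\ref{cla::o3staP}(iii) and Lemma~\ref{lem::A}) leans heavily on the codegree control that $\ftp$-freeness provides and would need a new tailored argument for $F_5'$; that new argument is not supplied, so this step does not go through as written. The paper avoids all of that machinery. After the vertex-deletion reduction to $\delta(H)\ge \frac19 n^2 - n$ — which you do carry out, and which is the one piece of Section~\ref{subsec::proMai} that genuinely transfers — the paper simply invokes Theorem~\ref{thm::Main} at $t=0$ (i.e.\ the already-established Frankl--F\"uredi / Keevash--Mubayi result $\ex(n,F_5)=s(n)$): any $H$ with $|H|>s(n)$ contains a copy of $F_5$, say with hyperedges $\{v_1v_2v_3, v_1v_2v_4, v_3v_4v_5\}$. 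The minimum-degree hypothesis gives at least $\frac19 n^2 - n - 4n > 0$ hyperedges through $v_5$ avoiding $v_1,\ldots,v_4$; any such hyperedge $e$ yields the copy $\{v_1v_2v_3, v_1v_2v_4, v_3v_4v_5, e\}$ of $F_5'$, a contradiction. The idea you missed is that the exact Tur\'an result for $F_5$ can be used as a black box, so that an $F_5$ found in a dense $H$ only needs to be \emph{extended} by one hyperedge to produce $F_5'$ — no fresh stability-plus-cleaning analysis is required.
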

\begin{proof}
The first claim holds by the definition of $F_5'$. 
Assume that $S^+(m)$ contains a copy of $F_5'$. Then, only the new vertex $v$ in $S^+(m)$ can be the vertex $5$ in $F_5'$. However, vertex $5$ has degree two in $F_5'$ while $v$ has degree one in $S^+(m)$, a contradiction.

Trivially, we have $\ex(n, F_5') \geq \ex(n, F_5) \geq s(n)$. For the other direction, using a similar argument as in Section~\ref{subsec::proMai}, we can assume that the extremal $F_5'$-free $3$-graph has minimum degree at least $\frac{1}{9}n^2-n$. For every $n$-vertex $3$-graph $H$ with $|H| > s(n)$, by Theorem~\ref{thm::Main}, $H$ contains a copy of $F_5$. Let this copy be $\{v_1v_2v_3, v_1v_2v_4, v_3v_4v_5\}$.
By the minimum degree assumption, $v_5$ is contained in a hyperedge $e$ that does contain any $v_i$ for $ i \in [4]$. Now, $\{v_1v_2v_3,v_1v_2v_4, v_3v_4v_5, e\}$ forms a copy of $F_5'$. Therefore, $\ex(n,F_5') = s(n)$ for every sufficiently large $n$.
\end{proof}

\paragraph*{Acknowledgments.} 
We are very grateful to
Xizhi Liu,
Dhruv Mubayi, and
Oleg Pikhurko
for many helpful comments on our work.
This project started as an IGL 2021 Summer program. The undergraduate students Junsheng Liu, Alexander Roe, Yuzhou Wang, and Zihan Zhou took part in the initial discussions. During Summer 2021, Alexander Roe was partially supported by RTG NSF grant DMS-1937241.

\bibliographystyle{abbrv}
\bibliography{expohyp}

\section*{Appendix}
\addtocounter{section}{1}
\setcounter{definition}{0}
In this appendix, we prove Proposition~\ref{pro::SteineralmostTuran3}.
The upper bound follows directly from the fact that an $\ftp$-free hypergraph has maximum codegree at most $t$ and the equality is achieved only for $(n,3,2,t)$-designs. Next, we prove the lower bounds. 

Let $G$ and $F$ be $r$-graphs. An $(F,t)$\emph{-design} of $G$ is a family $\mathcal{F}$ of distinct copies of $F$ in $G$ such that every hyperedge of $G$ is contained in exactly $t$ of these copies. The main tool for proving Proposition~\ref{pro::SteineralmostTuran3} is a result by Glock, K\"uhn, Lo, and Osthus~\cite{glock2016existence}, which gives sufficient conditions on $G$ to contain an $(F,t)$-design. Denote by $K_n$ the complete graph on $n$ vertices. Here, we will only state their result for the case we are interested in: $r=2$ and $F=K_3$. An $n$-vertex graph $G$ is $(c, h, p)$\emph{-typical} if for every set $A \subseteq V(G)$ with $|A| \leq h$, we have $(1-c)np^{|A|} \le |\bigcap_{v\in A} N(v)| \le (1 + c)np^{|A|}$, where $N(v)$ is the set of neighbors of $v$ in $G$. A graph $G$ is $(K_3,t)$\emph{-divisible} if $t|E(G)|$ is divisible by $3$ and $t |N(v)|$ is divisible by $2$ for every $v\in V(G)$.
\begin{theorem}[Glock, K\"uhn, Lo, and Osthus~\cite{glock2016existence}]
\label{GlockKuhndLoOsthusdesign}
For $h = 2812$ and every $c,p\in\left(0,1\right]$ with
$c\leq 0.9(p/2)^{h}/(36^{2}\cdot4^{36})$,
there exist $n_0$ and $\gamma > 0$ such that the following holds for all $n \geq n_0$.
 Let $t$ be a positive integer with $t \leq \gamma n$. Suppose that $G$ is a $(c, h, p)$-typical graph on $n$ vertices. Then, $G$ has a $(K_3,
t)$-design if it is $(K_3,t)$-divisible.
\end{theorem}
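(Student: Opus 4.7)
The upper bound $\ex(n,\ftp) \le \frac{t}{3}\binom{n}{2}$ is immediate from the codegree constraint: in any $\ftp$-free $3$-graph $H$, every pair has codegree at most $t$, so $3|H| = \sum_{\{x,y\}} d_H(x,y) \le t\binom{n}{2}$, with equality iff $H$ is an $(n,3,2,t)$-design.

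For the lower bound, the plan is to apply Theorem~\ref{GlockKuhndLoOsthusdesign} to graphs $G \subseteq K_n$ that have as many edges as possible and are both $(K_3,t)$-divisible and $(c,h,1)$-typical. Given such $G$, GKLO produces a family $\mathcal{F}$ of triangles in $G$ with every edge of $G$ in exactly $t$ of them, and the $3$-graph $H$ whose hyperedges are the vertex sets of the triangles is $\ftp$-free (codegree $t$ on edges of $G$, codegree $0$ on non-edges) with $|H| = t|E(G)|/3$ hyperedges. Typicality is automatic whenever $G$ differs from $K_n$ by $O(n)$ edges, since then $|\bigcap_{v \in A} N_G(v)| = n - O(h)$ for every $|A| \le h$, giving $c = O(h/n)$ which lies well below the GKLO threshold for $n$ large; the bound $t \le \gamma n$ is trivial for fixed $t$.

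The three cases correspond to the obstructions to $(K_3,t)$-divisibility of $K_n$. When $t$ is even, the parity condition $2 \mid t\deg_G(v)$ is automatic, so I would take $G = K_n$ minus at most two edges to force $3 \mid t|E(G)|$, losing at most $\frac{2t}{3}$ hyperedges. When $t$ is odd and $n$ is odd, $t(n-1)$ is even because $n-1$ is even, so $K_n$ already has the right parity; if $\binom{n}{2} \not\equiv 0 \pmod{3}$ (only when $n \equiv 2 \pmod{3}$), I would remove a short even cycle (such as $C_4$) to fix divisibility modulo $3$ while keeping all degrees even, losing at most $\frac{4t}{3} \le \frac{8t}{3}$ hyperedges.

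The most delicate case is $t$ odd and $n$ even. Here $t(n-1)$ is odd at every vertex, so every degree of $G$ must be reduced, and the simplest choice $G = K_n - M$ for a perfect matching $M$ gives $|H| = \frac{t}{3}\binom{n}{2} - \frac{tn}{6}$, meeting the stated bound $-\frac{n}{3}$ only for $t = 1$. For $t \ge 3$ I would instead combine two designs: a $(K_3, t-1)$-design $D_1$ of $K_n$ from GKLO with the even parameter $t-1$ (after at most $O(1)$ edge adjustments for divisibility), together with a $(K_3, 1)$-design $D_2$ of $K_n - M$ (a near-Steiner triple system) from a second application of GKLO. The union $H = D_1 \cup D_2$ has codegree $(t-1)+1 = t$ on every edge of $K_n - M$ and codegree $(t-1) + 0 = t-1$ on every edge of $M$, so $H$ is $\ftp$-free with $|H| = |D_1| + |D_2| - |D_1 \cap D_2| = \frac{t}{3}\binom{n}{2} - \frac{n}{6} - |D_1 \cap D_2|$. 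The hardest step is coordinating the two applications of GKLO so that $|D_1 \cap D_2|$ stays within the allowed slack $\frac{n}{3} - \frac{n}{6} = \frac{n}{6}$; I expect this can be arranged via the absorption flexibility inside the proof of GKLO (letting the second application avoid a prescribed set of $O(n^2)$ forbidden triples), or via a direct combinatorial construction combining an $(n,3,2,t-1)$-design with a maximum triangle packing of $K_n$ whose leave is a near-perfect matching. The residual mod-$3$ adjustments in each case, together with the overlap $|D_1 \cap D_2|$, account for the additive correction terms $-\frac{t^2}{6} - 3t$ in the stated bound.
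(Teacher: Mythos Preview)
Your proposal is not a proof of the stated theorem. Theorem~\ref{GlockKuhndLoOsthusdesign} is the Glock--K\"uhn--Lo--Osthus existence result, which the paper \emph{cites} as a black box and does not prove; what you have written is an argument for Proposition~\ref{pro::SteineralmostTuran3}, which \emph{applies} that theorem.

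Reading your proposal as an attempt at Proposition~\ref{pro::SteineralmostTuran3}: for $t$ even and for $t$ odd with $n$ odd, your approach coincides with the paper's Appendix (Lemmas~\ref{lem::almoststeiner1} and~\ref{lem::almoststeiner2}), which likewise removes at most two edges, respectively at most two vertex-disjoint copies of $C_4$, from $K_n$ to reach $(K_3,t)$-divisibility and then invokes Theorem~\ref{GlockKuhndLoOsthusdesign} once.

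For $t$ odd and $n$ even, however, your approach has a genuine gap. Overlaying a $(K_3,t-1)$-design $D_1$ of $K_n$ with a $(K_3,1)$-design $D_2$ of $K_n-M$ yields a $3$-graph with $|D_1|+|D_2|-|D_1\cap D_2|$ hyperedges, and nothing in the black-box statement of Theorem~\ref{GlockKuhndLoOsthusdesign} controls $|D_1\cap D_2|$; a priori this overlap can be $\Theta(n^2)$, swamping the $n/6$ slack you have to work with. Your suggested remedies---reaching into the absorption machinery of the GKLO proof, or an unspecified direct construction---are not proofs. The paper (Lemma~\ref{lem::almoststeiner3}) avoids the overlap issue by a different decomposition: partition $[n]$ into $s=\lfloor n/(t+3)\rfloor$ blocks $B_1,\dots,B_s$ of size $t+3$ plus an even leftover $J$, delete all intra-block edges and a perfect matching on $J$ to obtain a graph $G$ in which every degree is even, apply Theorem~\ref{GlockKuhndLoOsthusdesign} once to get a $(K_3,t)$-design of $G$ (after removing $0$, $4$, or $8$ further edges---cycles with one vertex in each of four distinct blocks---for divisibility by $3$), and then insert into each $B_k$ a fixed $(t+3)$-vertex $3$-graph of maximum codegree $\le t$ with at least $\tfrac{t}{3}\binom{t+3}{2}-\tfrac{t+3}{3}$ hyperedges (Claim~\ref{claim:3-graphsteiner2}). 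The two pieces are hyperedge-disjoint by construction, so no overlap term arises, and summing gives the bound $\tfrac{t}{3}\binom{n}{2}-\tfrac{n}{3}-\tfrac{t^2}{6}-3t$.
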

We are ready to prove the following lemmas corresponding to the lower bounds in Proposition~\ref{pro::SteineralmostTuran3}.

\begin{lemma}
\label{lem::almoststeiner1}
For every even $t \ge 1$ and sufficiently large $n$, we have
$
\ex(n,\ftp) \geq
\frac{t}{3}\binom{n}{2}-\frac{2t}{3}.
$
\end{lemma}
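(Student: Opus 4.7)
The plan is to construct an $F_2^t$-free $3$-graph from a $(K_3,t)$-design of $K_n$ (or of $K_n$ with one edge removed), invoking Theorem~\ref{GlockKuhndLoOsthusdesign}. The key observation is that if $\mathcal{F}$ is a $(K_3,t)$-design of a graph $G$, then the $3$-graph $H$ on $[n]$ whose hyperedge set is $\mathcal{F}$ has codegree exactly $t$ on every pair in $E(G)$ and codegree $0$ on every non-edge of $G$; in particular, $H$ is $F_2^t$-free with $|H| = \frac{t|E(G)|}{3}$.

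First I would verify the divisibility hypotheses for $K_n$. A graph $G$ is $(K_3,t)$-divisible iff $3 \mid t|E(G)|$ and $2 \mid t d_G(v)$ for every $v$. Since $t$ is even, the degree condition is automatic for $K_n$ (and remains so after removing an edge). For the other, a short modular computation shows $\binom{n}{2} \equiv 0 \pmod{3}$ when $n \equiv 0$ or $1 \pmod 3$ and $\binom{n}{2} \equiv 1 \pmod 3$ when $n \equiv 2 \pmod 3$. Hence $K_n$ itself is $(K_3,t)$-divisible except possibly when $n \equiv 2 \pmod 3$ and $3 \nmid t$. In the divisible case, $K_n$ is trivially $(c,h,1)$-typical for any fixed $c>0$ and sufficiently large $n$, so Theorem~\ref{GlockKuhndLoOsthusdesign} produces a $(K_3,t)$-design of size $\frac{t}{3}\binom{n}{2}$, yielding an $F_2^t$-free hypergraph with the desired count.

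In the remaining exceptional case I would delete a single arbitrary edge $e_0$ from $K_n$ to form $G := K_n - e_0$. Then $t|E(G)| = t\binom{n}{2} - t \equiv t - t = 0 \pmod 3$, restoring divisibility, and the degree condition still holds because $t$ is even. Moreover $G$ remains $(c,h,1)$-typical for large $n$, since for any $A \subseteq V(G)$ with $|A| \le h$ the common-neighborhood size $|\bigcap_{v\in A} N_G(v)|$ lies in $\{n-|A|,n-|A|-1,n-|A|-2\}$, still within $[(1-c)n,(1+c)n]$ once $cn \ge h+2$. Applying Theorem~\ref{GlockKuhndLoOsthusdesign} to $G$ produces a $(K_3,t)$-design of $G$ with $\frac{t}{3}(\binom{n}{2}-1) = \frac{t}{3}\binom{n}{2} - \frac{t}{3}$ triangles, which gives an $F_2^t$-free hypergraph exceeding the claimed bound $\frac{t}{3}\binom{n}{2} - \frac{2t}{3}$.

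The only mild obstacle is confirming that $G = K_n - e_0$ meets the specific typicality constant $c$ required by Theorem~\ref{GlockKuhndLoOsthusdesign}; this is routine since the deficit introduced by removing a single edge is $O(1)$, while $(1-c)n$ grows linearly in $n$. The rest of the proof is a direct modular arithmetic check plus one application of the design-existence theorem.
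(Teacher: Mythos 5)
Your proposal is correct and uses the same key tool as the paper (the Glock--K\"uhn--Lo--Osthus design theorem applied to $K_n$, possibly with a small edge set removed to fix $(K_3,t)$-divisibility). The only difference is a refinement: the paper considers $K_n$, $K_n$ minus one edge, and $K_n$ minus a $2$-edge matching, and observes that one of these three has edge count divisible by $3$; you instead compute $\binom{n}{2}\bmod 3$ explicitly and show that removing at most one edge always suffices, which gives a marginally better constant ($-\frac{t}{3}$ rather than $-\frac{2t}{3}$ in the worst case). This is a legitimate tightening, but the argument and its reliance on Theorem~\ref{GlockKuhndLoOsthusdesign} are otherwise identical.

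One tiny inaccuracy worth noting: for $G=K_n-e_0$ with $|A|\le h$, the common neighborhood has size $n-|A|$ or $n-|A|-1$ (never $n-|A|-2$); your claimed range $\{n-|A|,n-|A|-1,n-|A|-2\}$ is a harmless over-estimate and does not affect the typicality verification.
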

\begin{proof}
Let $G_1=K_n$, $G_2$ be the graph obtained from $K_n$ by removing an edge and $G_3$ be the graph obtained from $K_n$ by removing a matching containing two edges. Note that $G_1,G_2,G_3$ are all $(c,2812,1)$-typical for every $c\leq 0.9(1/2)^{2812}/(36^{2}\cdot 4^{36})$. We have
$
|E(G_1)|=|E(G_2)|+1=|E(G_3)|+2,
$
and hence there exists $i \in \{1,2,3\}$ such that $3$ divides $|E(G_i)|$. Therefore, $G_i$ is $(K_3,t)$-divisible. By Theorem~\ref{GlockKuhndLoOsthusdesign}, there exists a $(K_3,t)$-design of $G_i$. The corresponding hypergraph $H$,~i.e.,~the $n$-vertex $3$-graph where a triple is a hyperedge in $H$ if it is a $K_3$ in the $(K_3,t)$-design, satisfies that every pair of vertices has codegree at most $t$ and
$
    |H|\ge \frac{t}{3} \left(\b{n}{2} -2\right) =  \frac{t}{3}\binom{n}{2}-\frac{2t}{3}.
$
\end{proof}
\begin{lemma}
\label{lem::almoststeiner2}
For every odd $t\ge 1$ and sufficiently large odd $n$, we have
$
    \ex(n,\ftp)\geq
\frac{t}{3}\binom{n}{2}-\frac{8t}{3}.
$
\end{lemma}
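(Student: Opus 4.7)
The plan is to imitate the strategy of Lemma~\ref{lem::almoststeiner1} and apply Theorem~\ref{GlockKuhndLoOsthusdesign} with $r=2$ and $F=K_3$. The difference from the even-$t$ case is that $(K_3,t)$-divisibility is now more restrictive: the condition $2 \mid t\cdot |N(v)|$ forces every vertex of the chosen graph $G$ to have even degree, while (when $3 \nmid t$) the condition $3 \mid t\cdot |E(G)|$ still requires $3 \mid |E(G)|$. Because $n$ is odd, every vertex of $K_n$ has even degree $n-1$, so removing any $2$-regular subgraph preserves the parity condition.

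My plan is therefore to define three candidate subgraphs $G_1 := K_n$, $G_2 := K_n - C$ where $C$ is a $4$-cycle, and $G_3 := K_n - (C' \cup C'')$ where $C', C''$ are two vertex-disjoint $4$-cycles (which exist once $n \geq 8$). Then $|E(G_i)| = \binom{n}{2} - 4(i-1)$ for $i = 1, 2, 3$, and these three values cover all residues modulo $3$; moreover every vertex has even degree in each $G_i$ because a $4$-cycle is $2$-regular. Consequently, when $3 \mid t$ all three graphs are $(K_3,t)$-divisible, and when $3 \nmid t$ exactly one of them is. In either case we can select an index $i \in \{1,2,3\}$ for which $G_i$ is $(K_3,t)$-divisible.

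Each $G_i$ differs from $K_n$ in only a bounded number of edges, so each is $(c, 2812, 1)$-typical (with $p=1$) for every fixed $c > 0$ once $n$ is sufficiently large: any common neighborhood in $G_i$ equals the corresponding common neighborhood in $K_n$ up to an additive constant, so the required two-sided bound $(1-c)n \leq n - |A| - O(1) \leq (1+c)n$ is easily met. Theorem~\ref{GlockKuhndLoOsthusdesign} then yields a $(K_3,t)$-design $\mathcal{F}$ of $G_i$. Taking the $3$-graph $H$ whose hyperedges are the triangles of $\mathcal{F}$, each pair $\{u,v\} \in E(G_i)$ has codegree exactly $t$ in $H$, while each non-edge of $G_i$ has codegree $0$ (a triangle of $G_i$ through both endpoints would require the missing edge). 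Hence $H$ is $F_2^t$-free with
\[
|H| \;=\; \frac{t\,|E(G_i)|}{3} \;\geq\; \frac{t}{3}\!\left(\binom{n}{2}-8\right) \;=\; \frac{t}{3}\binom{n}{2} - \frac{8t}{3}.
\]

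There is essentially no serious obstacle here: the only thing one needs to check is that the bounded perturbation leading from $K_n$ to $G_2$ or $G_3$ does not violate the numerical typicality hypothesis of Theorem~\ref{GlockKuhndLoOsthusdesign}, which is immediate because the removed subgraph has size independent of $n$. The divisibility casework is then just a matter of observing that three consecutive multiples of $4$ cover all residues modulo $3$.
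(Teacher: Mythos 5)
Your proof is correct and follows exactly the same approach as the paper: you take $G_1=K_n$, $G_2=K_n$ minus a $4$-cycle, and $G_3=K_n$ minus two vertex-disjoint $4$-cycles, observe that $n$ odd keeps all degrees even so one of the three is $(K_3,t)$-divisible, and apply Theorem~\ref{GlockKuhndLoOsthusdesign}. Your observation that the codegree of an edge of $G_i$ is exactly $t$ while a non-edge has codegree $0$ is a slightly sharper formulation than the paper's ``at most $t$,'' but otherwise the argument is identical.
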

\begin{proof}
Denote by $C_4$ the cycle with $4$ vertices. Let $G_1=K_n$, $G_2$ be the graph obtained from $K_n$ by removing $4$ edges that form a copy of $C_4$ and $G_3$ be the graph obtained from $K_n$ by removing $8$ edges that form two vertex-disjoint copies of $C_4$. Note that $G_1,G_2,G_3$ are all $(c,2812,1)$-typical for every $c\leq 0.9(1/2)^{2812}/(36^{2}\cdot 4^{36})$ and every vertex in each of these three graphs has an even degree. We have
$
|E(G_1)|=|E(G_2)|+4=|E(G_3)|+8,
$
and hence there exists $i\in \{1,2,3\}$ such that $3$ divides $|E(G_i)|$. Therefore, $G_i$ is $(K_3,t)$-divisible. By Theorem~\ref{GlockKuhndLoOsthusdesign}, there exists a $(K_3,t)$-design of $G_i$. The corresponding hypergraph $H$ satisfies that every pair of vertices has codegree at most $t$ and
$
    |H| \ge \frac{t}{3} \left(\b{n}{2} - 8\right) = \frac{t}{3}\binom{n}{2}-\frac{8t}{3}.
$
\end{proof}

\begin{claim}
\label{claim:3-graphsteiner1}
For every even $n \ge 4$, there exists an $n$-vertex $3$-graph $H$ with $d(x,y)\geq 1$ for every $x,y\in V(H)$ and $|H|\leq \frac{1}{3}\binom{n}{2}+\frac{n}{3}$.
\end{claim}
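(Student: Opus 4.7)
The strategy is to exploit that the target bound $\frac{1}{3}\binom{n}{2} + \frac{n}{3} = \frac{n(n+1)}{6}$ is exactly the size of a Steiner triple system on $n+1$ vertices, and to split into two cases based on $n \pmod 6$, invoking Theorem~\ref{designdehon} with $t=1$ (which yields an $(m,3,2,1)$-design whenever $m \equiv 1, 3 \pmod 6$).

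For $n \equiv 0$ or $2 \pmod 6$, so that $n+1 \equiv 1$ or $3 \pmod 6$, I would take a Steiner triple system $\mathcal{S}$ on $[n] \cup \{v\}$. The $n/2$ triples of $\mathcal{S}$ through $v$ have the form $\{v, x_i, y_i\}$, and the pairs $\{x_i, y_i\}$ form a perfect matching $M$ on $[n]$. Delete $v$ and replace each deleted triple by $\{x_i, y_i, z_i\}$ for an arbitrary $z_i \in [n] \setminus \{x_i, y_i\}$. The resulting $3$-graph has at most $|\mathcal{S}| = \frac{n(n+1)}{6}$ hyperedges and covers every pair: pairs not in $M$ remain covered by their original $\mathcal{S}$-triple (which avoids $v$), and matching pairs are covered by the replacements.

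For $n \equiv 4 \pmod 6$, so that $n - 1 \equiv 3 \pmod 6$, I would fix a vertex $v$ and take a Steiner triple system $\mathcal{S}'$ on $[n] \setminus \{v\} = \{x_1, \dots, x_{n-1}\}$, then add $n/2$ triples through $v$ chosen to cover all $n-1$ pairs of the form $\{v, x_j\}$. Since $n-1$ is odd, one vertex must appear in two through-$v$ triples, which is easily arranged, for instance by adding $\{v, x_1, x_2\}, \{v, x_3, x_4\}, \ldots, \{v, x_{n-3}, x_{n-2}\}, \{v, x_{n-1}, x_1\}$. The resulting hypergraph has
\[
\frac{(n-1)(n-2)}{6} + \frac{n}{2} = \frac{n^2+2}{6} \le \frac{n(n+1)}{6}
\]
hyperedges and covers every pair.

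No serious obstacle arises: the whole argument is a careful packaging of Steiner triple systems supplied by Theorem~\ref{designdehon}, the only mild subtlety being the parity mismatch in the second case, which forces one vertex of $[n] \setminus \{v\}$ to participate in two through-$v$ triples.
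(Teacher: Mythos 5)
Your proof is correct and takes essentially the same approach as the paper: both rely on Theorem~\ref{designdehon} with $t=1$ to obtain a Steiner triple system on $n-1$ or $n+1$ vertices (you just make the mod~6 casework explicit, whereas the paper states the dichotomy directly), then either add a vertex and a near-matching of through-triples, or delete a vertex and redirect its triples. The constructions and the final counting are the same, so this is a faithful reconstruction of the paper's argument.
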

\begin{proof}
Since $n$ is even, by Theorem~\ref{designdehon}, there exists an $(n-1,3,2,1)$-design or an $(n+1,3,2,1)$-design. In the first case, take an $(n-1,3,2,1)$-design on $[n-1]$, and add one vertex $n$ and hyperedges $\{2i-1,2i,n\}$ for $i=1,\ldots ,n/2-1$ and hyperedge $\{1,n-1,n\}$. The resulting hypergraph $H$ has the desired properties: $d(x,y)\ge 1$ for every $x,y\in [n]$ and
\begin{align*}
    |H|= \frac{1}{3}\binom{n-1}{2}+\frac{n}{2}
    =\frac{1}{3}\binom{n}{2}+\frac{n}{6}+\frac{1}{3}
    \leq \frac{1}{3}\binom{n}{2}+\frac{n}{3}.
\end{align*}
Now, suppose there exists an $(n+1,3,2,1)$-design on $[n+1]$. Let $\{v_iu_i(n+1)\}$ be the hyperedges containing vertex $n+1$. 
Remove vertex $n+1$, and then add hyperedge $v_iu_iz_i$, where $z_i$ is an arbitrary vertex, for every $i$. 
The resulting hypergraph $H$ has the desired properties: $d(x,y) \ge 1$ for every $x,y \in [n]$ and
\bdm
    |H| \le \frac{1}{3}\binom{n+1}{2}=\frac{1}{3}\binom{n}{2}+\frac{n}{3}. \qedhere
\edm
\end{proof}
\begin{claim}
\label{claim:3-graphsteiner2}
For every odd $t\geq 1$, there exists a $(t+3)$-vertex $3$-graph $H$ with $|H|\geq \frac{t}{3}\binom{t+3}{2}-\frac{t+3}{3}$ such that each pair of vertices has codegree at most $t$.
\end{claim}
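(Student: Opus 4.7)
The plan is to take the complement (inside the complete $3$-graph on $t+3$ vertices) of the near-optimal pair-cover produced by Claim~\ref{claim:3-graphsteiner1}. Set $n=t+3$; since $t$ is odd and $t\ge 1$, $n$ is even and $n\ge 4$, so Claim~\ref{claim:3-graphsteiner1} applies and yields an $n$-vertex $3$-graph $G$ in which every pair of distinct vertices has codegree at least $1$ and $|G|\le \frac{1}{3}\binom{n}{2}+\frac{n}{3}$. I would then define $H$ on the same vertex set by $H\coloneqq \binom{[n]}{3}\setminus G$.

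The codegree bound is immediate: in the complete $3$-graph $K_n^{(3)}$ every pair of distinct vertices has codegree exactly $n-2=t+1$, and removing a hypergraph in which each pair has codegree $\ge 1$ brings the codegree in $H$ down to at most $t$. For the lower bound on $|H|$, I would compute
\[
|H| \;=\; \binom{t+3}{3}-|G|\;\ge\; \binom{t+3}{3}-\tfrac{1}{3}\binom{t+3}{2}-\tfrac{t+3}{3}
\;=\;\tfrac{(t+3)(t+2)}{6}\bigl((t+1)-1\bigr)-\tfrac{t+3}{3}
\;=\;\tfrac{t}{3}\binom{t+3}{2}-\tfrac{t+3}{3},
\]
which is exactly the desired bound.

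There is no real obstacle here: the only content is the observation that a small $3$-graph covering all pairs is precisely what one must delete from $K_n^{(3)}$ to drop every codegree from $t+1$ to $t$, and Claim~\ref{claim:3-graphsteiner1} supplies such a cover of essentially optimal size for the relevant (even) parity of $n=t+3$. So the proof reduces to one application of Claim~\ref{claim:3-graphsteiner1} plus the arithmetic identity above.
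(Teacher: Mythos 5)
Your proposal is correct and takes exactly the same route as the paper: form the complement in $\binom{[t+3]}{3}$ of the small pair-covering $3$-graph supplied by Claim~\ref{claim:3-graphsteiner1} (using that $t+3$ is even when $t$ is odd), which drops every codegree from $t+1$ to at most $t$ and gives the stated edge count by the same arithmetic identity.
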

\begin{proof}
Take a copy of the complete $3$-graph on $t+3$ vertices and remove the hyperedge set of a $(t+3)$-vertex $3$-graph $H'$, where $|H'|\le \frac{1}{3}\binom{t+3}{2}+\frac{t+3}{3}$ and $d(x,y) \geq 1$ for every pair of vertices $x,y$ in $H'$. Such a hypergraph exists by Claim~\ref{claim:3-graphsteiner1}. The resulting hypergraph $H$ has the desired properties: every pair of vertices has codegree at most $t$ and
\begin{equation*}
    |H|\ge \binom{t+3}{3}-\left(\frac{1}{3}\binom{t+3}{2}+\frac{t+3}{3}\right)= \frac{t}{3}\binom{t+3}{2}-\frac{t+3}{3}.
    \qedhere
\end{equation*}
\end{proof}

\begin{lemma}
\label{lem::almoststeiner3}
For odd $t \ge 1$ and sufficiently large even $n$, we have
$
    \ex(n,\ftp)\geq
\frac{t}{3}\binom{n}{2}-\frac{n}{3}-\frac{t^2}{6} - 3t.
$
\end{lemma}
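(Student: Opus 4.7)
The plan is to construct the desired $\ftp$-free 3-graph $H$ on $[n]$ as a triple-disjoint union $H = H_1 \cup H_2$, where $H_1$ has maximum codegree at most $t-1$ and $H_2$ has maximum codegree at most $1$, so that the union automatically has codegree at most $t$ on every pair and is therefore $\ftp$-free. The key point is that the parity obstruction responsible for the $O(n)$ loss is concentrated entirely in $H_2$.

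For $t \ge 3$, I apply Lemma~\ref{lem::almoststeiner1} with the even integer $t - 1$ to obtain a 3-graph $H_1$ on $[n]$ with $d_{H_1}(x,y) \le t - 1$ for every pair and $|H_1| \ge \tfrac{t-1}{3}\binom{n}{2} - \tfrac{2(t-1)}{3}$; for the base case $t=1$ I set $H_1 = \varnothing$. By inspection of that lemma's proof, $H_1$ is realized as an exact $(K_3, t-1)$-design of $K_n - E^*$ for some $|E^*| \le 2$, produced by Theorem~\ref{GlockKuhndLoOsthusdesign}. For $H_2$, since $n$ is even I take a perfect matching $M$ of $K_n$ and remove a further set $F \subseteq E(K_n) \setminus M$ of at most $8$ edges (forming up to two vertex-disjoint copies of $C_4$, exactly as in the proof of Lemma~\ref{lem::almoststeiner2}) so that $G_2 := K_n - M - F$ has all even degrees and $3 \mid |E(G_2)|$, i.e.\ is $(K_3,1)$-divisible and $(c,h,1)$-typical. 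Applying Theorem~\ref{GlockKuhndLoOsthusdesign} to $G_2$ with multiplicity $1$ yields a $(K_3,1)$-design $H_2$ of $G_2$ with $|H_2| = \tfrac{1}{3}|E(G_2)| \ge \tfrac{1}{3}\binom{n}{2} - \tfrac{n}{6} - \tfrac{8}{3}$.

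To arrange $H_1 \cap H_2 = \varnothing$ while preserving both size bounds, I would construct $H_2$ first, and then apply Theorem~\ref{GlockKuhndLoOsthusdesign} to $K_n - E^*$ restricted to triangles outside $H_2$. Since $H_2$ has codegree at most $1$, each edge of $K_n - E^*$ loses at most one of its $n - 2 - O(1)$ available triangles, so the restricted triangle hypergraph remains typical enough for the theorem to produce the required $(K_3, t-1)$-design. Setting $H := H_1 \cup H_2$, every pair satisfies $d_H(x,y) = d_{H_1}(x,y) + d_{H_2}(x,y) \le (t - 1) + 1 = t$, so $H$ is $\ftp$-free, and
\[
|H| = |H_1| + |H_2| \ge \frac{t-1}{3}\binom{n}{2} - \frac{2(t-1)}{3} + \frac{1}{3}\binom{n}{2} - \frac{n}{6} - \frac{8}{3} = \frac{t}{3}\binom{n}{2} - \frac{n}{6} - O(t),
\]
which comfortably exceeds $\tfrac{t}{3}\binom{n}{2} - \tfrac{n}{3} - \tfrac{t^2}{6} - 3t$ for all sufficiently large $n$. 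The main obstacle is justifying this triple-disjointness cleanly: it requires either a mild extension of Theorem~\ref{GlockKuhndLoOsthusdesign} that accommodates a sparse set of excluded triangles, or an iterative/coordinated construction that alternates between building $H_1$ and $H_2$; once this is in place the rest of the argument is purely arithmetic.
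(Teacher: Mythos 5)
Your decomposition $H = H_1 \cup H_2$ is conceptually different from the paper's construction, and would in fact be sharper if it worked: a loss of roughly $n/6$ instead of $n/3$, which essentially matches the lower bound forced by the parity obstruction (for odd $t$ and even $n$, at least $n/2$ pairs must have codegree $\le t-1$, so at least $n/6$ hyperedges are lost relative to $\frac{t}{3}\binom{n}{2}$). The paper instead spreads the deficiency across $\lfloor n/(t+3)\rfloor$ blocks of size $t+3$: it removes all intra-block pairs from $K_n$ before applying Theorem~\ref{GlockKuhndLoOsthusdesign} once with multiplicity $t$, and then fills each block by Claim~\ref{claim:3-graphsteiner2}. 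That one-shot use of GKLO is the point: the paper never needs two designs to coexist, so disjointness is automatic.

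The gap you flag is genuine and not a mere technicality. Theorem~\ref{GlockKuhndLoOsthusdesign}, as stated, asserts only the existence of some $(K_3,s)$-design of a typical graph; it gives no control over which triangles appear, and in particular no way to run it a second time while avoiding the $\Theta(n^2)$ triangles of $H_2$. Without that, $|H_1\cap H_2|$ is uncontrolled, and a large overlap destroys the count (e.g.\ in the extreme case $H_2\subseteq H_1$ you only recover $|H_1|\approx\frac{t-1}{3}\binom{n}{2}$). The suggested fixes — an extension of GKLO with an excluded sparse triangle set, or an interleaved construction — would each require real work beyond what the quoted black box provides, so the argument as written does not close. If you want to preserve the layering idea, the cleanest route is probably to invoke the more general ``supercomplex'' machinery in Glock--K\"uhn--Lo--Osthus directly, but that changes the proof substantially; otherwise, the paper's block construction sidesteps the issue entirely at the cost of a worse (but still sufficient) constant.
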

\begin{proof}
Let $s= s(n):=\left\lfloor \frac{n}{t+3} \right\rfloor$. Partition $[n]$ into disjoint sets $B_1\cup\ldots \cup B_s \cup J$, where $|B_k|=t+3$ for each $k\in [s]$ and $0 \le |J| < t+3$. Note that $|B_k|$ and $|J|$ are both even. Assume $J=\{w_1,\ldots, w_{|J|}\}$. Define $G_1$ to be the $n$-vertex graph where $uv\not\in E(G_1)$ iff $u,v\in B_k$ for some $k\in[s]$ or $\{u,v\}=\{w_{2j-1}, w_{2j}\}$ for some $j\in [|J|/2]$. Let $G_2$ be the graph obtained from $G_1$ by removing $4$ edges that form a copy of $C_4$, every vertex of which comes from a different set $B_k$. Let $G_3$ be the graph obtained from $G_1$ by removing $8$ edges that form two vertex-disjoint copies of $C_4$, where every vertex in these two $C_4$'s comes from a different set $B_k$. Note that $G_1,G_2,G_3$ are all $(c,2812,1)$-typical for every $c\leq 0.9(1/2)^{2812}/(36^{2}\cdot 4^{36})$ and every vertex in the graphs $G_1, G_2$ and $G_3$ has an even degree. Further, we have that
$
|E(G_1)|=|E(G_2)|+4=|E(G_3)|+8,
$
and hence there exists $i\in\{1,2,3\}$ such that 3 divides $|E(G_i)|$. Therefore, $G_i$ is $(K_3,t)$-divisible. By Theorem~\ref{GlockKuhndLoOsthusdesign}, there exists a $(K_3,t)$-design of $G_i$. Let $H$ be the corresponding $3$-graph. Note that if $u,v\in B_k$ for some $k\in[s]$, then $uv \notin E(G_i)$, and hence $d(u,v) = 0$ in $H$. We have 
\bdm
|H| \ge \frac{t}{3} \left(\b{n}{2} - s\b{t+3}{2} - \frac{|J|}{2} - 8\right).
\edm

Now, we construct an $n$-vertex hypergraph $H'$ from $H$ by adding to every set $B_k$ a copy of a $(t+3)$-vertex $3$-graph with at least $\frac{t}{3}\binom{t+3}{2}-\frac{t+3}{3}$ hyperedges, each pair of vertices in which has codegree at most $t$. Such a hypergraph exists by Claim~\ref{claim:3-graphsteiner2}. Then, $H'$ satisfies that every pair of vertices has codegree at most $t$ and
\begin{align*}
    |H'| &\ge \frac{t}{3} \left(\b{n}{2} - s\b{t+3}{2} - \frac{|J|}{2} - 8\right) 
    +s \left(\frac{t}{3} \b{t+3}{2} - \frac{t+3}{3}\right) \\
    &=  \frac{t}{3}\binom{n}{2} -\frac{t|J|}{6}-\frac{8t}{3}-\frac{s(t+3)}{3}
    \geq \frac{t}{3}\binom{n}{2}-\frac{t^2}{6} - 3t-\frac{n}{3}.
    \qedhere
\end{align*}
\end{proof}

\end{document}